\documentclass[11pt]{article}

%%%%%%%%%%%%%%%%%%%%%%%%%%%%%%%%%%%%%%%%%%%%%%%%%%%%%%%%%%%%%%%%%%%%%%%%%%%%%%%%%%%%%%%%%
%%  Preamble                                                                           %%
%%%%%%%%%%%%%%%%%%%%%%%%%%%%%%%%%%%%%%%%%%%%%%%%%%%%%%%%%%%%%%%%%%%%%%%%%%%%%%%%%%%%%%%%%
\usepackage[margin=1in]{geometry}
\usepackage{amsmath,amsthm,amssymb,amsfonts,float}
\usepackage{subcaption}

% The basics

\makeatletter
\newcommand{\subjclass}[2][2010]{%
  \let\@oldtitle\@title%
  \gdef\@title{\@oldtitle\footnotetext{#1 \emph{Mathematics subject classification.} #2}}%
}
\makeatother

%% Load hyperref before autonum
%\usepackage[hidelinks]{hyperref}
\usepackage[colorlinks=true, pdfstartview=FitV, linkcolor=blue,citecolor=blue, urlcolor=blue]{hyperref}

\usepackage[abbrev,lite,nobysame]{amsrefs}
\usepackage{times}
\usepackage[usenames,dvipsnames]{color}
%% Autonumber only the references equations (may conflict with hyperref)
%\usepackage{autonum}

\usepackage{esint}

%% Markup packages ( Use \marginnote{} to leave margin notes),
%\usepackage{marginnote}

%% Fonts and symbols,
\usepackage{mathtools,enumitem,mathrsfs}

%Section Title size tweaks
\usepackage[compact]{titlesec}

%% Appendix Package
\usepackage[title]{appendix}

\usepackage{comment}

%\usepackage[notref,notcite,color]{showkeys}
%\mathtoolsset{showonlyrefs=true}
    
%% Author Affiliation Package
%\usepackage[affil-it,auth-lg]{authblk}

%% Macros
%%%%%%%%%%%%%%%%%%%%%%%%%%%%%%%%%%%%%%%%%%%%%%%%%%%%%%%%%%%%%
%% Macros                                                  %%
%%%%%%%%%%%%%%%%%%%%%%%%%%%%%%%%%%%%%%%%%%%%%%%%%%%%%%%%%%%%%

% General Shortcuts %

\newcommand{\eps}{\epsilon}

% Delimeter Environments %

\newcommand{\abs}[1]{\left| #1 \right|}
\newcommand{\set}[1]{\left\{ #1 \right\}}

% Spaces %

\renewcommand{\S}{\mathbb{S}}

% Bold-face %

% Integrals %
\newcommand{\dee}{\mathrm{d}}

% Operators %

% Probability/Measure Theory Specific %
\usepackage{bbm}

\newcommand{\EE}{\mathbf E}
\newcommand{\PP}{\mathbf P}

% Letter-indexed theorem environments

%Theorem environments
\newtheorem{theorem}{Theorem}[section]

\newtheorem{corollary}[theorem]{Corollary}
\newtheorem{lemma}[theorem]{Lemma}
\newtheorem*{lemma*}{Lemma}

\theoremstyle{definition}

\newtheorem{remark}[theorem]{Remark}
\newtheorem{example}[theorem]{Example}

%%%%%%%%%%%%%%%%%%%%%%%%%%%%%%%%%%%%%%%%%%%%%%%%%%%%%%%%%%%%%%%%%%%%%%%%%%%%%%%%%%%%%%%%%
%%  Begin Document                                                                     %%
%%%%%%%%%%%%%%%%%%%%%%%%%%%%%%%%%%%%%%%%%%%%%%%%%%%%%%%%%%%%%%%%%%%%%%%%%%%%%%%%%%%%%%%%%

\setcounter{secnumdepth}{3}
\numberwithin{equation}{section}
    
\begin{document}

\title{A quantitative dichotomy for Lyapunov exponents of non-dissipative SDEs with an application to electrodynamics} 
\subjclass{Primary: 37H15, 35H10. Secondary: 37D25, 58J65, 35B65}
\author{Jacob Bedrossian\thanks{\footnotesize Department of Mathematics, University of California, Los Angeles, CA 90095, USA \href{mailto:jacob@math.ucla.edu}{\texttt{jacob@math.ucla.edu}}. Both authors were supported by NSF Award DMS-2108633.} \and Chi-Hao Wu\thanks{\footnotesize Department of Mathematics, University of California, Los Angeles, CA 90095, USA \href{mailto:chwu93@math.ucla.edu}{\texttt{chwu93@math.ucla.edu}}.}}

\maketitle

\begin{abstract}
In this paper we derive a quantitative dichotomy for the top Lyapunov exponent of a class of non-dissipative SDEs on a compact manifold in the small noise limit. 
Specifically, we prove that in this class, either the Lyapunov exponent is zero for all noise strengths, or it is positive for all noise strengths and that the decay of the exponent in the small-noise limit cannot be faster than linear in the noise parameter.
As an application, we study the top Lyapunov exponent for the motion of a charged particle in randomly-fluctuating magnetic fields, which also involves an interesting geometric control problem. 
%Some notes regarding how generic one needs to check the H\"{o}rmander condition for hypoellipticity are also included. 
\end{abstract}

\setcounter{tocdepth}{1}
{\small\tableofcontents}

%% Introduction
\section{Introduction}\label{sec:Introduction}

The aim of this paper is twofold. First, we give a quantitative dichotomy for the (top) Lyapunov exponent of non-dissipative SDEs on a compact manifold with noise parameter $\eps > 0$. 
For this family of SDEs, the Lyapunov exponent is either zero for all $\eps > 0$, or strictly positive for all $\eps > 0$ and is bounded from below as $\lambda \gtrsim \eps$ as $\eps \to 0$.
Second, as an application of our result, we study conditions under which the motion of a charged particle in stochastically fluctuating magnetic fields has a positive top Lyapunov exponent.

Throughout, we will assume $(M,g)$ is an n-dimensional compact, connected, smooth Riemannian manifold, unless otherwise stated. For the metric on the tangent bundle $TM$, we choose the Sasaki metric induced by $g$.
Let $x_t\in M$ be an It\^o diffusion driven by smooth vector fields $X_0, X_1,\dotsc, X_r$ with a small noise parameter $\eps \geq 0$, 
\begin{equation}\label{bprocess}
    dx_t = X_0(x_t)\,dt + \sqrt{\eps} \sum_{k=1}^r X_k(x_t)\circ dW_t^k,
\end{equation}
where $\circ$ denotes the Stratonovich formulation and $\{W^k_t\}_{k=1}^r$ are i.i.d. one-dimensional Brownian motions defined on the canonical stochastic basis $(\Omega,\mathbf{P}, \mathcal{F},\mathcal{F}_t)$.
 It is a standard result that the smoothness of the vector field implies that the diffusion process generates a flow of diffeomorphisms almost surely \cite{K97}.
For each $\omega\in\Omega$, denote $\phi_{t,\omega}: M\rightarrow M$ as the flow of diffeomorphisms associated with the noise path $\omega$; i.e.
\begin{equation}
\phi_{t,\omega}(x) = x_t,
\end{equation}
where $x_t$ solves \eqref{pprocess}. 
Denote $D_x\phi_{t,\omega}$ as its linearization at $x\in M$.
If the Markov process $(x_t)$ has a unique stationary measure $\mu$ and a mild integrability condition is satisfied, then the Furstenberg-Kesten theorem \cite{FK60}
guarantees a unique top Lyapunov exponent $\lambda_1$ and a total Lyapunov exponent $\lambda_{\Sigma}$,
  such that for $\mu \times \PP$-a.e. $x$ and $\omega$, the following limits hold  
\begin{align*}
\lambda_1(\eps)  & = \lim_{t\rightarrow\infty} \frac{1}{t} \log \abs{D_x \phi_{t,\omega}} \\
\lambda_\Sigma(\eps)  & = \lim_{t\rightarrow\infty} \frac{1}{t} \log \abs{\rm{det} D_x \phi_{t,\omega}}. 
\end{align*} 
See Section \ref{sec:LEprelims} for more details. 

Denote $\mathbb{S}M$ as the unit tangent bundle, where each fiber is the unit sphere in the tangent space,
\begin{equation*}
\mathbb{S}_x M = \left\{ z \in T_xM: g_x(z, z) = 1 \right\}.
\end{equation*} 
For $z \in \mathbb{S}_{x}M$, denote
\begin{align*}
z_t = \frac{D_x \phi_{t,\omega}(x) z}{\abs{D_x \phi_{t,\omega}(x) z}}. 
\end{align*}
We define the \emph{projective process} on the sphere bundle $w_t = (x_t,z_t) \in \mathbb{S} M$, which is a standard tool in dynamical systems for studying Lyapunov exponents. 
The process $(w_t)$ is an It\^o diffusion solving the following SDE: 
\begin{equation}\label{pprocess}
    dw_t = \tilde{X}_0(w_t)\,dt + \sqrt{\eps}\sum_{k=1}^r\tilde{X}_k(w_t)\circ dW_t^k,
\end{equation}
where $\tilde{X}_k$'s are defined as
\begin{equation}\label{pfield}
    \tilde{X}_k(w) = \begin{pmatrix}
    X_k(x) \\  \left(1 - z \otimes z\right)\,\nabla_{x}X_k(x) z
    \end{pmatrix}.  %\quad\mbox{for}\quad k=0,1,\dotsc, r,
\end{equation}
%\begin{align}
%dw_t^{\epsilon} &= \tilde{X}_0(w_t^{\epsilon})\,dt + \sqrt{\epsilon}\sum_1^r \tilde{X}_k(w_t^{\epsilon})\circ dW_t^k,
%\end{align}
%where the the $\tilde{X}_j$ are lifts of the vector fields to the sphere bundle taking the form
%\begin{align*}
%\tilde{X} = \begin{pmatrix} X \\ (I - v \otimes v) \grad X v \end{pmatrix}.  
%\end{align*}
Throughout, we will assume that the projective process satisfies the parabolic H\"{o}rmander condition and is irreducible (see Section \ref{sec:LEprelims} for more details). 
\begin{enumerate}
\item[(H)] The collection $\{\widetilde{X}_0; \widetilde{X}_1, \dotsc, \widetilde{X}_r\}$ satisfies the parabolic H\"ormander condition in $\S M$; i.e. the Lie algebra generated by $\{[\tilde{X}_0, \tilde{X}_k], \tilde{X}_k\}_{k=1}^r$  spans the tangent space of $\S M$ at every point. 
\item[(I)] The process $w_t^{\epsilon}$ is irreducible in the sense that $\exists t>0$ such that  $\forall w\in \mathbb{S}M$ and all open sets $U\subseteq \mathbb{S}M$,
\begin{equation*}
P_t(w, U) >0.
\end{equation*}
\end{enumerate}
Note that hypoellipticity and irreducibility of $(w_t)$ imply the same for $(x_t)$. 
A consequence of (H) and (I) is that the stationary measures are unique and admit smooth, strictly positive densities with respect to the Lebesgue measure; we denote $\rho^{\epsilon}$ and $f^{\epsilon}$ for the stationary densites of $x_t^{\epsilon}$ and $w_t^{\epsilon}$ respectively.
Note that some authors take $\epsilon$ instead of $\sqrt{\epsilon}$ for the scaling of the strength of the noise; we will make corresponding adjustment when stating the related quantitative results. 

The small noise asymptotics of Lyapunov exponents have been studied in several previous works. For instance, \cite{APW86} studied a random oscillator and found $\lambda_1^{\epsilon} = O(\epsilon)$, \cite{BG02} studied the perturbation of a two-dimensional Hamiltonian system and found $\lambda_1^{\epsilon} = O(\epsilon^{1/3})$, and \cite{DNR11} studied the perturbation of a non-Hamiltonian system and found $\lambda_1^{\epsilon} = O(\epsilon)$. For more work in this direction, see \cite{AM82}, \cite{PW07}, \cite{BG01} and \cite{S00}. 
The work \cite{BBPS22} introduced a new method for estimating the Lyapunov exponents of SDEs based on regularity estimates of the projective density $f^\eps$ and used the method to prove the lower bound $\liminf_{\epsilon\rightarrow 0} \eps^{-1} \lambda = \infty$ for a range of SDEs, including Lorenz 96 and Galerkin truncation of the 2D Navier-Stokes equations \cite{BPS22}. 

To our knowledge, there is no example in the literature of an SDE with a Lyapunov exponent that behaves as $0 < \lambda \ll \eps$ as $\eps \to 0$. 
Our main theorem addresses whether or not this is possible for non-dissipative SDEs; specifically, our result excludes the possibility of superlinear decay as $\eps \to 0$ for the class of SDE \eqref{bprocess}.  

\begin{theorem}[Quantitative dichotomy]\label{maintheorem}
Let $(M,g)$, $(x_t)$, and $(w_t)$ be as above and additionally assume that the SDE is non-dissipative, i.e.
\begin{align*}
\lambda_\Sigma(\eps) = 0. 
\end{align*}
Assume $(H)$ and $(I)$ holds. 
For each $\epsilon>0$, denote the associated top Lyapunov exponent $\lambda_1^{\epsilon}$. Then exactly one of the following holds: 
\begin{enumerate}
    \item $\lambda_1^{\epsilon} > 0$ for all $\epsilon>0$ and there exists a constant $C>0$ such that $\liminf_{\epsilon\rightarrow 0} \frac{\lambda_1^{\epsilon}}{\epsilon} \geq C$;
    \item $\lambda_1^{\epsilon}=0$ for all $\epsilon>0$.
\end{enumerate}
\end{theorem}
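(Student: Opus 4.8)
The plan is to combine the Furstenberg--Khasminskii representation of $\lambda_1^\eps$ with the non-dissipativity constraint, the Oseledets theorem, and the hypoellipticity/irreducibility of the projective process, in three steps.

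\emph{Step 1: representation formula and the sign of $\lambda_1$.} Writing the Stratonovich linearised flow $v_t = D_x\phi_{t,\omega}(x)\,z$, the chain rule gives $d\log\abs{v_t} = q_0(w_t)\,dt + \sqrt\eps\sum_{k=1}^r q_k(w_t)\strat dW^k_t$, where $q_k(x,z)=\brak{z,\nabla_x X_k(x)z}$ is precisely the radial component projected away in \eqref{pfield}. Passing to It\^o form and applying the ergodic theorem for the stationary density $f^\eps$ (the martingale term having quadratic variation $O(\eps t)$ by compactness of $M$) yields $\lambda_1^\eps=\int_{\S M}Q^\eps f^\eps\,d\mathrm{vol}$ with $Q^\eps=q_0+\tfrac\eps2\sum_k\widetilde X_k q_k$; the same computation for the Jacobian determinant gives $\lambda_\Sigma^\eps=\int_M\bigl(\mathrm{div}_g X_0+\tfrac\eps2\sum_k X_k\,\mathrm{div}_g X_k\bigr)\rho^\eps\,dx$, which vanishes by hypothesis. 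Since $\mathrm{div}_g X_k$ is fibre-independent and $\rho^\eps$ is the $M$-marginal of $f^\eps$, subtracting $\tfrac1n\lambda_\Sigma^\eps=0$ replaces each $\nabla X_k$ by its $g$-traceless part $A_k=\nabla X_k-\tfrac1n(\mathrm{div}_g X_k)\Id$, so $\lambda_1^\eps=\int_{\S M}\widehat Q^\eps f^\eps$ with $\widehat Q^\eps=\widehat q_0+\tfrac\eps2\sum_k\widetilde X_k\widehat q_k$ and $\widehat q_k(x,z)=\brak{z,A_k(x)z}$. Finally, Oseledets together with $\lambda_\Sigma^\eps=0$ forces the full spectrum $\lambda_1^\eps\geq\cdots\geq\lambda_n^\eps$ to sum to zero, so $\lambda_1^\eps\geq0$ for every $\eps$ and $\lambda_1^\eps=0$ iff all $\lambda_i^\eps=0$; in particular the two alternatives are mutually exclusive, and it remains to prove (a) $Z:=\{\eps>0:\lambda_1^\eps=0\}$ is $\emptyset$ or all of $(0,\infty)$, and (b) if $Z=\emptyset$ then $\liminf_{\eps\to0}\lambda_1^\eps/\eps>0$.

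\emph{Step 2: the vanishing set is $\eps$-independent.} I would show that $\lambda_1^{\eps_0}=0$ for some $\eps_0>0$ is equivalent to the $\eps$-free condition that there is a smooth Riemannian metric $\widetilde g$ on $M$ making $X_0,X_1,\dots,X_r$ all conformal Killing fields. One direction is soft: if such $\widetilde g$ exists then every flow of the controlled field $X_0+\sum_k u_k X_k$ is $\widetilde g$-conformal, so $D_x\phi_{t,\omega}$ is pointwise a similarity and $\log\abs{v_t}_{\widetilde g}=\tfrac1n\log\abs{\det D_x\phi_{t,\omega}}_{\widetilde g}$ exactly, hence $\lambda_1^\eps=\tfrac1n\lambda_\Sigma^\eps=0$ for \emph{every} $\eps>0$. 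For the converse, $\lambda_1^{\eps_0}=0$ makes the trace-normalised cocycle $D_x\phi_{t,\omega}/\abs{\det D_x\phi_{t,\omega}}^{1/n}$ have vanishing Lyapunov spectrum; a Furstenberg-type rigidity argument — vanishing of the full spectrum, uniqueness of the stationary measure, and hypoellipticity of the $\S M$-process (which excludes residual polynomially-growing behaviour) — then produces a smooth invariant field of inner products on $TM$, i.e.\ a metric $\widetilde g$ for which $D_x\phi_{t,\omega}$ is $\P$-a.s.\ a similarity; the Stroock--Varadhan support theorem propagates this to every solution of the associated control problem, and differentiating in the controls gives $\mathcal L_{X_k}\widetilde g=\varphi_k\,\widetilde g$ for functions $\varphi_k$. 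Since this condition does not see $\eps$, (a) follows.

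\emph{Step 3: the quantitative lower bound.} Suppose $Z=\emptyset$, so no common conformal $\widetilde g$ exists, and suppose for contradiction $\lambda_1^{\eps_j}/\eps_j\to0$ along $\eps_j\to0$. Reorganising the It\^o correction exhibits $\widehat Q^\eps=\widehat q_0+\eps(\mathcal P+\mathcal R)$, where $\mathcal P(x,z)=\sum_k\bigl(\abs{A_k^{\mathrm{sym}}(x)z}^2-\brak{z,A_k^{\mathrm{sym}}(x)z}^2\bigr)\geq0$ on $\S_xM$ by Cauchy--Schwarz, vanishing exactly where $z$ is a common eigenvector of all $A_k^{\mathrm{sym}}(x)$, and $\mathcal R$ is bounded with no definite sign; thus $\lambda_1^\eps=\int_{\S M}\widehat q_0 f^\eps+\eps\int_{\S M}\mathcal P f^\eps+O(\eps)$. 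The crucial estimate to establish is that, for small $\eps$, the drift contribution $\int\widehat q_0 f^\eps$ and the indefinite term $\eps\int\mathcal R f^\eps$ are dominated by the fibre energy $\eps\int\mathcal P f^\eps$ up to $o(\eps)$ — concretely $\lambda_1^\eps\geq c\,\eps\int_{\S M}\mathcal P f^\eps-o(\eps)$ for a fixed $c>0$ — which one would obtain by testing the stationary Fokker--Planck equation $(\cL^\eps)^{\ast}f^\eps=0$ against correctors built from $\widehat q_0$ and using hypoelliptic regularity of $f^\eps$ in the fibre variables to convert the drift term into quantities controlled by the fibre energy. Granting this, $\lambda_1^{\eps_j}=o(\eps_j)$ forces $\int\mathcal P f^{\eps_j}\to0$; extracting a weak limit $f^{\eps_j}\rightharpoonup f^0$, a stationary measure for the deterministic $\widetilde X_0$-flow with $\int\mathcal P f^0=0$, and using a uniform-in-$\eps$ fibrewise regularity bound to prevent $f^{\eps_j}$ from collapsing onto a proper subbundle faster than (H) permits, one is forced to $\mathcal P\equiv0$, i.e.\ each $A_k^{\mathrm{sym}}\equiv0$ and each $X_k$ conformal Killing for $g$; a parallel treatment of the drift term then yields a common conformal structure, contradicting $Z=\emptyset$. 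I expect this last step to be the main obstacle: bounding $\int\widehat q_0 f^\eps$ by the fibre energy uniformly down to $\eps=0$ requires hypoelliptic estimates on $f^\eps$ that do not degenerate in the small-noise limit — the genuine analytic core — whereas Steps 1--2 are essentially structural.
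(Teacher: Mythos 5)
Your Step 3 contains the genuine gap, and you have located it yourself: the inequality $\lambda_1^\eps \ge c\,\eps\int_{\S M}\mathcal P f^\eps - o(\eps)$, which requires dominating the drift contribution $\int \widehat q_0 f^\eps$ by the fibre energy uniformly as $\eps\to 0$, is asserted but not proved, and "testing against correctors built from $\widehat q_0$" is not a proof. The missing ingredient is the Fisher information identity of \cite{BBPS22} (Theorem~\ref{preliminarythm1} here): for non-dissipative systems one has the \emph{exact} formula
\begin{align*}
\frac{n\,\lambda_1^\eps}{\eps} \;=\; \widetilde{FI}(f^\eps)\;=\;\frac12\sum_{k=1}^r\int_{\S M}\frac{\vert\widetilde X_k^* f^\eps\vert^2}{f^\eps}\,dw,
\end{align*}
in which the drift term and the It\^o correction have already been absorbed by integrating by parts against the stationary Fokker--Planck equation; there is no remainder to control. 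With this identity, $\lambda_1^{\eps_n}/\eps_n\to0$ immediately gives $\widetilde X_k^* f^{\eps_n}/\sqrt{f^{\eps_n}}\to 0$ in $L^2$; combined with the dichotomy of \cite{BBPS22} (Theorem~\ref{preliminarythm2}: either $\lambda_1^\eps/\eps\to\infty$, or $f^{\eps_n}\to f^0$ in $L^1$ for a stationary density $f^0$ of the zero-noise flow), one passes to the limit in the weak formulation to get $\widetilde X_k^* f^0=0$ for all $k=0,\dots,r$. Then $f^0$ solves the stationary Kolmogorov equation for \emph{every} noise strength, so unique ergodicity forces $f^0=f^1$ and $\widetilde{FI}(f^1)=0$, i.e.\ $\lambda_1^1=0$ --- a contradiction reached at $\eps=1$, with no small-noise hypoelliptic estimates needed. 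Your quantity $\mathcal P$ is not the Fisher information, and the non-collapse issues you flag at the end of Step 3 are resolved in the paper by the $L^1$ compactness supplied by Theorem~\ref{preliminarythm2}, not by uniform-in-$\eps$ fibrewise regularity.

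Your Step 2 is also much heavier than necessary, and as written is only a sketch. The equivalence of $\lambda_1^{\eps_0}=0$ with all $X_k$ being conformal Killing for some smooth $\widetilde g$ is essentially Baxendale's rigidity theorem; the hard direction (upgrading a measurable invariant conformal structure to a smooth one and differentiating the support-theorem conclusion in the controls) is a substantial result you do not establish. The paper's route is elementary given the identity above: $\lambda_1^{\eps_0}=0$ iff $\widetilde X_k^* f^{\eps_0}=0$ for $k=0,1,\dots,r$, which makes $f^{\eps_0}$ a stationary density for every $\eps$; unique ergodicity gives $f^\eps=f^{\eps_0}$, hence $\widetilde{FI}(f^\eps)=0$ and $\lambda_1^\eps=0$ for all $\eps>0$. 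Your Step 1 bookkeeping (nonnegativity of $\lambda_1^\eps$ from $\lambda_\Sigma^\eps=0$, mutual exclusivity of the alternatives) is fine but is not where the content of the theorem lies.
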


Another natural question is whether it is possible to introduce additional independent noise to a system with a positive Lyapunov exponent and reduce the Lyapunov exponent to zero. The following corollary is a result of the proof of Theorem \ref{maintheorem} (note that no smallness assumption is made).  
\begin{corollary} \label{cor:NewNoise}
Let $M$ be a connected, compact, Riemannian manifold as above. Suppose the diffusion process
\begin{equation*}
dx_t = X_0(x_t)\,dt + \sum_{k=1}^r X_k(x_t)\circ dW_t^k
\end{equation*}
satisfies the non-degeneracy conditions (H) and (I), and 
\begin{equation*}
dy_t =  X_0(y_t)\,dt + \sum_{k=1}^r X_k(y_t)\circ dW_t^k + X_{r+1}(y_t)\circ W_t^{r+1},
\end{equation*}
where $\{W_t^k\}_{k=1}^{r+1}$ are independent Brownian motions. Denote $\lambda_1(x)$, $\lambda_1(y)$, $\lambda_{\Sigma}(x)$ and $\lambda_{\Sigma}(y)$, the top Lyapunov exponents and the summations of the Lyapunov exponents for $x_t$ and $y_t$, respectively. Then under the assumption that $\lambda_{\Sigma}(x) = \lambda_{\Sigma}(y) = 0$, we have
\begin{equation*}
\lambda_1(x) > 0 \quad\Rightarrow\quad \lambda_1(y) >0.
\end{equation*}
\end{corollary}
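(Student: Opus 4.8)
\emph{Proof idea.} First I would record that the non-degeneracy assumptions transfer automatically from $x_t$ to $y_t$, so that $\lambda_1(y)$ and $\lambda_\Sigma(y)$ are well defined: the lifted fields $\widetilde X_1,\dots,\widetilde X_r$ on $\mathbb{S}M$ depend only on $X_1,\dots,X_r$ and are unchanged by adjoining $X_{r+1}$, so the Lie algebra generated by $\{[\widetilde X_0,\widetilde X_k],\widetilde X_k\}_{k=1}^{r+1}$ contains the one generated by $\{[\widetilde X_0,\widetilde X_k],\widetilde X_k\}_{k=1}^{r}$ and $(H)$ is inherited; likewise the control system of the $y$-projective process contains that of the $x$-projective process, so by the support theorem $(I)$ is inherited as well. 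Hence the $y$-projective process has a unique smooth positive stationary density and the Furstenberg--Kesten theorem applies.

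I would then argue by contradiction, assuming $\lambda_1(x)>0$ while $\lambda_1(y)=0$. Since the Lyapunov spectrum $\lambda_1(y)\ge\lambda_2(y)\ge\cdots\ge\lambda_n(y)$ sums to $\lambda_\Sigma(y)=0$ and its top element vanishes, every exponent of $y_t$ vanishes. This is precisely the degenerate regime treated in the proof of Theorem \ref{maintheorem}: when all Lyapunov exponents are zero, the almost surely invariant Riemannian metric supplied by Baxendale's theorem can, using the hypoellipticity and irreducibility of the projective process, be realized as a deterministic smooth Riemannian metric $\widetilde g$ on $M$ with respect to which the derivative cocycle $D\phi^{y}_{t,\omega}$ acts by isometries for $\mathbf P$-a.e.\ $\omega$. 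Differentiating $t\mapsto|D\phi^{y}_{t,\omega}v|^2_{\widetilde g}$ along the Stratonovich equation and using that the $y$-projective diffusion charges every open subset of $\mathbb{S}M$, the required vanishing forces $\mathcal L_{X_k}\widetilde g=0$ for every $k=0,1,\dots,r+1$; that is, each generating field of the $y$-system is Killing for $\widetilde g$. Crucially this conclusion does not see the noise strength, which is exactly the mechanism behind the dichotomy of Theorem \ref{maintheorem}.

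The key point is now that the Killing condition is monotone under passing to a sub-collection. Since $X_0,X_1,\dots,X_r$ are all Killing for the same $\widetilde g$, running the previous computation in reverse shows that $D\phi^{x}_{t,\omega}$ likewise acts by $\widetilde g$-isometries for a.e.\ $\omega$, so $|D_x\phi^{x}_{t,\omega}|_{\widetilde g}\equiv 1$. As $M$ is compact, $\widetilde g$ and $g$ are uniformly equivalent, so $\sup_{t\ge 0}|D_x\phi^{x}_{t,\omega}|_g<\infty$ and $\lambda_1(x)=0$, contradicting $\lambda_1(x)>0$. Therefore $\lambda_1(y)>0$.

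The only substantive input is the structural step in the second paragraph --- promoting the a priori merely measurable invariant metric to a smooth one and then extracting the Killing property via the support theorem --- and this is carried out in the course of proving Theorem \ref{maintheorem}; the ordering of the Lyapunov spectrum and the monotonicity of the Killing condition under dropping $X_{r+1}$ are elementary. If one prefers to avoid invoking Baxendale's theorem, the argument is unchanged with ``there is $\widetilde g$ making $X_0,\dots,X_r$ Killing'' replaced by whatever noise-independent characterization of $\{\lambda_1=0\}$ is isolated in the proof of Theorem \ref{maintheorem}: being a condition on the vector fields alone, it automatically descends from $\{X_0,\dots,X_{r+1}\}$ to $\{X_0,\dots,X_r\}$.
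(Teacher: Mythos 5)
Your main argument (the second and third paragraphs) rests on a structural claim that is neither established in the paper nor a direct consequence of the machinery you cite: namely, that $\lambda_1(y)=\lambda_\Sigma(y)=0$ yields a \emph{smooth, deterministic} Riemannian metric $\widetilde g$ for which every $X_k$, $k=0,\dots,r+1$, is Killing. Baxendale's theorem does not supply this: its conclusion is a dichotomy in which the alternative to an (a priori only measurable, random) invariant metric is an invariant measurable family of proper subspaces, and you neither rule out that branch nor carry out the promotion from a measurable random metric to a smooth deterministic one. More importantly, you assert that this step ``is carried out in the course of proving Theorem~\ref{maintheorem}'' --- it is not. The proof of Theorem~\ref{maintheorem} never mentions invariant metrics or Killing fields; its entire mechanism is the Fisher information identity of Theorem~\ref{preliminarythm1} together with unique ergodicity of the projective process.

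Your closing fallback points in the right direction but elides the one nontrivial step. The noise-independent characterization isolated in the paper is \eqref{preliminaryeq1}: $\lambda_1=0$ iff $\widetilde X_k^* f=0$ for all $k$, where $f$ is the stationary density of the projective process. This is \emph{not} ``a condition on the vector fields alone'': it involves the stationary density, and the stationary densities of the $x$- and $y$-projective processes are a priori different objects, so the descent to a sub-collection is not automatic. The actual argument is: $\lambda_1(y)=0$ gives $\widetilde X_k^* g=0$ for $k=0,\dots,r+1$, with $g$ the $y$-projective density; discarding $k=r+1$, $g$ then solves the stationary Kolmogorov equation $\widetilde X_0^* g+\tfrac12\sum_{k=1}^r(\widetilde X_k^*)^2 g=0$ of the $x$-projective process, and unique ergodicity of that process --- this is exactly where hypotheses (H) and (I) for the $x$-system enter --- forces $g=f$. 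Then $\widetilde{FI}(f)=0$, and the identity $n\lambda_1(x)=\widetilde{FI}(f)$ (valid since $\lambda_\Sigma(x)=0$) gives $\lambda_1(x)=0$, the desired contradiction. Your observation that (H) and (I) are inherited by the $y$-system is correct and is genuinely needed to apply the Fisher information identity to $y_t$, but the identification $g=f$ via unique ergodicity is the crux of the corollary, and it is missing from your argument.
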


As an application, we study the motion of a charged particle in a fluctuating magnetic field assumed to be parallel to the $z$-axis.    
The motion of a charged particle in a magnetic field is governed by the Lorentz force, 
\begin{align*}
\dot{\mathbf{x}} &= \mathbf{v} \\
\dot{\mathbf{v}} &= \mathbf{v}\times\mathbf{B(\mathbf{x}, t)}, 
\end{align*}
where $\mathbf{x}=(x,y,z)$ and $\mathbf{v}=(v_x,v_y,v_z)$ are vectors in $\mathbb{R}^3$ for the position and the velocity, and $\mathbf{B}$ is the magnetic field. Since the magnetic field is parallel to the $z$ axis, we consider only the planar motion in the $x$-$y$ plane (as the motion of the particle in the $z$ direction is trivial). 
In particular, we assume 
\begin{equation*}
\mathbf{B} = \begin{bmatrix}
0 \\
0 \\
B(x,y,t)
\end{bmatrix},
\end{equation*}
and consider the dynamics on the $x$-$y$ plane; the system is then reduced to
\begin{align*}
\dot{x} &= v_x \\
\dot{y} &= v_y \\
\dot{v}_x &= v_yB \\
\dot{v}_y &= -v_xB.
\end{align*}
In fact, we can reduce the system further by observing that the magnetic field does not change the kinetic energy of the particle. 
Hence, we an express the velocity in polar coordinates, $(v_x, v_y) = (r\cos{\theta}, r\sin{\theta})$ and without loss of generality assume $r=1$ (unit speed) to obtain the following system
\begin{align*}
\dot{x} &= \cos{\theta} \\
\dot{y} &= \sin{\theta} \\
\dot{\theta} &= B(x,y, t). 
\end{align*}
Using stochasticity to model a fluctuating time-inhomogeneity, our main application is the following.  

\begin{theorem}\label{application}
Consider the diffusion process
\begin{align*}
dx_t &= \cos{\theta_t}\,dt \\[3pt]
dy_t &= \sin{\theta_t}\,dt \\[3pt]
d\theta(t) &= dt + \sum_{k=1}^5 
\sqrt{\epsilon}\,B_k(x_t,y_t)\circ dW_t^k,
\end{align*}
where $B_1 = 1$, $B_2 = \sin{x}$, $B_3 = \cos{x}$, $B_4 = \sin{y}$ and $B_5 = \cos{y}$. Then 
\begin{equation*}
\lambda_1^{\epsilon} > 0 \quad\forall \epsilon>0 \quad\mbox{and}\quad \liminf_{\epsilon\rightarrow 0} \frac{\lambda_1^{\epsilon}}{\epsilon} \geq C > 0.
\end{equation*}
\end{theorem}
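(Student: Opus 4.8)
The plan is to apply Theorem \ref{maintheorem} to the given diffusion process, so the entire task reduces to verifying its hypotheses: that the system is non-dissipative ($\lambda_\Sigma^\eps = 0$), that the lifted projective system satisfies the parabolic H\"ormander condition (H) and irreducibility (I), and that we are in case (1) rather than case (2) of the dichotomy, i.e.\ that $\lambda_1^\eps > 0$ for some (hence, by the dichotomy, all) $\eps > 0$. Here $M$ is the configuration space of the unit-speed particle, namely $\mathbb{T}^2 \times \S^1$ with coordinates $(x,y,\theta)$, which is compact and connected; note that the vector fields $B_k$ are chosen to be periodic precisely so that the dynamics descend to this compact manifold.

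First I would check non-dissipativity. The drift vector field is $X_0 = \cos\theta\,\partial_x + \sin\theta\,\partial_y + \partial_\theta$ and the noise vector fields are $X_k = B_k(x,y)\,\partial_\theta$ for $k=1,\dots,5$. Since each $X_k$ has divergence zero (it points purely in $\theta$ with coefficient independent of $\theta$) and $X_0$ is also divergence-free, the flow is volume-preserving; by the standard formula relating $\lambda_\Sigma$ to the expected divergence of the vector fields under the stationary measure (see Section \ref{sec:LEprelims}), we get $\lambda_\Sigma^\eps = 0$ for all $\eps$. Next, the H\"ormander and irreducibility conditions on the projective bundle $\S M$: here I would compute the linearized/projective vector fields $\tilde X_k$ from \eqref{pfield} and show that iterated Lie brackets of $\{[\tilde X_0,\tilde X_k],\tilde X_k\}$ span $T\S M$ at every point. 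The base-space spanning (in $T M$) should already follow from brackets of $\partial_\theta$ (coming from $B_1 = 1$) with $X_0$, which produce $\partial_x$ and $\partial_y$ directions via $\cos\theta, \sin\theta$ rotating; the functions $\sin x,\cos x,\sin y,\cos y$ are included so that after differentiating one recovers enough independent directions in the fiber (the $z$-component of $\tilde X_k$ involves $\nabla_x X_k \cdot z$, which is nonzero precisely because the $B_k$ are non-constant), and one checks the fiber direction is filled by taking one more bracket. Irreducibility (I) should follow from a geometric control argument: with the control $u(t)$ playing the role of $\dot W$, the controlled system $\dot\theta = 1 + \sum u_k B_k$ lets us make $\dot\theta$ essentially arbitrary while $(x,y)$ moves along the corresponding unit-speed curve, and accessibility on $\S M$ together with the recurrent structure gives positivity of transition densities.

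The remaining, and I expect principal, obstacle is showing $\lambda_1^\eps > 0$ for at least one $\eps$ — equivalently ruling out case (2) of Theorem \ref{maintheorem}, in which $\lambda_1^\eps \equiv 0$. A vanishing top Lyapunov exponent together with $\lambda_\Sigma = 0$ in this $n=3$ setting would force, via the structure of the Oseledets splitting and invariant measures on $\S M$, a strong rigidity: the stationary projective measure $f^\eps$ would have to be supported in a way consistent with zero exponent, which typically means the linearized cocycle preserves a measurable subbundle on which it acts isometrically. To exclude this I would argue by contradiction using the Furstenberg-type criterion: if $\lambda_1 = 0$ then there is an invariant measure on $\S M$ that is invariant under the (projectivized) linearized flow, and one shows this is incompatible with the explicit form of the linearization of our system — the shear coming from $\partial_x X_0$, $\partial_y X_0$ (the particle's heading couples position to direction) plus the noise-induced spreading in $\theta$ cannot preserve any such measurable line field. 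Concretely, one exhibits a control path along which the derivative cocycle is strictly expanding in some direction, or uses the hypoellipticity to show $f^\eps$ cannot concentrate appropriately; this is the place where the specific choice of the five vector fields $B_k$ must really be used, and it parallels the geometric control step in \cite{BBPS22, BPS22}. Once some $\eps_0$ with $\lambda_1^{\eps_0} > 0$ is produced, the dichotomy immediately upgrades this to $\lambda_1^\eps > 0$ for all $\eps$ together with the quantitative lower bound $\liminf_{\eps\to 0}\lambda_1^\eps/\eps \geq C$, completing the proof.
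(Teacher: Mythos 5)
Your overall skeleton is the right one and matches the paper: reduce everything to Theorem~\ref{maintheorem} by checking non-dissipativity, (H), (I), and then ruling out the branch $\lambda_1^\epsilon \equiv 0$. The non-dissipativity check (all vector fields divergence-free on $\mathbb{T}^2\times\mathbb{S}^1$) is correct. However, the three remaining verifications are only gestured at, and the one you yourself flag as the ``principal obstacle'' --- showing $\lambda_1^\epsilon > 0$ --- is genuinely missing. The paper does \emph{not} go through a Furstenberg-type invariant-line-field argument or exhibit an expanding control path. Instead it uses the infinitesimal characterization \eqref{preliminaryeq1}: $\lambda_1^\epsilon = 0$ forces $\widetilde{X}_k^* f^\epsilon = 0$ for \emph{all} $k=0,1,\dotsc,5$, which, writing $g = \log f^\epsilon$, becomes an overdetermined system of first-order PDEs $-\widetilde{X}_k g = \operatorname{div}\widetilde{X}_k$. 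One then reduces the system (the $B_1=1$ equation gives $\partial_{x_3} g = 0$), evaluates $\nabla g$ along the curve $a(x_3)=(0,0,x_3,1/\sqrt 3,1/\sqrt 3,1/\sqrt 3)$, integrates the drift equation over three intervals, and shows by Gaussian elimination that the resulting linear system for $\nabla g(a(0))$ is inconsistent. This concrete computation is where the specific choice of the $B_k$ enters, and your proposal does not supply any substitute for it; the alternative routes you name would require constructing an invariant measurable subbundle argument that is strictly harder than the paper's direct PDE contradiction.

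Two further gaps of substance. First, hypoellipticity of the projective process is not routine here: the natural bracket computation fails on the set $\{v_3=\pm1\}$, and the paper needs its analytic-vector-field refinement of H\"ormander's condition (Corollary~\ref{preliminarycor1}, conditions (H1)--(H2)) plus a moving-frame calculation to close this. Second, irreducibility is the delicate point you underestimate: because the particle moves at unit speed, the reachable set in the base cannot be filled by brute force, and the paper's argument splits the control problem between $\mathbb{S}^2$ and $\mathbb{T}^3$, exploiting that the fiber components of $\widetilde X_0,\widetilde X_1$ vanish on $\{v_3=0\}$ so the base can be steered there without disturbing the fiber. Your one-sentence appeal to ``accessibility together with the recurrent structure'' does not address either difficulty.
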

\begin{remark}
Note that Corollary \ref{cor:NewNoise} implies that adding more fluctuating modes to the magnetic field does not alter the result (although it may affect the constant $C$). 
\end{remark}

To fit the notations for the general framework, we denote in the following sections $(x_1, x_2, x_3) = (x, y, \theta) \in \mathbb T^3$, and $(v_1, v_2, v_3)$ the projective coordinates on $\mathbb S^3$. 

\subsection{A remark on the \`a la Furstenberg invariance principle}

One interesting corollary of Theorem \ref{maintheorem} is that positive Lyapunov exponent can be verified for all $\epsilon>0$ by checking a condition only at $\eps = 1$ for non-dissipative systems.
Here, we outline a classical method for inferring positive Lyapunov exponents for volume-preserving SDEs. The approach, originating from Furstenberg \cite{F63}, has been adapted into different versions by various authors, with \cite{B89,C87} being particularly relevant. The version we present here is from \cite{C87}. 

Consider an SDE on an $n$-dimensional connected, compact Riemannian manifold $M$,
\begin{equation*}
dx_t = X_0(x_t)\,dt + \sum_{k=1}^r X_k(x_t)\circ dW^k_t.
\end{equation*}
It is volume preserving if and only if $\mbox{div}X_0 = \mbox{div}X_1 = \dotsc = \mbox{div}X_r = 0$. Under the volume-preserving hypothesis, we can consider an induced process on the special linear bundle $SLM\rightarrow M$. For each $x\in M$, one may consider an element in $SL_xM$ as an ordered basis $\{u_1,\dotsc, u_n\}\subseteq T_xM$ such that the parallelepiped spanned by these vectors has volume $1$. The induced process on $SLM$ is
\begin{align*}
dx_t &= X_0(x_t)\,dt + \sum_{k=1}^r X_k(x_t)\circ dW^k_t \\[3pt]
du_{j,\,t} &= \nabla_{u_{j,\,t}}X_0(x_t)\,dt + \sum_{k=1}^r \nabla_{u_{j,\,t}}X_k(x_t)\circ dW^k_t \quad\mbox{for}\quad j=1,\dotsc, n,
\end{align*}
where $\nabla Y$ denote the covariant derivative of a smooth vector field $Y$ on $M$. 

A sufficient condition for a positive Lyapunov exponent is given in \cite{C87} as the following:
\begin{enumerate}
\item[($H_1'$)] $\mbox{Lie}_x(X_0;\,X_1,\dotsc, X_r) = T_xM$ for all $x\in M$;

\item[($H_2'$)] $\mbox{Lie}_u(Y_0;\,Y_1,\dotsc, Y_r) = T_uSLM$ for some $u\in SLM$.
\end{enumerate}
Condition $(H_1')$ is the standard parabolic H\"{o}rmander condition for the base process. Condition $(H_2')$ ensures that the support of the induced process on $SLM$ with initial $u$ will have a non-empty interior, which is crucial for applying Furstenberg's argument to conclude $\lambda_1>0$. For further details, readers can refer to \cite{C87}. 
Consequently, if \eqref{bprocess} satisfies conditions $(H)$, $(I)$, and $(H_2')$, then not only is $\lambda_1^\eps > 0$ for all $\eps > 0$ (already a consequence of \cite{C87}), but we also obtain the slightly more quantitatve estimate: $\exists C > 0$ such that
\begin{align*}
\liminf_{\eps \to 0} \frac{\lambda_1^\eps}{\eps} \geq C.
\end{align*} 

The structure of the rest of the paper is as follow: Section~\ref{sec:Preliminaries} revisits basic properties of Lyapunov exponents, provides some notes on hypoellipticity and irreducibility, and reviews certain results from \cite{BBPS22}. The proof of Theorem~\ref{maintheorem} is presented in Section~\ref{sec:Mainresult}, followed by the proof of Theorem~\ref{application} in Section~\ref{sec:Application}. A discussion on the sharpness of the decay bound in Theorem~\ref{maintheorem} is provided in Section~\ref{sec:AppendixA}.

\section{Preliminaries}\label{sec:Preliminaries}

\subsection{Lyapunov exponents} \label{sec:LEprelims} 

The existence of the Lyapunov exponents is guaranteed by multiplicative ergodic theorem \cite{B89,O68,R79,W93}; in fact, there exists an almost surely constant Lyapunov spectrum when one has ergodicity.

%\begin{proposition}
%Let $\phi_{t,\omega}$ be a stochastic flow of diffeomorphisms arising from an SDE with smooth coefficients on a compact manifold which has a unique stationary measure $\mu$.
%Then $\exists \lambda_1$ and $\lambda_\Sigma$ such that for $\mu \times \PP$-a.e. $x$ and $\omega$ the following limits hold  
%\begin{align}
%\lambda_1  & = \lim_{t\rightarrow\infty} \frac{1}{t} \log \abs{D_x \phi_{t,\omega}(x)} \\
%\lambda_\Sigma  & = \lim_{t\rightarrow\infty} \frac{1}{t} \log \abs{\mathrm{det} D_x \phi_{t,\omega}(x)}. 
%\end{align} 
%\end{proposition}
%\begin{proof}
%The Furstenberg-Kesten theorem \cite{FK60} applies as soon as one  checks the mild integrability condition  
%\begin{align*}
%\EE \sup_{t \in (0,1)} \int_{M} \log^+ \abs{D_x \phi_{t,\omega}(x)} \dee \mu + \EE \sup_{t \in (0,1)} \int_{M} \log^+ \abs{D_x \phi_{t,\omega}^{-1}(x)} \dee \mu < \infty. 
%\end{align*}
%However, by the assumption that $M$ is compact and the smoothness of the vector fields $X_i$'s, there exists a deterministic constant $C \geq 0$ such that 
%\begin{align*}
%\abs{D_x \phi_{t,\omega}(x)} \leq e^{Ct}. 
%\end{align*}
%This gives the integrability condition that we need.
%\textcolor{red}{is this correct? - this would maybe only work for additive noise; I think you instead get the following: for all $p \geq 1$ and $x$,  
%\begin{align*}
%\EE \abs{\log \abs{D_x \phi_{t,\omega}(x)}}^p \lesssim_p 1. 
%\end{align*}
%}

%\end{proof}

\begin{theorem}[multiplicative ergodic theorem]\label{introductionthm1}
Let $\mu$ be an ergodic stationary measure for $x_t$ and suppose that 
\begin{align*}
\EE \int \sup_{t \in (0,1)} \left(\log^+\abs{D_x\phi_{t,\omega}} + \log^+\abs{(D_x\phi_{t,\omega})^{-1}} \right) \dee x < \infty.
\end{align*}
Then there is a deterministic number $l \leq d$, a $\mathbb P \times \mu$-measurably-varying, almost-surely invariant flag of linear subspaces $\set{F_{j}}_{j=1}^{l+1}$ such that for almost-every $(\omega, x)$, 
\begin{equation*}
\emptyset = F_{l+1}(\omega,x) \subset F_l(\omega,x) \dotsc, \subset F_1(\omega,x) = T_xM,
\end{equation*}
and a set of deterministic numbers $-\infty < \lambda_l<\dotsc<\lambda_1$ so that $\mathbb{P}\times\mu$-a.e.
\begin{equation*}
\lambda(\omega,x,v) = \lim_{t\rightarrow\infty} \frac{\Vert D_x\phi_{t,\omega}\,v\Vert}{\Vert v\Vert} =  \lambda_k \quad\mbox{for}\quad v\in F_k(\omega, x)\setminus F_{k+1}(\omega,x).
\end{equation*}
Let $d_k = \mbox{dim}F_k(\omega, x) - \mbox{dim}F_{k+1}(\omega, x)$. 
%; we have the Lyapunov spectrum 
%$\{\lambda_{1,1},\dotsc,\lambda_{1,d_1}, \dotsc, \lambda_{l,1},\dotsc, \lambda_{l,d_l}\}$.
\end{theorem}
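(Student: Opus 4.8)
The plan is to recognize Theorem~\ref{introductionthm1} as the one-sided multiplicative ergodic theorem of Oseledets, applied to the derivative cocycle of the stochastic flow over the Markov skew-product, and to follow the now-standard route through Kingman's subadditive ergodic theorem. First I would pass to discrete time: let $\sigma$ be the time-one shift on $\Omega$ and define the skew product $\Theta(\omega,x) = (\sigma\omega,\phi_{1,\omega}(x))$ on $\Omega\times M$. Stationarity of $\mu$ makes $\PP\times\mu$ a $\Theta$-invariant probability measure, and ergodicity of $\mu$ for the Markov process upgrades (by the standard correspondence between the canonical path space and the flow) to ergodicity of $\Theta$. Setting $A(\omega,x) = D_x\phi_{1,\omega}$, the chain rule gives the cocycle identity $D_x\phi_{n,\omega} = A(\Theta^{n-1}(\omega,x))\cdots A(\omega,x)$, and the integrability hypothesis yields $\log^+\norm{A},\ \log^+\norm{A^{-1}} \in L^1(\PP\times\mu)$.

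Next I would establish existence of the exponents. For $p = 1,\dots,d$ the functions $(\omega,x)\mapsto \log\norm{\wedge^p D_x\phi_{n,\omega}}$ are subadditive along $\Theta$, so Kingman's theorem produces $\PP\times\mu$-a.e. and $L^1$ limits $\Lambda_p = \lim_{n} \tfrac1n\log\norm{\wedge^p D_x\phi_{n,\omega}}$, which are constants by ergodicity. The increments $\Lambda_p - \Lambda_{p-1}$ (with $\Lambda_0 = 0$) form a non-increasing $d$-tuple; its distinct values in decreasing order are $\lambda_1 > \cdots > \lambda_l$, and $d_k$ is the number of $p$ with $\Lambda_p-\Lambda_{p-1} = \lambda_k$. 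The finiteness $\lambda_l > -\infty$ follows from $\norm{D_x\phi_{n,\omega}v} \geq \norm{v}\prod_{j=0}^{n-1}\norm{A(\Theta^j(\omega,x))^{-1}}^{-1}$ together with $\log\norm{A^{-1}}\in L^1$ and Birkhoff.

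Then I would construct the filtration by setting, for $\PP\times\mu$-a.e. $(\omega,x)$,
\[
F_k(\omega,x) = \set{v\in T_xM : \limsup_{n\to\infty}\tfrac1n\log\norm{D_x\phi_{n,\omega}v} \leq \lambda_k},
\]
with $F_{l+1}(\omega,x) = \set{0}$. The triangle inequality shows each $F_k$ is a linear subspace; they are nested; the cocycle property gives the equivariance $A(\omega,x)F_k(\omega,x) = F_k(\Theta(\omega,x))$; and measurability in $(\omega,x)$ follows from that of the cocycle. The essential point is $\dim F_k(\omega,x) - \dim F_{k+1}(\omega,x) = d_k$, together with the upgrade of the defining $\limsup$ to a genuine limit equal to $\lambda_k$ on $F_k\setminus F_{k+1}$. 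This is done exactly as in Oseledets' theorem: one compares the growth of $\norm{D_x\phi_{n,\omega}}$ on a suitably chosen basis with the exterior-power rates $\Lambda_p$, using that $\Lambda_p$ is realized by the top $p$ directions, and the sub/superadditive squeeze forces the dimensions to match the multiplicities and the $\limsup$ to be attained. Finally, to return to continuous time, write $t = n+s$ with $s\in[0,1)$ and bound $\norm{D_x\phi_{t,\omega}v}$ above and below by $\norm{D_{x_n}\phi_{s,\sigma^n\omega}}^{\pm 1}\norm{D_x\phi_{n,\omega}v}$; the hypothesis $\EE\int\sup_{t\in(0,1)}\big(\log^+\norm{D_x\phi_{t,\omega}} + \log^+\norm{(D_x\phi_{t,\omega})^{-1}}\big)\dee x < \infty$ with Birkhoff gives $\tfrac1n\sup_{s\in(0,1)}\log\norm{D_{x_n}\phi_{s,\sigma^n\omega}}^{\pm1}\to 0$ a.s., so the continuous-time growth rates coincide with the discrete ones and define the same $\lambda_k$ and $F_k$.

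The genuinely nontrivial step is the construction of the filtration: showing that $\dim F_k - \dim F_{k+1}$ equals the multiplicity $d_k$ read off from the exterior-power exponents, and that the defining $\limsup$ is an honest limit. Everything else is a soft measure-theoretic reduction or a direct application of Kingman/Birkhoff; since the result is classical, in the paper itself it would suffice to cite \cite{B89,O68,R79,W93} and verify only that the integrability hypothesis of the theorem is implied by the smoothness and compactness assumptions in force.
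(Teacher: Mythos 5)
Your proposal is correct in outline and matches how the paper handles this statement: the paper gives no proof at all, treating it as the classical Oseledets theorem and citing \cite{B89,O68,R79,W93}, with the only paper-specific content being the remark that compactness of $M$ and smoothness of the vector fields guarantee the integrability hypothesis. Your sketch of the standard route (time-one discretization to the skew product $\Theta$, Kingman applied to $\log\norm{\wedge^p D_x\phi_{n,\omega}}$ to extract the exponents, the $\limsup$-defined filtration, and the continuous-time interpolation via the $\sup_{t\in(0,1)}$ integrability) is the accepted proof of the cited result; the one step you defer ("exactly as in Oseledets' theorem" for matching $\dim F_k - \dim F_{k+1}$ with the exterior-power multiplicities and upgrading the $\limsup$ to a limit) is genuinely the hard part of the classical argument, but deferring it to the literature is exactly what the paper itself does, and your closing sentence correctly identifies that citation plus verification of integrability is all that is required here.
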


Our assumptions that $M$ is compact and the vector fields are smooth imply the integrability conditions required for Theorem~\ref{introductionthm1}; refer to \cite{B89}. 
Here, $\lambda_1$ represents the top Lyapunov exponent, which quantifies the typical stability of the system. Let $\lambda_{\Sigma} = \sum_{k=1}^l d_k\lambda_k$ denote the sum of the Lyapunov spectrum. We define a system as \emph{non-dissipative} if $\lambda_{\Sigma}=0$. 

Next, we review some key results from \cite{BBPS22}, where the authors proposed a Fisher information scheme for studying the small noise asymptotics of the top Lyapunov exponent. 
Let $\rho^{\epsilon}$ and $f^{\epsilon}$ denote the stationary density for $x_t^{\epsilon}$ and $w_t^{\epsilon}$, respectively.
The existence of such stationary distributions is established using the Krylov-Bogoliubov procedure, and is guaranteed when $M$ is compact.
The uniqueness and the smoothness of the densities are deduced from irreducibility and H\"{o}rmander's theorem, utilizing the Doob-Khasminskii theorem \cite{DPZ96}.
Under assumptions (H) and (I), the stationary densities are everywhere positive \cite{HM15}.
Define the Fisher information 
\begin{align*}
FI(\rho^{\epsilon}) &= \frac{1}{2}\sum_{k=1}^r \int_M \frac{\vert X_k^*\rho^{\epsilon}\vert^2}{\rho^{\epsilon}}\,dx, \\
\widetilde{FI}(f^{\epsilon}) &= \frac{1}{2}\sum_{k=1}^r \int_{SM} \frac{\vert \widetilde{X}_k^*f^{\epsilon}\vert^2}{f^{\epsilon}}\,dw.
\end{align*}
A key observation in \cite{BBPS22} is the following formula relating the Lyapunov exponents to the  Fisher information.
This formula can be proven either through the Furstenberg-Khasminskii formula or through the relative entropy formulas in \cite{B89}.

\begin{theorem}[\cite{BBPS22}\label{preliminarythm1} Proposition 3.2 (Fisher Information Identity)]
Under the assumption that $(M,g)$ is a connected, compact Riemannian manifold, and under the assumptions (H) and (I), the following formulas hold:
\begin{align*}
FI(\rho^{\epsilon}) &= –\frac{\lambda_{\Sigma}^{\epsilon}}{\epsilon}, \\[5pt]
\widetilde{FI}(f^{\epsilon}) &= \frac{n\lambda_1^{\epsilon} - 2\lambda_{\Sigma}^{\epsilon}}{\epsilon}. 
\end{align*}
\end{theorem}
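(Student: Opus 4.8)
The plan is to combine the Furstenberg--Khasminskii integral representations of $\lambda_\Sigma^\eps$ and $\lambda_1^\eps$ with an integration by parts against the stationary Fokker--Planck equations, plus one geometric identity for the divergence of the projective lift on the sphere bundle. First I would record the two Furstenberg--Khasminskii formulas. By Jacobi's formula for the Stratonovich flow,
\[
\log\abs{\det D_x\phi_{t,\omega}} = \int_0^t \Div X_0(x_s)\,ds + \sqrt\eps\sum_{k=1}^r\int_0^t \Div X_k(x_s)\circ dW_s^k ;
\]
rewriting each Stratonovich integral in It\^o form (the $k$-th one acquiring the correction $\tfrac{\eps}{2}\int_0^t X_k(\Div X_k)(x_s)\,ds$), dividing by $t$ and letting $t\to\infty$ so that the martingale terms vanish and Birkhoff's theorem turns the time averages into spatial averages against $\rho^\eps$, one obtains
\[
\lambda_\Sigma^\eps = \int_M\Big(\Div X_0 + \tfrac{\eps}{2}\sum_{k=1}^r X_k(\Div X_k)\Big)\rho^\eps\,dx .
\]
Likewise, a vector $v_t$ transported by the linearized flow satisfies $d\log\abs{v_t} = \brak{z_t,\nabla X_0(x_t)z_t}\,dt + \sqrt\eps\sum_k\brak{z_t,\nabla X_k(x_t)z_t}\circ dW_t^k$, and running the identical argument on the projective process $(w_t)$ (whose driving fields are the $\widetilde X_k$) gives
\[
\lambda_1^\eps = \int_{SM}\Big(\brak{z,\nabla X_0 z} + \tfrac{\eps}{2}\sum_{k=1}^r \widetilde X_k\big(\brak{z,\nabla X_k z}\big)\Big)f^\eps\,dw .
\]
Alternatively one can quote the relative-entropy formulas of \cite{B89}.

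The core of the argument is an integration by parts, which I would carry out first on $M$ and then transcribe to $SM$. The stationary density solves $\cL^*\rho^\eps = 0$, where $\cL^* = X_0^* + \tfrac{\eps}{2}\sum_k(X_k^*)^2$ and $X_k^*\varphi = -\Div(X_k\varphi)$; since $(H)$ and $(I)$ make $\rho^\eps$ smooth and strictly positive on the compact $M$, I may test this equation against $\log\rho^\eps$ and integrate. Using $X_k^*\rho^\eps = -\rho^\eps\big(\Div X_k + X_k(\log\rho^\eps)\big)$, the drift term contributes $-\int_M\rho^\eps\,\Div X_0\,dx$ and the diffusion term, after integrating by parts, contributes exactly $-\eps\, FI(\rho^\eps) - \tfrac{\eps}{2}\sum_k\int_M\rho^\eps\,X_k(\Div X_k)\,dx$; setting the sum to zero and comparing with the formula for $\lambda_\Sigma^\eps$ yields $FI(\rho^\eps) = -\lambda_\Sigma^\eps/\eps$. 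The same manipulation on $SM$, with $\widetilde X_k$, $f^\eps$, $\Div_{SM}$ in place of $X_k$, $\rho^\eps$, $\Div_M$, gives
\[
\eps\,\widetilde{FI}(f^\eps) = -\int_{SM}\Big(\Div_{SM}\widetilde X_0 + \tfrac{\eps}{2}\sum_{k=1}^r\widetilde X_k\big(\Div_{SM}\widetilde X_k\big)\Big)f^\eps\,dw .
\]

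It remains to evaluate this right-hand side, and this is where the geometry enters. I would establish, for any smooth vector field $Y$ on $M$ with lift $\widetilde Y$ defined as in \eqref{pfield}, the divergence identity
\[
\Div_{SM}\widetilde Y(x,z) = 2\,\Div_M Y(x) - n\,\brak{z,\nabla_x Y(x)\,z} .
\]
In the Sasaki metric the vertical component $(1-z\otimes z)\nabla Y\,z$, restricted to a fiber sphere, is the tangential projection of the linear field $z\mapsto\nabla Y\,z$ and hence has intrinsic divergence $\tr(\nabla Y) - n\brak{z,\nabla Y\,z} = \Div_M Y - n\brak{z,\nabla Y\,z}$ (a short computation for linear fields on the round sphere), while the horizontal lift of $Y$ contributes a further $\Div_M Y$. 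Plugging this identity into the expression for $\eps\,\widetilde{FI}(f^\eps)$, using that $\Div X_k$ and $X_k(\Div X_k)$ depend on $x$ alone so their $f^\eps$-averages reduce to $\rho^\eps$-averages (the $x$-marginal of $f^\eps$ is $\rho^\eps$, since $(x_t)$ is autonomous), and recognizing the two resulting brackets as precisely $\lambda_\Sigma^\eps$ and $\lambda_1^\eps$ from the Furstenberg--Khasminskii formulas, one arrives at $\eps\,\widetilde{FI}(f^\eps) = n\lambda_1^\eps - 2\lambda_\Sigma^\eps$, which is the claim.

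The analytic steps --- the It\^o--Stratonovich conversions, the ergodic limit, and the integrations by parts --- are routine given the compactness of $M$ and the smoothness and strict positivity of $\rho^\eps$ and $f^\eps$ supplied by $(H)$ and $(I)$. The one genuinely delicate step, and the place I expect to be the main obstacle, is the divergence identity on $SM$: one has to work with the Sasaki metric over a curved base, check that no spurious curvature terms survive the computation (or that they integrate to zero against $f^\eps$), and pin down the constants $2$ and $n$ exactly --- these are precisely the constants that make the projective identity consistent with $FI(\rho^\eps) = -\lambda_\Sigma^\eps/\eps$ and with the manifest positivity $\widetilde{FI}(f^\eps)\ge 0$.
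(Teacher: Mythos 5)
This statement is quoted from \cite{BBPS22} (Proposition~3.2) and the paper gives no proof of its own, only the remark that it follows from the Furstenberg--Khasminskii formula or the relative-entropy formulas of \cite{B89}; your argument is exactly the first of these routes (FK representations of $\lambda_\Sigma^\eps$ and $\lambda_1^\eps$, testing the stationary Kolmogorov equations against $\log\rho^\eps$ and $\log f^\eps$, and the divergence identity $\Div_{\mathbb{S}M}\widetilde Y = 2\Div_M Y - n\brak{z,\nabla Y z}$), which is the proof in the cited reference, and it is correct. The one step you flag as delicate is indeed fine: in the Sasaki metric the volume locally factors, so the vertical part contributes only its fiberwise (round-sphere) divergence $\tr\nabla Y - n\brak{z,\nabla Y z}$ and the horizontal lift contributes $\Div_M Y$ with no surviving curvature terms --- just beware that the divergences displayed in Section~\ref{sec:Application} use the ambient rather than the intrinsic convention, so they differ from your identity by $2\brak{z,\nabla Y z}$.
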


An immediate consequence of Theorem~\ref{preliminarythm1} is an infinitesimal characterization of the positive top Lyapunov exponent.
For non-dissipative systems ($\lambda_{\Sigma}^{\epsilon}=0$), Theorem~\ref{preliminarythm1} implies 
\begin{equation*}
0 = n\lambda_1^{\epsilon} = \epsilon \widetilde{FI}(f^{\epsilon}) \quad\Leftrightarrow\quad \widetilde{X}_k^*f^{\epsilon} = 0 \quad\mbox{for}\quad k= 1,\dotsc, r.
\end{equation*}
Furthermore, if we make use of the Kolmogorov equation,
\begin{equation*}
0 = \mathcal{L}_{\epsilon}^* f^{\epsilon} = \tilde{X}_0^*f^{\epsilon} + \frac{\epsilon}{2}\sum_{k=1}^r \left(\tilde{X}_k^*\right)^2f^{\epsilon} = \tilde{X}_0^*f^{\epsilon}.
\end{equation*}
We can then combine the above to obtain
\begin{equation}\label{preliminaryeq1}
0 = \lambda_1^{\epsilon} \quad\Leftrightarrow\quad \widetilde{X}_k^*f^{\epsilon} = 0 \quad\mbox{for}\quad k= 0,1,\dotsc, r.
\end{equation}
In other words, $f^{\epsilon}$ has to be invariant under the flow generated by $X_k$ for $k=0,1,\dotsc, r$. This provides a convenient method to check the positivity of the top Lyapunov exponent; refer to  Section~\ref{sec:Application}. This observation also forms the core of the proof of Theorem~\ref{maintheorem}. 

An important result in \cite{BBPS22} is the following dichotomy, which applies to \eqref{bprocess}:

\begin{theorem}[\cite{BBPS22} Proposition 6.1]\label{preliminarythm2}
Under the assumption that $(M,g)$ is a connected, compact Riemannian manifold, and under the assumptions (H) and (I), the following formulas hold:
\begin{enumerate}
\item Either
\[
\lim_{\epsilon\rightarrow 0}\frac{\lambda_1^{\epsilon}}{\epsilon} = \infty;
\]
\item Or the zero-noise flow $w_t^0$ admits a stationary density $f^0$, and a subsequence of $f^{\epsilon}$ converges to $f^0$ in $L^1$.
\end{enumerate}
\end{theorem}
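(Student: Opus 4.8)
The proof is a compactness argument for the family of stationary projective densities $\{f^\epsilon\}_{\epsilon>0}$ on $SM$. The plan is: assuming the first alternative fails, extract a subsequence along which the projective Fisher information stays bounded; convert this into a uniform regularity estimate on $\sqrt{f^\epsilon}$; pass to the limit in the stationary Fokker--Planck equation to force the limit to be invariant for the deterministic flow of $\widetilde X_0$; and finally upgrade weak convergence to $L^1$ convergence to a genuine density. So suppose $\lim_{\epsilon\to 0}\lambda_1^\epsilon/\epsilon=\infty$ fails. By the Fisher Information Identity (Theorem~\ref{preliminarythm1}), $\widetilde{FI}(f^\epsilon)=n\lambda_1^\epsilon/\epsilon-2\lambda_\Sigma^\epsilon/\epsilon=n\lambda_1^\epsilon/\epsilon+2\,FI(\rho^\epsilon)$, so there is a sequence $\epsilon_j\to 0$ along which $\widetilde{FI}(f^{\epsilon_j})\le C$ (in the non-dissipative case $FI(\rho^\epsilon)=0$ and this is immediate from the failure of alternative~1; in general one must also control the base densities $\rho^\epsilon$, which is done in \cite{BBPS22}). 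Writing $g_j:=\sqrt{f^{\epsilon_j}}$ and using $\widetilde X_k^*(g_j^2)=-2g_j\,\widetilde X_k g_j-(\Div\widetilde X_k)\,g_j^2$ together with $\|g_j\|_{L^2(SM)}^2=\int_{SM}f^{\epsilon_j}\,dw=1$, the bound $\widetilde{FI}(f^{\epsilon_j})\le C$ yields $\sum_{k=1}^r\|\widetilde X_k g_j\|_{L^2(SM)}^2\lesssim 1$; that is, $g_j$ is bounded in the ``horizontal'' Sobolev space generated by the noise fields $\widetilde X_1,\dots,\widetilde X_r$.

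Next I would pass to the limit in the weak form of $\mathcal L_{\epsilon_j}^* f^{\epsilon_j}=0$. For $\phi\in C^\infty(SM)$,
\begin{equation*}
\int_{SM}f^{\epsilon_j}\,\widetilde X_0\phi\,dw=-\frac{\epsilon_j}{2}\sum_{k=1}^r\int_{SM}(\widetilde X_k^* f^{\epsilon_j})\,\widetilde X_k\phi\,dw,
\end{equation*}
and by Cauchy--Schwarz each term on the right is bounded by $(2\widetilde{FI}(f^{\epsilon_j}))^{1/2}\|\widetilde X_k\phi\|_{L^\infty}$, so the right-hand side is $O(\epsilon_j)\to 0$. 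Since $SM$ is compact, $\{f^{\epsilon_j}\,dw\}$ is tight; passing to a further subsequence it converges weakly-$*$ to a probability measure $\nu$ on $SM$, and the display forces $\int_{SM}\widetilde X_0\phi\,d\nu=0$ for every $\phi$, i.e. $\widetilde X_0^*\nu=0$ distributionally. Hence $\nu$ is invariant under the deterministic flow generated by $\widetilde X_0$ — the zero-noise projective flow $w_t^0$.

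It remains to show $\nu$ is absolutely continuous with density $f^0$ and that $f^{\epsilon_j}\to f^0$ in $L^1(SM)$. For this I would combine the horizontal Sobolev bound on $g_j$ with the additional regularity carried by $\widetilde X_0^* f^{\epsilon_j}=-\tfrac{\epsilon_j}{2}\sum_k(\widetilde X_k^*)^2 f^{\epsilon_j}$ and the parabolic H\"ormander condition (H) to obtain an $\epsilon$-uniform bound on $f^{\epsilon_j}$ in a space compactly embedded in $L^1(SM)$ (e.g.\ a fractional Sobolev space $H^s(SM)$, $s>0$, via a sub-Riemannian Sobolev/Rellich-type inequality). Extracting one more subsequence gives $f^{\epsilon_j}\to f^0$ in $L^1$; then necessarily $\nu=f^0\,dw$, so $f^0\ge 0$, $\int f^0=1$, and by the previous paragraph $f^0\,dw$ is invariant for $w_t^0$ — which is exactly the asserted stationary density.

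The main obstacle is precisely this last step. The Fisher information bounds only the derivatives of $g_j$ along the noise directions $\widetilde X_1,\dots,\widetilde X_r$, while (H) additionally invokes the drift $\widetilde X_0$ through the brackets $[\widetilde X_0,\widetilde X_k]$; turning ``first horizontal derivatives in $L^2$'' into an honest gain of regularity requires a hypoelliptic estimate for $\mathcal L_{\epsilon_j}^*$ whose constants do not blow up as $\epsilon_j\to 0$, which is delicate because the second-order part of $\mathcal L_{\epsilon_j}^*$ degenerates in that limit. One must exploit the cancellation between the factor $\epsilon_j$ in front of $\sum_k(\widetilde X_k^*)^2$ and the powers of $\epsilon_j$ lost when commuting to recover the missing directions via $[\widetilde X_0,\widetilde X_k]$ — or, alternatively, work directly with $g_j=\sqrt{f^{\epsilon_j}}$ and the transport-type identity it satisfies and close the estimate with a quantitative div--curl/compensated-compactness argument. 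Once this compact embedding is secured, the rest of the proof is soft.
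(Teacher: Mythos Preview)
The paper does not prove this statement; it is quoted verbatim as a preliminary result from \cite{BBPS22} (Proposition~6.1 there) and invoked as a black box in the proof of Theorem~\ref{maintheorem}. There is therefore no proof in the present paper to compare against.

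That said, your outline matches the architecture of the argument one expects: the failure of alternative~(1) gives a subsequence with uniformly bounded projective Fisher information, hence a uniform $L^2$ bound on $\widetilde X_k\sqrt{f^{\epsilon_j}}$ for $k=1,\dots,r$; weak-$*$ compactness of probability measures on the compact manifold $SM$ and passage to the limit in the Fokker--Planck equation identify any limit as $\widetilde X_0^*$-invariant; and the remaining task is to upgrade weak-$*$ convergence of $f^{\epsilon_j}\,dw$ to $L^1$ convergence of the densities. You have correctly located the only nontrivial step: the Fisher information bounds derivatives along the noise directions $\widetilde X_1,\dots,\widetilde X_r$ only, while the spanning condition~(H) is \emph{parabolic} and genuinely needs $\widetilde X_0$ through brackets, so no off-the-shelf sub-Riemannian Rellich theorem applies. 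Closing this gap --- an $\epsilon$-uniform hypoelliptic compactness estimate exploiting both the stationarity equation and the bracket structure --- is precisely the technical content supplied in \cite{BBPS22}. As written, your proposal is a correct roadmap that honestly stops at the hard step rather than a complete proof; you should consult \cite{BBPS22} directly for how the uniform regularity is obtained.
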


\subsection{Hypoellipticity and irreducibility of diffusion processes}
In this section, we include some discussion about hypoellipticity and irreducibility of diffusion processes and its consequences. 

Let us start with some general remarks on hypoellipticity and irreducibility. We introduce a more adaptable condition for hypoellipticity based on results from \cite{OR73}. We will be working with a diffusion process on a manifold $N$,
\begin{equation}\label{noteseq1}
dy_t = Y_0(y_t)\,dt + \sum_{k=1}^r Y_k(y_t)\circ dW_t^k.
\end{equation}
In our context, $N$ can be either the compact base manifold $M$ or its sphere bundle $SM$. Let us start by listing several fundamental concepts. Let $\mathcal{B}$ be the Borel $\sigma$-algebra, and $(N,\mathcal{B},\lambda)$ be the measure space with Riemannian volume measure $\lambda$.
Denote $B_b(N, \mathcal{B})$ as the set of bounded Borel functions, and $\mathcal{M}(N,\mathcal{B})$ as the set of Borel probability measures. Let $P_t(y, dz)$ be the transition kernel associated with the diffusion process. The diffusion process is linked to a Markov semigroup on bounded Borel functions $P_t: B_b(N,\mathcal{B})\rightarrow B_b(N,\mathcal{B})$ given by
\begin{equation*}
P_tf(y) = \int f(z)\,P_t(y,\,dz), 
\end{equation*}
with the corresponding dual action on the set of probability measures $P_t^*: \mathcal{M}(N,\mathcal{B})\rightarrow \mathcal{M}(N,\mathcal{B})$,
\begin{equation*}
P_t^*\mu(A) = \int_A \mu(dy)\,P_t(y,\,dz). 
\end{equation*}
For SDEs in Stratonavich form, the generator is 
\begin{equation*}
\mathcal{L} = Y_0 + \frac{1}{2}\sum_1^r Y_k^2,
\end{equation*}
and the adjoint operator is
\begin{equation*}
\mathcal{L}^* = Y_0^* + \frac{1}{2}\sum_1^r \left(Y_k^* \right)^2,
\end{equation*}
where $Y_l^*= -Y_l - \operatorname{div}Y_l$ for $l=0,1,\dotsc, r$.

Smoothing properties of the Markov semigroups are typically obtained by using H\"{o}rmander's hypoelliptic regularity theory (or parabolic regularity if the generator is elliptic).  
As mentioned in the previous section, hypoellipticity hinges on a condition known as the parabolic H\"{o}rmander condition, which concerns the vector fields $\{Y_0; Y_1,\dotsc, Y_r\}$. Note that this condition is equivalent to verifying the elliptic H\"{o}rmander's condition for $\{\partial_t-Y_0,\,Y_1,\dotsc,Y_r\}$ in $T\times N$, where $T$ denotes some open time interval; i.e., the Lie algebra generated by $\{\partial_t-Y_0,\,Y_1,\dotsc,Y_r\}$ spans the tangent space of $T\times N$ at each point. Indeed, it is clear that we can only get the vector $\partial_t$ from $\partial_t-Y_0$; hence, in order to verify the elliptic spanning condition, we need that the Lie algebra generated by $\{[\partial_t-Y_0, Y_k],\, Y_k\}_{k=1}^r$ spans the tangent space of $N$. However,
\begin{equation*}
[\partial_t-Y_0, Y_k] = -[Y_0, Y_k] \quad\mbox{for}\quad k=1,\dotsc, r,
\end{equation*}
we see it is equivalent to verify the parabolic spanning condition in $N$. In particular, the following are equivalent
\begin{enumerate}
\item $\{Y_0; Y_1,\dotsc, Y_r\}$ satisfies the parabolic H\"{o}rmander condition.
\item $\{\partial_t-Y_0,\,Y_1,\dotsc,Y_r\}$ satisfies the elliptic H\"{o}rmander condition.
\end{enumerate}

Hypoellipticity is usually checked by verifying the parabolic H\"{o}rmander condition at every point of the manifold. However, if the coefficients are analytic, then it suffices to check the conditions only on a certain subset of points; this observation is based on the following theorem:

\begin{theorem}[\cite{OR73} Lemma 2.8.1]\label{preliminarythm3}
Let $\{Y_1,\dotsc, Y_r\}$ be a collection of analytic vector fields on an open subset $U\subseteq \mathbb{R}^d$. If there exists $x_0\in U$ such that 
\begin{equation*}
\operatorname{Lie}_{x_0}\{Y_1,\dotsc, Y_r\} \simeq \mathbb{R}^k,
\end{equation*}
for some $1\leq k< d$, then there exists a neighborhood $\mathcal{V}$ of $x_0$ and a $k$-dimensional analytic manifold $V$ defined in $\mathcal{V}$ such that $x_0\in V$ and for all $x$
\begin{equation*}
\operatorname{Lie}_x\{Y_1,\dotsc, Y_r\} \subseteq T_xV, \quad\forall x\in V \cap \mathcal{V}.
\end{equation*}
\end{theorem}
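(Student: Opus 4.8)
The plan is to read this statement as (essentially) Nagano's analytic integrability theorem. Write $\mathfrak g=\operatorname{Lie}\{Y_1,\dots,Y_r\}$ for the Lie algebra generated by the $Y_i$ inside the real Lie algebra of analytic vector fields on $U$, and for $x\in U$ let $\mathfrak g(x)=\{Z(x):Z\in\mathfrak g\}\subseteq T_x\Rd$ be its pointwise evaluation, a finite-dimensional subspace; by hypothesis $\dim\mathfrak g(x_0)=k$, and $\operatorname{Lie}_x\{Y_1,\dots,Y_r\}$ in the statement denotes $\mathfrak g(x)$. After shrinking to a small ball $\mathcal V_0\ni x_0$ on which the flows used below exist for short times, the strategy is: (i) pick $X_1,\dots,X_k\in\mathfrak g$ with $X_1(x_0),\dots,X_k(x_0)$ a basis of $\mathfrak g(x_0)$ — possible since $\mathfrak g(x_0)$ is spanned by values at $x_0$ of iterated brackets of the $Y_i$; (ii) show that $\mathfrak g$ is invariant under the flows of its own elements; (iii) produce $V$ as the image of the map built from the flows of $X_1,\dots,X_k$, and verify using (ii) that $\mathfrak g$ is tangent to $V$.

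The heart of the argument — and \emph{the only place analyticity is genuinely used} — is step (ii): for $X\in\mathfrak g$ with local flow $\phi^X_t$, one has $d(\phi^X_t)_q\big(\mathfrak g(q)\big)=\mathfrak g\big(\phi^X_t(q)\big)$ for $q\in\mathcal V_0$ and $|t|$ small. To prove this, fix $q$ and $Y\in\mathfrak g$, set $Y_t:=(\phi^X_{-t})_*Y$, a time-dependent analytic vector field with $\tfrac{d}{dt}Y_t=(\phi^X_{-t})_*[X,Y]$, so that $\tfrac{d^n}{dt^n}Y_t\big|_{t=0}=(\operatorname{ad}X)^nY\in\mathfrak g$. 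By standard regularity for ODEs with analytic data, $t\mapsto Y_t(q)\in T_q\Rd$ is real-analytic near $0$, and all its Taylor coefficients $\big((\operatorname{ad}X)^nY\big)(q)$ lie in the fixed finite-dimensional subspace $\mathfrak g(q)$; since an analytic curve whose Taylor coefficients lie in a linear subspace stays in that subspace, $Y_t(q)\in\mathfrak g(q)$, i.e. $Y\big(\phi^X_t(q)\big)\in d(\phi^X_t)_q\big(\mathfrak g(q)\big)$. Letting $Y$ range over $\mathfrak g$ gives $\mathfrak g\big(\phi^X_t(q)\big)\subseteq d(\phi^X_t)_q\big(\mathfrak g(q)\big)$, and running the same argument from base point $\phi^X_t(q)$ with time $-t$ yields the reverse inclusion; in particular $\dim\mathfrak g$ is constant along flow lines of elements of $\mathfrak g$. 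I expect this to be the main step to write out carefully: in the merely $C^\infty$ category both the Lie-series control and the rank-constancy fail, so one cannot even make sense of $V$ having the correct dimension — already $Y_1=x\,\partial_y$ on $\Rd$ (analytic but singular at $x=0$) shows the rank must be allowed to jump off the orbit of $x_0$.

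Finally, define $\Psi(t_1,\dots,t_k)=\phi^{X_1}_{t_1}\circ\cdots\circ\phi^{X_k}_{t_k}(x_0)$, which is analytic with $\Psi(0)=x_0$ and $d\Psi_0(e_i)=X_i(x_0)$, hence an analytic immersion with $d\Psi_0$ mapping onto $\mathfrak g(x_0)$; for $\eps>0$ small, $V:=\Psi(\{|t|<\eps\})$ is an embedded $k$-dimensional analytic submanifold through $x_0$, and we choose $\mathcal V\subseteq\mathcal V_0$ a neighborhood of $x_0$ small enough that $V\cap\mathcal V$ is relatively closed in $\mathcal V$. Each partial derivative $\partial_{t_i}\Psi(t)$ is $X_i$ evaluated at a point of the form $\phi^{X_i}_{t_i}\circ\cdots\circ\phi^{X_k}_{t_k}(x_0)$ and then transported by the differentials of $\phi^{X_{i-1}}_{t_{i-1}},\dots,\phi^{X_1}_{t_1}$; since $X_i(\cdot)\in\mathfrak g(\cdot)$ and each $X_j\in\mathfrak g$, repeated application of step (ii) gives $\partial_{t_i}\Psi(t)\in\mathfrak g(\Psi(t))$, so $T_{\Psi(t)}V\subseteq\mathfrak g(\Psi(t))$. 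On the other hand $\Psi(t)$ is reached from $x_0$ by flows of elements of $\mathfrak g$, so the constancy from step (ii) gives $\dim\mathfrak g(\Psi(t))=\dim\mathfrak g(x_0)=k=\dim T_{\Psi(t)}V$, forcing $T_{\Psi(t)}V=\mathfrak g(\Psi(t))$; in particular $\operatorname{Lie}_x\{Y_1,\dots,Y_r\}=\mathfrak g(x)\subseteq T_xV$ for every $x\in V\cap\mathcal V$, which is the assertion. (Conceptually $V$ is just a chart on the Nagano--Sussmann orbit of $x_0$; the computation above verifies its two defining properties, dimension $k$ and tangency to $\mathfrak g$, directly.)
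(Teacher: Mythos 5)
The paper does not prove this statement at all: it is imported verbatim as Lemma~2.8.1 of [OR73] (it is a local form of the Nagano integrability theorem), so there is no ``paper proof'' to compare against. Your argument is a correct, self-contained proof along the standard Nagano lines, and you have correctly isolated the crux: the flow-invariance identity $d(\phi^X_t)_q(\mathfrak{g}(q))=\mathfrak{g}(\phi^X_t(q))$, proved by expanding $Y_t=(\phi^X_{-t})_*Y$ as a Lie series with coefficients $(\operatorname{ad}X)^nY\in\mathfrak{g}$ and using that an analytic curve whose Taylor coefficients lie in the finite-dimensional (hence closed) subspace $\mathfrak{g}(q)$ stays in it. The remaining steps check out: $\frac{d}{dt}(\phi^X_{-t})_*Y=(\phi^X_{-t})_*[X,Y]$ is the correct sign convention for the Lie derivative, the reverse inclusion from the base point $\phi^X_t(q)$ gives equality and hence constancy of $\dim\mathfrak{g}$ along orbits, $d\Psi_0(e_i)=X_i(x_0)$ makes $\Psi$ an immersion so that $V=\Psi(\{|t|<\eps\})$ is an embedded analytic $k$-manifold for small $\eps$, and the chain-rule computation of $\partial_{t_i}\Psi(t)$ combined with flow-invariance yields $T_{\Psi(t)}V=\mathfrak{g}(\Psi(t))$, which is stronger than the stated inclusion. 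Two minor points worth making explicit in a written version: (a) the analyticity of $t\mapsto Y_t(q)$ and the passage from ``all Taylor coefficients at $t=0$ lie in $\mathfrak{g}(q)$'' to ``$Y_t(q)\in\mathfrak{g}(q)$ for all $t$ in the interval'' should be closed off either by restricting to the radius of convergence (which suffices here) or by composing with a linear functional annihilating $\mathfrak{g}(q)$ and invoking the identity theorem; and (b) one should record that $\mathcal{V}$ is shrunk so that every point of $V\cap\mathcal{V}$ is of the form $\Psi(t)$, since the tangency claim is only verified at such points. Neither is a gap.
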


To apply Theorem~\ref{preliminarythm3} for studying parabolic H\"{o}rmander condition in a general setting, we recall some notions in geometry for the readers' convenience.  An analytic manifold is a smooth manifold such that the transition maps  are real analytic. Let  $\{(U_i,\,\phi_i)\}_{i\in I}$ be the charts defining the smooth structure on $N$ for some index set $I$, and let $\pi: TN\rightarrow N$ be the canonical projection such that 
\begin{equation*}
\pi(p,v) = p \quad\mbox{for}\quad (p,v)\in T_pN \quad\mbox{and}\quad p\in N.
\end{equation*} 
One can define a natural smooth structure on the tangent bundle $TN$ through the charts $\psi_i: \pi^{-1}(U)\rightarrow U\times \mathbb{R}^n$
\begin{equation*}
\psi_i: \left . \sum_{j=1}^n v^j\frac{\partial}{\partial y^j}\right\vert_p \mapsto (y^1(p),\dotsc, y^n(p), v^1, \dotsc, v^n) \quad\forall p \in N.
\end{equation*}
In this setting, a vector field $Y: N\rightarrow TN$ is a section such that $\pi\circ Y = id_N$, and it is smooth if $Y$ is a smooth map between $N$ and $TN$. Similarly, if $N$ is an analytic manifold, then so is $TN$, and the notion of analytic vector fields is then defined similarly. For more details, readers can refer to \cite{L12}, \cite{V84}.

Using Lemma \ref{preliminarythm3}, a more adaptable H\"{o}rmander condition is formulated below:
\begin{enumerate}
\item[(H1)] At each point $y\in N\setminus B$, the vector fields $\{Y_0; Y_1,\dotsc, Y_r\}$ satisfy the parabolic H\"{o}rmander condition, where $B$ is a set of points which locally lies on some $d-1$-dimensional smooth sub-manifold.

\item[(H2)] At each point $y\in B$, there exists a local chart $(\varphi,\,U)$ such that  
\begin{equation*}
\varphi(U\cap B) \subseteq  O,
\end{equation*} 
where $O$ is a $d-1$-dimensional smooth submanifold described by the zero set of some smooth function $\Phi(y_1,\dotsc, y_d)=0$ in the local coordinate. Moreover, there exists some $k=0,1\dotsc,r$ such that $Y_k\Phi\neq 0$.  
\end{enumerate}

\begin{corollary}\label{preliminarycor1}
Let $N$ be an analytic manifold. If $\{Y_0; Y_1,\dotsc, Y_r\}$ is a collection of analytic vector fields on $N$, then the following are equivalent:
\begin{enumerate}
\item The collection $\{Y_0; Y_1,\dotsc, Y_r\}$ satisfies (H1) and (H2).

\item The collection $\{Y_0; Y_1,\dotsc, Y_r\}$ satisfies the parabolic H\"{o}rmander condition. 
\end{enumerate}
\end{corollary}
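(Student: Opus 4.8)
\textbf{Proof proposal for Corollary \ref{preliminarycor1}.}

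The plan is to prove the two implications separately, with the substantive content lying in the direction (1) $\Rightarrow$ (2). For the easy direction (2) $\Rightarrow$ (1): if the parabolic H\"ormander condition holds at every point of $N$, then it holds in particular off any prescribed submanifold $B$, so (H1) is immediate with any choice of $B$; and since the full spanning condition holds on $B$ itself, there is no genuine constraint to check for (H2) — one may take $B = \emptyset$, or if one insists on a nonempty $B$, the spanning forces some $Y_k$ to be transverse to any $(d-1)$-dimensional submanifold through a given point, giving $Y_k\Phi \neq 0$. So this direction is essentially a tautology once the roles of the sets are unwound.

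For (1) $\Rightarrow$ (2), I would argue by contradiction. Suppose (H1) and (H2) hold but the parabolic H\"ormander condition fails at some point $y_* \in N$. Recall from the discussion preceding the corollary that the parabolic H\"ormander condition for $\{Y_0; Y_1,\dots,Y_r\}$ on $N$ is equivalent to the elliptic H\"ormander condition for the family $\mathcal{Y} := \{\partial_t - Y_0,\, Y_1,\dots,Y_r\}$ on $T \times N$, where $T$ is an open time interval; these are analytic vector fields on the analytic manifold $T\times N$. Failure at $y_*$ means $\operatorname{Lie}_{(t_*,y_*)}\mathcal{Y} \subsetneq T_{(t_*,y_*)}(T\times N)$ for every $t_*$, i.e. the Lie algebra has dimension $k < d+1$ at that point (and, by lower semicontinuity of the dimension of an analytic distribution, $\leq k$ on a neighborhood, with the bad locus being the closed analytic set where the dimension drops). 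Apply Theorem \ref{preliminarythm3} (the \cite{OR73} lemma) in a chart near $(t_*,y_*)$: there is a neighborhood $\mathcal{V}$ and a $k$-dimensional analytic integral manifold $V \ni (t_*,y_*)$ with $\operatorname{Lie}_x\mathcal{Y} \subseteq T_xV$ for all $x \in V\cap\mathcal{V}$. Projecting out the time direction (note $\partial_t - Y_0 \in \operatorname{Lie}\mathcal{Y}$, so $V$ is "vertical enough" to push down to a proper analytic submanifold $V' \subseteq N$ of dimension $\leq k \leq d-1$ near $y_*$ on which all the original brackets are tangent), we obtain a proper analytic — hence smooth — submanifold of $N$ through $y_*$ that is invariant under the Lie algebra generated by $\{Y_0; Y_1,\dots,Y_r\}$.

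Now I derive a contradiction with (H1) and (H2). By (H1), the full parabolic spanning holds off the set $B$; since $V'$ is a proper submanifold, it cannot be contained in $N\setminus B$ near $y_*$ (otherwise spanning would hold on an open subset of $V'$, contradicting $\dim V' \leq d-1$), so $y_*$ — or a nearby point of $V'$, after shrinking — must lie in $B$. Thus $V'$ is forced to sit inside $B$ locally, i.e. the invariant analytic submanifold coincides (locally, up to the analytic continuation argument forcing equality of dimensions) with the $(d-1)$-dimensional submanifold $O$ from (H2). But (H2) provides some $k \in \{0,1,\dots,r\}$ with $Y_k\Phi \neq 0$ at $y_*$, where $\Phi$ is a defining function for $O$; this says precisely that $Y_k$ is transverse to $O$ at $y_*$, contradicting the tangency $Y_k(y_*) \in T_{y_*}V' = T_{y_*}O$. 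This contradiction shows the parabolic H\"ormander condition cannot fail anywhere, proving (2).

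The main obstacle I anticipate is the bookkeeping in the contradiction step: one must be careful that the analytic integral manifold produced by Theorem \ref{preliminarythm3} genuinely has dimension $\leq d-1$ after projecting away the time variable (using that $\partial_t - Y_0$ is always in the Lie algebra, so the time direction is never "lost"), and that its local position relative to $B$ is pinned down — the subtlety is that $B$ is only assumed to lie \emph{inside} some $(d-1)$-submanifold, so one needs the analyticity to upgrade "$V'$ meets $B$" to "$V'$ is locally contained in the ambient $(d-1)$-submanifold $O$," which is where the dimension count ($\dim V' \leq d-1 = \dim O$) together with the inclusion $V' \cap (N\setminus B)$ being empty near $y_*$ does the work. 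A secondary technical point is making precise the sense in which (H2)'s condition "$Y_k\Phi \neq 0$" rules out tangency: this is immediate pointwise, but one should confirm it holds at the specific point of $V'$ we land on after the shrinking arguments, which follows since (H2) is assumed at \emph{every} point of $B$.
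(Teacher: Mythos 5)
Your overall skeleton matches the paper's: the easy direction is dispatched the same way, and for (1) $\Rightarrow$ (2) you correctly lift to $T\times N$, invoke Theorem~\ref{preliminarythm3} to produce a $k$-dimensional analytic integral manifold $V$ with $\operatorname{Lie}\subseteq TV$, and aim for a contradiction using (H1) and (H2). However, there is a genuine gap in your projection step. Failure of the parabolic condition at $y_*$ only gives $k\leq d$, not $k\leq d-1$ as you assert, and the image $\pi(V)$ of a $d$-dimensional $V\subseteq T\times N$ under the projection to $N$ can a priori be \emph{open} in $N$ (this happens when $Y_0(y_*)$ does not lie in the span $W_{y_*}$ of the brackets and $\dim W_{y_*}=d-1$; the "vertical enough" heuristic does not rule this out, since $\partial_t-Y_0\in TV$ does not imply $\partial_t\in TV$). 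Your subsequent step "$V'$ cannot meet $N\setminus B$, since spanning would contradict $\dim V'\leq d-1$" therefore rests on an unjustified bound and is circular as written. The repair is to never project: if some $(t,y)\in V\cap\mathcal{V}$ has $y\notin B$, then (H1) gives $\operatorname{Lie}_{(t,y)}\simeq\mathbb{R}^{d+1}$, which cannot sit inside $T_{(t,y)}V\simeq\mathbb{R}^k$ with $k\leq d$; this forces $\pi(V\cap\mathcal{V})\subseteq B\subseteq\{\Phi=0\}$ directly, after which your transversality contradiction with (H2) (namely $Z_k\in TV$ forces $Y_k\Phi(y_0)=0$) goes through.

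It is worth noting that, once repaired, your endgame differs from the paper's in an interesting way: the paper applies (H2) \emph{first}, using the field $Z_k$ with $Z_k\Psi\neq0$ to flow inside $V$ off the set $\{\Psi=0\}$, and then invokes (H1) at the exit point; you invoke (H1) first to trap $V$ over $\{\Phi=0\}$ and then contradict (H2) by tangency. Both are valid and use the same two hypotheses, just in the opposite order; yours avoids the integral-curve construction at the cost of the containment argument above. A minor side error: lower semicontinuity of the dimension of $\operatorname{Lie}_x$ gives dimension $\geq k$ near the base point, not $\leq k$ as you wrote, though this parenthetical is not used in your argument.
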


\begin{proof}
Parabolic H\"{o}rmander condition implies (H1) and (H2) is immediate, so we focus on the other direction. As we discussed in the beginning of this section, the collection $\{Y_0; Y_1,\dotsc, Y_r\}$ satisfying the parabolic H\"{o}rmander condition on $N$ is equivalent to $\{\partial_t - Y_0, Y_1, \dotsc, Y_r\}$ satisfying the elliptic H\"{o}rmander condition on $T\times N$. For convenience, we denote $Z_0 = \partial_t - Y_0$ znd $Z_k = Y_k$ for $k=1,\dotsc, r$.

Given $p_0\in B$, there exists a neighborhood $U\subseteq N$ of $p_0$, and a coordinate chart $\psi: T\times U \rightarrow T\times\mathbb{R}^d$ defined as
$\psi(t, p) = (t, \varphi(p))$
for $(t, p)\in T\times U$
such that
\begin{equation*}
\psi\left(T\times \{p_0\}\right) \subseteq T\times \left\{ y \in \varphi(U): \Phi(y) = 0\right\},
\end{equation*}
where $\Phi: \mathbb{R}^d\rightarrow\mathbb{R}$ is the smooth function described in (H2). By defining $\Psi: \psi(T\times U)\rightarrow \mathbb{R}$ via
\begin{equation*}
\Psi(t, y) = \Phi(y),
\end{equation*}
we have that
\begin{equation*}
T\times \left\{ y \in \varphi(U): \Phi(y) = 0\right\} = \left\{(t,y)\in \psi(T\times U): \Psi(t, y) = 0\right\}.
\end{equation*}

Suppose by contradiction that
\begin{equation*}
\operatorname{Lie}_{(t_0, y_0)}\left\{ Z_0, Z_1, \dotsc, Z_r\right\} \simeq \mathbb{R}^k, \quad 1\leq k < d+1
\end{equation*}
for some $(t_0,y_0)\in \psi\left(T\times\{p_0\}\right)$; here, we abuse notations and use the same symbols for the vector fields in local coordinates. By Theorem~\ref{preliminarythm3}, there exists a $k$-dimensional analytic manifold $V\subseteq \psi\left( T\times U\right)$ such that $(t_0,y_0)\in V$, and
\begin{equation}\label{preliminaryeq3}
\operatorname{Lie}_{(t, y)}\left\{ Z_0, Z_1, \dotsc, Z_r\right\} \subseteq T_{(t,y)}V, \quad\forall (t,y)\in V.
\end{equation}
By (H2), there exists a vector field $Y_k$, $k\in\{0,\dotsc, r\}$, such that $Y_k\Phi(t_0, y_0)\neq 0$, which further implies that
\begin{equation*}
Z_k\Psi(t_0,y_0) = Y_k\Phi(y_0) \neq 0.
\end{equation*}
Hence, the integral curve defined by the following initial value problem:
\begin{align*}
\dot{z}(s) &= Z_k(z(s)), \\[3pt]
z(0) &= (t_0,y_0)
\end{align*}
cannot lie in $\left\{\Psi(t, y) = 0\right\}$; in particular, there exists $\epsilon_1> 0$ such that for all $0< s < \epsilon_1$, $z(s) \not\in \left\{\Psi(t,y) = 0\right\}$. By (\ref{preliminaryeq3}), there exists $\epsilon_2>0$ such that for all $0< s< \epsilon_2$, $z(s) \in V$. Hence, 
\begin{equation*}
z(s) \in V \cap \left\{ \Psi(t,y) = 0\right\}^c
\end{equation*}
for $0 < s < \min\{\epsilon_1, \epsilon_2\}$. However, by (H1)
\begin{equation*}
\operatorname{Lie}_z\left\{ Z_0, Z_1, \dotsc, Z_r\right\} \simeq \mathbb{R}^{d+1} \quad\forall z \in \left\{ \Psi(t,y) = 0\right\}^c,
\end{equation*}
this leads to a contradiction.
\end{proof}

To illustrate the use of Corollary~\ref{preliminarycor1}, let us consider a family of vector fields $\{Y_0; Y_1\}$ in $\mathbb{R}^2$, where $Y_0 = y^n\partial_x$ and $Y_1 = \partial_y$ for some $n\in\mathbb{N}$. A routine calculation shows that 
\begin{equation*}
\left[Y_1,Y_0\right] = ny^{n-1}\partial_x,
\end{equation*}
and we observe that $\left\{Y_1, [Y_1, Y_0]\right\}$ spans $T_{(x,y)}\mathbb{R}^2$ except when $y=0$. To explicitly verify that $\{Y_0; Y_1\}$ satisfies the parabolic H\"{o}rmander condition in $\mathbb{R}^2$, one needs to compute at least $n$ brackets. However, by using conditions (H1) and (H2), one can see that the parabolic H\"{o}rmander condition is true immediately after computing $[Y_1,Y_0]$. Adopting the notation from (H1) and (H2),
\begin{equation*}
B = \left\{ (x,y)\in\mathbb{R}^2: y = 0\right\},
\end{equation*}
and thus $B$ lies within the zeroes of $\Phi(x,y) = y$. Since $(Y_1\Phi)(x, 0) = 1 \neq 0$, conditions (H1) and (H2) are satisfied. Consequently, we conclude that the family $\{Y_0; Y_1\}$ satisfies the parabolic H\"{o}rmander condition by Corollary~\ref{preliminarycor1}.

Irreducibility is usually reduced to studying certain control problems via the Stroock-Varadhan support theorem \cite{SV72}. Before we proceed, let us establish some terminologies from geometric control theory following \cite{J97}. Given a family of vector fields $\mathcal{F}$, a continuous curve $\eta: [0,t]\rightarrow N$ is an integral curve of $\mathcal{F}$, if there exists a partition $0= t_0 < t_1 <\dotsc<t_m = t$ and vector fields $V_1,\dotsc, V_m$ in $\mathcal{F}$ such that
\begin{equation*}
\dot{\eta}(t) = V_i(\eta(t)) \quad\mbox{for}\quad t_{i-1} < t < t_i, \quad i=1,\dotsc, m.
\end{equation*}
We denote $\mathcal{A}_{\mathcal{F}}(y, t)$  the reachable set of $\mathcal{F}$ with initial $y$ at time $t$, to be the union of all the values of the integral curves of $\mathcal{F}$ starting from $y$ at time $t$. We also define
\begin{equation*}
\mathcal{A}_{\mathcal{F}}(y, \leq T) = \bigcup_{t\leq T}\mathcal{A}_{\mathcal{F}}(y, t) \quad\mbox{and}\quad \mathcal{A}_{\mathcal{F}}(B, t) = \bigcup_{y\in B} \mathcal{A}_{\mathcal{F}}(y, t) \quad\mbox{for}\quad B\subseteq N.
\end{equation*} 
For the accessible set of the control problem associated with the diffusion process by the Stroock-Varadhan support theorem, we have the family of vector fields 
\begin{equation*}
\mathcal{F}_0=Y_0+\mathcal{Y} \quad\mbox{and}\quad \mathcal{Y}=\mbox{span}\{Y_1,\dotsc, Y_r\}.
\end{equation*} 
Let
\begin{equation*}
\phi_{t,y}^{\alpha} = y + \int_0^t Y_0(\phi_{s,y}^{\alpha})\,ds + \sum_{k=1}^r\int_0^t Y_k(\phi_{s,y}^{\alpha})\,\alpha_k(s)ds,
\end{equation*}
where $\mathbf{\alpha}(s)=(\alpha_1,\dotsc,\alpha_r):\mathbb{R}\rightarrow \mathbb{R}^r$ is a piecewise constant control with finite partition. The reachable set is then
\begin{equation*}
\mathcal{A}_{\mathcal{F}_0}(y, t) = \bigcup_{\alpha} \phi_{t,y}^{\alpha}.
\end{equation*}

\begin{theorem}[Stroock-Varadhan support theorem; \cite{SV72}]
If there exists $t>0$ such that 
\begin{equation*}
\mathcal{A}_{\mathcal{F}_0}(y, t) = N \quad\forall y\in N,
\end{equation*}
then for each $y\in N$ and every $U$ open in $N$,
\begin{equation*}
P_t(y,\, U)>0.
\end{equation*}
\end{theorem}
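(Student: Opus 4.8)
The plan is to derive the stated transition-kernel positivity from the path-space version of the support theorem, so that essentially all the work lies in showing that every admissible control trajectory belongs to the topological support of the diffusion's law. Write $\mathcal{P}_y$ for the law of $(y_s)_{0\le s\le t}$ on $C([0,t];N)$ started at $y$. The key claim is the \emph{tube estimate}: for every piecewise-constant control $\alpha$ with finite partition and every $\delta>0$,
\begin{equation}\label{eq:tube}
\mathbf{P}\Big(\sup_{0\le s\le t} d\big(y_s,\phi_{s,y}^{\alpha}\big) < \delta\Big) > 0.
\end{equation}
Granting \eqref{eq:tube}, the theorem follows quickly: fix a nonempty open $U\subseteq N$ and a point $z\in U$; by hypothesis $z\in\mathcal{A}_{\mathcal{F}_0}(y,t)$, so there is an admissible $\alpha$ with $\phi_{t,y}^{\alpha}=z$, and taking $\delta$ small enough that $B(z,\delta)\subseteq U$ yields $P_t(y,U)=\mathbf{P}(y_t\in U)\ge \mathbf{P}\big(\sup_{s\le t}d(y_s,\phi_{s,y}^{\alpha})<\delta\big)>0$.

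To prove \eqref{eq:tube} I would first localize and concatenate: cover the compact arc $\phi_{\cdot,y}^{\alpha}([0,t])$ by finitely many coordinate charts, refine the partition of $\alpha$ so that on each subinterval $[s_{i-1},s_i]$ the control trajectory stays well inside a single chart, and reduce \eqref{eq:tube} to a product of positive-probability tube events over the subintervals via the Markov property — the tube radius on $[s_{i-1},s_i]$ chosen, by a compactness argument, small enough that remaining in the tube keeps the process inside that chart. Within a chart the control is constant, so $\phi^{\alpha}$ solves the fixed autonomous ODE $\dot\phi = Y_0(\phi)+\sum_k\alpha_kY_k(\phi)$, and one may work in $\mathbb{R}^d$ with bounded, globally Lipschitz vector fields (cut off outside a neighborhood, or isometrically embed $N\hookrightarrow\mathbb{R}^m$).

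The heart of the argument is the single-subinterval tube estimate, for which the standard route is a change of drift followed by a small-ball estimate. Shifting $W$ by $h(s)=\int_0^s\alpha(r)\,dr$ via Girsanov produces a probability measure $\mathbf{Q}$ equivalent to $\mathbf{P}$ (since $\alpha$ is bounded) under which $y$ solves $dy = \big(Y_0+\sum_k\alpha_kY_k\big)(y)\,ds + \sum_k Y_k(y)\circ d\widehat W^k$ with $\widehat W$ an ordinary Brownian motion; thus the deterministic part of the equation for $y$ under $\mathbf{Q}$ is exactly the ODE governing $\phi^{\alpha}$. It then remains to show that this SDE stays uniformly within $\delta$ of $\phi^{\alpha}$ on the subinterval with positive $\mathbf{Q}$-probability: converting to It\^o form, applying Gronwall to $|y_s-\phi_s^{\alpha}|$, and invoking positivity of small-ball probabilities for continuous martingales (time-changed Brownian motions) gives this, with the Stratonovich correction $\tfrac12\sum_k(\nabla Y_k)Y_k$ kept small by taking the subinterval short. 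Absolute continuity of $\mathbf{Q}$ with respect to $\mathbf{P}$ then transfers the estimate back, and gluing over subintervals yields \eqref{eq:tube}. (Equivalently, one can run a Wong–Zakai approximation directly: the ODE solution driven by the piecewise-linear interpolation of $W$ converges uniformly in probability to $y$, with \emph{no} correction term precisely because the equation is written in Stratonovich form, and the event that this interpolation is uniformly close to $h$ has positive probability by Cameron–Martin.)

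I expect the main obstacle to be this quantitative single-subinterval estimate — reconciling the Wong–Zakai/small-ball positivity with a deterministic tube of prescribed radius while simultaneously controlling the Stratonovich correction and the martingale part. It is here that the Stratonovich formulation is essential: in It\^o form the control system attached to the SDE would instead be $\dot\phi = Y_0(\phi)+\tfrac12\sum_k(\nabla Y_k)Y_k(\phi)+\sum_k\alpha_kY_k(\phi)$, not the system $\mathcal{F}_0=Y_0+\operatorname{span}\{Y_1,\dots,Y_r\}$ appearing in the statement. The manifold setting contributes only bookkeeping — charts, concatenation by the Markov property, and finite-dimensionality and compactness of $N$ — not genuine difficulty.
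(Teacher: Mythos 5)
The paper does not actually prove this statement: it is quoted from \cite{SV72} and used as a black box, so there is no in-paper argument to compare against. Your overall architecture — reduce $P_t(y,U)>0$ to a tube estimate around a control trajectory, localize in charts, concatenate via the Markov property — is the standard and correct skeleton of the classical proof, and your reduction of the theorem to the tube estimate is fine.

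However, your main route to the single-subinterval tube estimate has a genuine gap. After the Girsanov shift, the It\^o form of the equation under $\mathbf{Q}$ is
\begin{equation*}
dy_s=\Big(Y_0+\sum_k\alpha_kY_k+\tfrac12\sum_k(\nabla Y_k)Y_k\Big)(y_s)\,ds+\sum_kY_k(y_s)\,d\widehat W^k_s,
\end{equation*}
so Gronwall applied to $|y_s-\phi^{\alpha}_s|$ on the event $\{\sup_s|M_s|<\delta'\}$, where $M$ is the martingale part, controls the distance to the flow of the \emph{corrected} drift $Y_0+\sum\alpha_kY_k+\tfrac12\sum(\nabla Y_k)Y_k$, not to $\phi^{\alpha}$. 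The correction is an $O(1)$ vector field; its cumulative effect over $[0,t]$ is of order $t$ and is not reduced by refining the partition — over $\sim t/h$ subintervals the $O(h)$ per-interval contributions sum back to $O(t)$. Worse, the event you propose is the wrong one: on the correct conditioning event $\{\|\widehat W\|_\infty<\epsilon\}$ the martingale part is typically \emph{not} small — it is exactly what cancels the correction. For instance, for $dy=y\circ dW$ on $\mathbb{R}$ one has $y_s=y_0e^{W_s}$ and the zero-control skeleton is constant; on $\{\|W\|_\infty<\epsilon\}$ indeed $y\approx y_0$, but $M_s=\int_0^sy_r\,dW_r=y_s-y_0-\tfrac12\int_0^sy_r\,dr\approx-\tfrac12y_0s$, of order $t$ at the endpoint. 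The cancellation is precisely the content of the Stroock--Varadhan conditioning lemma (that $\int_0^{\cdot}f(y)\circ dW$ is uniformly small with high conditional probability given $\|W\|_\infty\le\epsilon$), equivalently a quantitative conditional Wong--Zakai statement. Your parenthetical alternative is the right idea, but that is exactly where the work lies and it is not supplied; the Girsanov-plus-small-ball-for-the-martingale argument you present as the main route does not close.
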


An important consequence of $\{Y_0;\,Y_1,\dotsc, Y_r\}$ satisfying the parabolic H\"ormander condition is that the accessible set $\mathcal{A}_{\mathcal{F}_0}(y, t)$ has a non-empty interior for $t>0$; see Proposition~5.10 of \cite{BBPS22} and Theorem~3 in Section~3.1 of \cite{J97}. By an argument as in the proof of Theorem~13 in Section~3.4 of \cite{J97}, we then only need to check 
\begin{equation}\label{preliminaryeq2}
\overline{\mathcal{A}_{\mathcal{F}_0}(y, t)} = N \quad\forall y\in N.
\end{equation}

If there are certain directions along which the speed of the process is uniformly bounded, condition (\ref{preliminaryeq2}) can be tricky to verify. This is the case for our charged particles example, since the magnetic field does not change the kinetic energy. 
To verify the irreducibility for the associated projective process, we use a two-step strategy to separate the control problems for the base manifold and the unit tangent space; see Section~\ref{irreducibility} for details.

\section{Proof of Theorem~\ref{maintheorem} and Corollary~\ref{cor:NewNoise}}\label{sec:Mainresult}
In this section, we present the proof for the quantitative dichotomy, Theorem~\ref{maintheorem}, where uniqueness of the stationary density plays an important role. 

\begin{proof}
 Regarding the dichotomy Theorem~\ref{maintheorem} of the parameterized SDEs, we first show that if $\lambda_1^{\epsilon_0}=0$ for some $\epsilon_0>0$, then $\lambda_1^{\epsilon}=0$ for all $\epsilon>0$. Indeed, via (\ref{preliminaryeq1}), 
\begin{equation*}
0 =  \tilde{X}_k^*f^{\epsilon_0} \quad\mbox{for}\quad k=0,1,\dotsc, r.
\end{equation*}
In particular, this shows
\begin{equation*}
0 = \tilde{X}_0^*\,f^{\epsilon_0} + \frac{\epsilon}{2}(\tilde{X}_k^*)^2\,f^{\epsilon_0} \quad\forall \epsilon>0,
\end{equation*}
which due to unique ergodicity implies
\begin{equation*}
f^{\epsilon_0} = f^{\epsilon} \quad\forall \epsilon>0.
\end{equation*}
Therefore,
\begin{equation*}
\tilde{X}_k^*f^{\epsilon} = \tilde{X}_k^*f^{\epsilon_0} = 0, \quad \epsilon>0 \quad\mbox{and}\quad k=0,1,\dotsc, r \quad\Rightarrow\quad \lambda_1^{\epsilon}=0 \quad\forall \epsilon>0.
\end{equation*}
This shows that $\lambda_1^{\epsilon_0} = 0$ for some $\epsilon_0>0$ implies $\lambda_1^{\epsilon} = 0$ for all $\epsilon>0$. 

Now suppose $\lambda_1^{\epsilon}>0$ for all $\epsilon>0$, but that 
\begin{equation*}
\liminf_{\epsilon\rightarrow 0} \frac{\lambda_1^{\epsilon}}{\epsilon} = 0.
\end{equation*}
Then we may pick a subsequence $\{\epsilon_n\}_n$ such that $\lim_{n\rightarrow\infty}\epsilon_n = 0$, and
\begin{equation*}
\lim_{n\rightarrow\infty} \frac{\lambda_1^{\epsilon_n}}{\epsilon_n} = 0;
\end{equation*}
our goal is to show that this hypothesis will lead to $\lambda_1^{\epsilon}=0$ for all $\epsilon>0$, a contradiction.

By Theorem~\ref{preliminarythm1}, we have
\begin{equation}\label{mainresulteq1}
0 = \lim_{n\rightarrow\infty} \frac{d\lambda_1^{\epsilon_n}}{\epsilon_n} = \lim_{n\rightarrow\infty}\frac{1}{2}\sum_{k=1}^r  \int_{\mathbb{S}M} \frac{\vert\tilde{X}_k^* f^{\epsilon_n}\vert^2}{f^{\epsilon_n}}\,dw,
\end{equation}
which implies $\tilde{X}_k^*f^{\epsilon_n}/\sqrt{f^{\epsilon_n}}$ converges strongly in $L^2(dw)$ to $0$. Also, Theorem~\ref{preliminarythm2} implies that 
\begin{equation*}
\lim_{n\rightarrow\infty} \int_{\mathbb{S}M} \vert f^{\epsilon_n}-f^0\vert \,dw = 0,
\end{equation*}
for some invariant density of the zero-noise dynamics $f^0\in L^1(dw)$, up to extracting a further subsequence of $\{f^{\epsilon_n}\}$. 

We claim that 
\begin{equation*}
\tilde{X}_k^*f^0 = 0 \quad\forall k=0,1,\dotsc, r,
\end{equation*}
in the weak sense. For $k=0$, it is automatic, since $f^0$ is a stationary density for the zero-noise flow; our focus will be on $k=1,\dotsc, r$. Let $\phi\in C^{\infty}_0(SM)$,
\begin{equation*}
\int_{\mathbb{S}M} \phi\,\tilde{X}_k^*f^0\,dw = \int_{\mathbb{S}M} \phi\,\tilde{X}_k^*(f^0-f^{\epsilon_n})\,dw + \int_{\mathbb{S}M} \phi\,\tilde{X}_k^*f^{\epsilon_n}\,dw.
\end{equation*}
The first term goes to $0$ as $n\rightarrow\infty$ due to the $L^1(dw)$ convergence,
\begin{equation*}
\left\vert\int_{\mathbb{S}M} \phi\,\tilde{X}_k^*(f^0-f^{\epsilon_n})\,dw\right\vert = \left\vert\int_{\mathbb{S}M} \tilde{X}_k\phi\,(f^0-f^{\epsilon_n})\,dw\right\vert \leq \Vert\tilde{X}_k\phi\Vert_{L^{\infty}(dw)}\,\Vert f^0 - f^{\epsilon_n}\Vert_{L^1(dw)}.
\end{equation*}
For the second term. 
\begin{align*}
\left\vert\int_{\mathbb{S}M} \phi\,\tilde{X}_k^*f^{\epsilon_n}\,dw\right\vert = \left\vert\int_{\mathbb{S}M} \phi\,\sqrt{f^{\epsilon_n}}\,\frac{\tilde{X}_k^*f^{\epsilon_n}}{\sqrt{f^{\epsilon_n}}}\,dw\right\vert &\leq \left(\int_{\mathbb{S}M} \phi^2\,f^{\epsilon_n}dw\right)^{1/2}\,\left(\int_{\mathbb{S}M}\frac{\vert\tilde{X}_k^*f^{\epsilon_n}\vert^2}{f^{\epsilon_n}}\,dw\right)^{1/2} \\[3pt]
&\leq \Vert\phi\Vert_{L^{\infty}(dw)}\,\Vert \tilde{X}_k^*f^{\epsilon_n}/\sqrt{f^{\epsilon_n}}\Vert_{L^2(dw)},
\end{align*}
which converges to $0$ due to our hypothesis (\ref{mainresulteq1}).

Finally, we demonstrate how the above leads to a contradiction. Under our hypothesis (\ref{mainresulteq1}), we deduced
\begin{equation*}
\tilde{X}_k^*f^0 = 0 \quad\mbox{for}\quad k=0,1,\dotsc, r,
\end{equation*}
which implies
\begin{equation*}
\mathcal{L}^* f^0 = \tilde{X}_0^*f^0 + \frac{1}{2}\sum_{k=1}^r \left(\tilde{X}_k^*\right)^2f^0 = 0.
\end{equation*}
By unique ergodicity, this implies $f^1=f^0$, and
\begin{equation*}
0 = FI(f^0) = FI(f^1),
\end{equation*}
which is a contradiction. 
\end{proof}

Now we prove Corollary \ref{cor:NewNoise}. 
\begin{proof}
By contradiction, we assume that $\lambda_1(x) > 0$ and $\lambda_1(y) = 0$. Denote $f: \mathbb{S}M\rightarrow \mathbb{R}$ and $g: \mathbb{S}M\rightarrow \mathbb{R}$ the stationary densities for the projective processes associated with $x_t$ and $y_t$ respectively. Using Fisher information formula, one can see that
\begin{align*}
\lambda_1(y) = 0 &\quad\Leftrightarrow\quad \tilde{X}_k^*g = 0 \quad\mbox{for}\quad k = 0,1,\dotsc, r+1. \\[3pt]
&\quad\Rightarrow\quad \tilde{X}_k^*g = 0 \quad\mbox{for}\quad k = 0,1,\dotsc, r. \\[3pt]
&\quad\Rightarrow\quad \left[ \tilde{X}_0^* + \frac{1}{2}\sum_{k=1}^r\left(\tilde{X}_k^*\right)^2\right] g = 0. 
\end{align*}
This shows that $g$ solves the same Kolmogorov equation that $f$ solves; under assumptions (H) and (I), such density is unique, so $f=g$. This contradicts that $\lambda_1(x) >0 $. 
\end{proof}

%\section{Some notes on hypoellipticity and irreducibility}\label{sec:Notes}
%\input{notes.tex}

\section{An application to positive Lyapunov exponent in electrodynamics}\label{sec:Application}
%\subsection{Chaos}\label{shearchaos}
In this section, we apply Theorem~\ref{maintheorem} to prove our result about the electrodynamics under fluctuating magnetic fields, Theorem~\ref{application}. We will show that $\lambda_1^{\epsilon} > 0$ for all $\epsilon>0$, assuming hypoellipticity and irreducibility; the verification of hypoellipticity and irreducibility is postponed to Section~\ref{hypoellipticity} and Section~\ref{irreducibility}. As a consequence, we also get 
\[
\liminf_{\epsilon\rightarrow 0}\frac{\lambda_1^{\epsilon}}{\epsilon} \geq C.
\] 

A calculation shows
\begin{align}
\tilde{X}_0 &= \begin{bmatrix}
\cos{x_3}, & \sin{x_3}, & 1
\end{bmatrix}\cdot\nabla_{x} + \begin{bmatrix}
-v_3\sin{x_3} - \alpha v_1, & v_3\cos{x_3} - \alpha v_2, & -\alpha v_3)
\end{bmatrix}\cdot\nabla_{v}, \label{applicataioneq1} \\[3pt]
\tilde{X}_1 &= \begin{bmatrix}
0, & 0, & 1
\end{bmatrix}\cdot\nabla_{x}, \label{applicataioneq2} \\[3pt]
\tilde{X}_2 &= \begin{bmatrix}
0, & 0, & \sin{x_1}
\end{bmatrix}\cdot\nabla_{x} + v_1\cos{x_1}\,\begin{bmatrix}
-v_3v_1, & -v_3v_2, & 1-(v_3)^2
\end{bmatrix}\cdot\nabla_{v}, \label{applicataioneq3} \\[3pt]
\tilde{X}_3 &= \begin{bmatrix}
0, & 0, & \cos{x_1}
\end{bmatrix}\cdot\nabla_{x} - v_1\sin{x_1}\,\begin{bmatrix}
-v_3v_1, & -v_3v_2, & 1-(v_3)^2
\end{bmatrix}\cdot\nabla_{v}, \label{applicataioneq4} \\[3pt]
\tilde{X}_4 &= \begin{bmatrix}
0, & 0, & \sin{x_2}
\end{bmatrix}\cdot\nabla_{x} + v_2\cos{x_2}\,\begin{bmatrix}
-v_3v_1, & -v_3v_2, & 1-(v_3)^2
\end{bmatrix}\cdot\nabla_{v}, \label{applicataioneq5} \\[3pt]
\tilde{X}_5 &= \begin{bmatrix}
0, & 0, & \sin{x_1}
\end{bmatrix}\cdot\nabla_{x} - v_2\sin{x_2}\,\begin{bmatrix}
-v_3v_1, & -v_3v_2, & 1-(v_3)^2
\end{bmatrix}\cdot\nabla_{v}, \label{applicataioneq6}
\end{align}
where $\alpha = -v_3v_1\sin{x_3} + v_3v_2\cos{x_3}$. Also, $\mbox{div}\tilde{X}_0 = 5v_3(v_1\sin{x_3} - v_2\cos{x_3})$, $\mbox{div}\tilde{X}_1 = 0$, $\mbox{div}\tilde{X}_2 = -5v_3v_1\cos{x_1}$, $\mbox{div}\tilde{X}_3 = 5v_3v_1\sin{x_1}$, $\mbox{div}\tilde{X}_4 = -5v_3v_2\cos{x_2}$ and $\mbox{div}\tilde{X}_5 = 5v_3v_2\sin{x_2}$. By Theorem~\ref{maintheorem}, a necessary and sufficient condition for $\lambda_1^\epsilon=0$ for $\epsilon>0$ is that
\begin{multline*}
\begin{bmatrix}
\cos{x_3}, & \sin{x_3}, & 1
\end{bmatrix}\cdot\nabla_{x}g + \begin{bmatrix}
-v_3\sin{x_3} - \alpha v_1, & v_3\cos{x_3} - \alpha v_2, & -\alpha v_3)
\end{bmatrix}\cdot\nabla_{v}g \\ 
= -5v_3(v_1\sin{x_3} - v_2\cos{x_3}),
\end{multline*}
\begin{align*}
\begin{bmatrix}
0, & 0, & 1
\end{bmatrix}\cdot\nabla_{x}g &= 0, \\[3pt]
\begin{bmatrix}
0, & 0, & \sin{x_1}
\end{bmatrix}\cdot\nabla_{x}g + v_1\cos{x_1}\,\begin{bmatrix}
-v_3v_1, & -v_3v_2, & 1-(v_3)^2
\end{bmatrix}\cdot\nabla_{v}g &= 5v_3v_1\cos{x_1}, \\[3pt]
\begin{bmatrix}
0, & 0, & \cos{x_1}
\end{bmatrix}\cdot\nabla_{x}g - v_1\sin{x_1}\,\begin{bmatrix}
-v_3v_1, & -v_3v_2, & 1-(v_3)^2
\end{bmatrix}\cdot\nabla_{v}g &= -5v_3v_1\sin{x_1}, \\[3pt]
\begin{bmatrix}
0, & 0, & \sin{x_2}
\end{bmatrix}\cdot\nabla_{x}g + v_2\cos{x_2}\,\begin{bmatrix}
-v_3v_1, & -v_3v_2, & 1-(v_3)^2
\end{bmatrix}\cdot\nabla_{v}g &= 5v_3v_2\cos{x_2}, \\[3pt]
\begin{bmatrix}
0, & 0, & \sin{x_1}
\end{bmatrix}\cdot\nabla_{x}g - v_2\sin{x_2}\,\begin{bmatrix}
-v_3v_1, & -v_3v_2, & 1-(v_3)^2
\end{bmatrix}\cdot\nabla_{v}g &= -5v_3v_2\sin{x_2},
\end{align*}
where $g=\log{f}$ and $f$ is the stationary density for the projective process. The system can be further reduced to
\begin{multline*}
\begin{bmatrix}
\cos{x_3} & \sin{x_3} & 0
\end{bmatrix}\cdot\nabla_{x}g + \begin{bmatrix}
-v_3\sin{x_3} - \alpha v_1 & v_3\cos{x_3} - \alpha v_2 & -\alpha v_3)
\end{bmatrix}\cdot\nabla_{v}g \\
= -5v_3(v_1\sin{x_3} - v_2\cos{x_3}), 
\end{multline*}
\begin{align*}
\begin{bmatrix}
0 & 0 & 1
\end{bmatrix}\cdot\nabla_{x}g &= 0, \\[3pt]
\begin{bmatrix}
-v_3v_1 & -v_3v_2 & 1-(v_3)^2
\end{bmatrix}\cdot\nabla_{v}g &= 5v_3,
\end{align*}
since the second equation implies $\partial_{x_3}g = 0$. In particular, if we take $a(x_3) = (0,0,x_3, 1/\sqrt{3}, 1/\sqrt{3}, 1/\sqrt{3})$, then the equations become
\begin{multline}
\begin{bmatrix}\label{cheq1}
\cos{x_3} & \sin{x_3} & 0
\end{bmatrix}\cdot\nabla_{x}g + \frac{1}{\sqrt{3}}\,\begin{bmatrix}
-\sin{x_3} - b(x_3) & \cos{x_3} - b(x_3) & -b(x_3)
\end{bmatrix}\cdot\nabla_{v}g \\
= \frac{5}{3}\,(\cos{x_3} - \sin{x_3}), 
\end{multline}
\begin{align}
\begin{bmatrix}\label{cheq2}
0 & 0 & 1
\end{bmatrix}\cdot\nabla_{x}g &= 0, \\[3pt]
\frac{1}{3}\,\begin{bmatrix}\label{cheq3}
-1 & -1 & 2
\end{bmatrix}\cdot\nabla_{v}g &= \frac{5}{\sqrt{3}},
\end{align}
where $b(x_3) = (\cos{x_3}-\sin{x_3})/3$; moreover, 
\begin{align*}
\partial_{x_3}g = 0 \quad\Rightarrow\quad \partial_{x_3}\nabla g = \nabla\partial_{x_3} g = 0 \quad\Rightarrow\quad (\nabla g)(a(x_3)) = (\nabla g)(a(0)) \quad\mbox{for}\quad x_3\in [0,2\pi).
\end{align*}
Denote $z_0= (x_0, y_0) = (\nabla_{x} g(a(x_3)), \nabla_{v} g(a(x_3)))$ and integrate (\ref{cheq1}) along intervals $[0,\pi]$, $[-\pi/2, \pi/2]$ and $[0,\pi/3]$, 

\begin{align}
\begin{bmatrix}\label{cheq4}
0 & 2 & 0
\end{bmatrix}\cdot x_0 + \frac{1}{3\sqrt{3}}\,\begin{bmatrix}
-4 & 2 & 2
\end{bmatrix}\cdot y_0 &= -\frac{10}{3} \\[3pt]
\begin{bmatrix}\label{cheq5}
2 & 0 & 0
\end{bmatrix}\cdot x_0 + \frac{1}{3\sqrt{3}}\,\begin{bmatrix}
-2 & 4 & -2
\end{bmatrix}\cdot y_0 &= \frac{10}{3} \\[3pt]
\frac{1}{2}\,\begin{bmatrix}\label{cheq6}
\sqrt{3} & 1 & 0
\end{bmatrix}\cdot x_0 + \frac{1}{6\sqrt{3}}\,\begin{bmatrix}
-2-\sqrt{3} & 1+ 2\sqrt{3} & 1-\sqrt{3}
\end{bmatrix}\cdot y_0 &= \frac{5}{6}(\sqrt{3} -1).
\end{align}
Then (\ref{cheq3}), (\ref{cheq4}), (\ref{cheq5}) and (\ref{cheq6}) shows that $z_0$ solves a linear system $Az = b$, where
\begin{align*}
A = \begin{bmatrix}
0 & 0 & 0 & -\frac{1}{3} & -\frac{1}{3} & \frac{2}{3} \\[5pt]
0 & 2 & 0 & -\frac{4}{3\sqrt{3}} & \frac{2}{3\sqrt{3}} & \frac{2}{3\sqrt{3}} \\[5pt]
2 & 0 & 0 & -\frac{2}{3\sqrt{3}} & \frac{4}{3\sqrt{3}} & -\frac{2}{3\sqrt{3}} \\[5pt]
\frac{\sqrt{3}}{2} & \frac{1}{2} & 0 & -\frac{2+\sqrt{3}}{6\sqrt{3}} & \frac{1+2\sqrt{3}}{6\sqrt{3}} & \frac{1-\sqrt{3}}{6\sqrt{3}}
\end{bmatrix}
\quad\mbox{and}\quad
b = \begin{bmatrix}
\frac{5}{\sqrt{3}} \\[5pt]
-\frac{10}{3} \\[5pt]
\frac{10}{3} \\[5pt]
\frac{5(\sqrt{3}-1)}{6}
\end{bmatrix}.
\end{align*}
However, using Gaussian elimination, we have the row equivalence,
\begin{align*}
\begin{bmatrix}
A & b
\end{bmatrix} \sim C = \begin{bmatrix}
* & * & * & * & * & * & * \\[5pt]
* & * & * & * & * & * & * \\[5pt]
* & * & * & * & * & * & * \\[5pt]
0 & 0 & 0 & 0 & 0 & 0 & 1 \\[5pt]
\end{bmatrix},
\end{align*}
where $*$ denotes unimportant entries; this implies that the linear system $Az=b$ is inconsistent, and therefore such $z_0$ cannot exist, a contradiction to our hypothesis that $\lambda_1^{\epsilon} = 0$ for $\epsilon>0$; hence, our proof is finished.  
\subsection{Hypoellipticity} \label{hypoellipticity}

In this section, we check the parabolic H\"{o}rmander condition (H) for the projective process $w_t$; the parabolic H\"{o}rmander condition for the base process $x_t$ is clear, so we omit it. Specifically, we showed the following
\begin{theorem}\label{hypoellipticitythm1}
For $w$ in $\mathbb{ST}^3$ such that $v_3\neq\pm 1$, the following collection spans $T_{w}\mathbb{ST}^3$
\begin{equation*}
\tilde{X}_1,\,\tilde{X}_k,\, \big[\tilde{X}_1,\tilde{X}_0\big],\, \big[\tilde{X}_1,[\tilde{X}_1,\tilde{X}_0]\big],\, \big[\tilde{X}_k,[\tilde{X}_1, \tilde{X}_0]\big],\, \big[\tilde{X}_k,[\tilde{X}_1[,\tilde{X}_1, \tilde{X}_0]]\big], \qquad k=2,\dotsc,5.
\end{equation*}
\end{theorem}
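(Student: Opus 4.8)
The plan is to check directly that, at every $w=(x,v)\in\mathbb{ST}^3$ with $v_3\neq\pm1$, five of the listed vector fields are linearly independent. Since $\mathbb{ST}^3$ is the unit sphere bundle of $\mathbb{T}^3$ it has dimension $5$, and $T_w\mathbb{ST}^3$ splits into the horizontal part $\mathrm{span}\{\partial_{x_1},\partial_{x_2},\partial_{x_3}\}$ and the vertical (fiber) part $v^{\perp}\subset\mathbb{R}^3$. I will first exhibit three brackets whose horizontal components span $T_x\mathbb{T}^3$, and then two more brackets whose components, reduced modulo that $3$-dimensional span, give a basis of $v^{\perp}$.

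\emph{Horizontal directions.} Because $\tilde{X}_1=\partial_{x_3}$ has no vertical component and the horizontal part of $\tilde{X}_0$ is $(\cos x_3,\sin x_3,1)$, bracketing against $\tilde{X}_1$ simply differentiates every coefficient in $x_3$: the horizontal parts of $[\tilde{X}_1,\tilde{X}_0]$ and $[\tilde{X}_1,[\tilde{X}_1,\tilde{X}_0]]$ are $(-\sin x_3,\cos x_3,0)$ and $(-\cos x_3,-\sin x_3,0)$. Together with $\partial_{x_3}$ these span $T_x\mathbb{T}^3$ at every point, since $\{(\cos x_3,\sin x_3),(-\sin x_3,\cos x_3)\}$ is an orthonormal frame of $\mathbb{R}^2$; thus $\{\tilde{X}_1,[\tilde{X}_1,\tilde{X}_0],[\tilde{X}_1,[\tilde{X}_1,\tilde{X}_0]]\}$ spans a $3$-dimensional subspace projecting onto the full base, uniformly in $w$.

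\emph{Vertical directions.} A direct computation shows that each $\tilde{X}_k$ ($k=2,\dots,5$) equals $(\text{function of }x)\,\partial_{x_3}$ plus $(\text{function of }x,v)\cdot V$, where $V:=(\mathrm{Id}-v\otimes v)e_3$ and the scalar coefficients are $v_1\cos x_1$, $-v_1\sin x_1$, $v_2\cos x_2$, $-v_2\sin x_2$ respectively; subtracting the corresponding multiple of $\tilde{X}_1$ turns such an $\tilde{X}_k$ into the purely vertical field $(\text{scalar})\cdot V$. Since $v_3\neq\pm1$ forces $(v_1,v_2)\neq(0,0)$ and also $V\neq0$, at least one of the four coefficients is nonzero, so $V$ lies in the span. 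For the fifth direction I will compute the brackets $[\tilde{X}_k,[\tilde{X}_1,\tilde{X}_0]]$ and $[\tilde{X}_k,[\tilde{X}_1,[\tilde{X}_1,\tilde{X}_0]]]$: because the three base-spanning fields already realize every horizontal direction, one can subtract a linear combination of them from such a bracket to obtain a purely vertical residue, and then verify that this residue has a component transverse to $V$ --- equivalently along $v\times e_3=(v_2,-v_1,0)$ --- precisely when $v_3\neq\pm1$. With these five fields selected, the $5\times5$ component matrix is invertible: row-reducing against $\tilde{X}_1$ block-triangularizes it into the rotation block $\bigl(\begin{smallmatrix}-\sin x_3 & \cos x_3\\ -\cos x_3 & -\sin x_3\end{smallmatrix}\bigr)$, of determinant $1$, and a $2\times2$ vertical minor whose nonvanishing is exactly the transversality just established.

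\emph{Main obstacle.} The substantive work is the computation of the length-three and length-four brackets involving $\tilde{X}_2,\dots,\tilde{X}_5$, whose vertical coefficients are quadratic and cubic in $v$, together with the small case analysis over the loci $v_1=0$, $v_2=0$, $\cos x_i=0$, $\sin x_i=0$ needed to guarantee that, for each $w$, \emph{some} admissible choice of the fourth and fifth fields works. This is kept manageable by expressing everything in the moving frame $\{\partial_{x_1},\partial_{x_2},\partial_{x_3},\,V,\,v\times e_3\}$ and using the identities $(\mathrm{Id}-v\otimes v)v=0$, $v\cdot(v\times e_3)=0$, and $D_v[(\mathrm{Id}-v\otimes v)g]\,h=-(v\cdot g)h-(h\cdot g)v$ to control derivatives of the fiber projector; one also notes that at $v_3=\pm1$ all the listed brackets collapse into the $3$-dimensional horizontal-spanning subspace, which both explains and confirms the exclusion in the statement.
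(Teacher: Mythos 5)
Your approach is essentially the paper's own: the same frame adapted to the horizontal/vertical splitting (your $V=(\mathrm{Id}-v\otimes v)e_3$ and $v\times e_3$ are, up to normalization and sign, the paper's $e_4$ and $e_5$), the same selection of five fields ($\tilde{X}_1$, one $\tilde{X}_{k_1}$ with $v_1\partial_{x_1}B_{k_1}+v_2\partial_{x_2}B_{k_1}\neq 0$, the two iterated $\tilde{X}_1$-brackets, and one higher bracket), and the same block-triangularization against the rotation block $\bigl(\begin{smallmatrix}-\sin x_3 & \cos x_3\\ -\cos x_3 & -\sin x_3\end{smallmatrix}\bigr)$. The one step you defer is exactly the paper's computation of the frame representation of the higher brackets, whose upshot is that their components along $v\times e_3$ are proportional to $(v_1\partial_{x_1}B_k+v_2\partial_{x_2}B_k)(v_2\cos x_3-v_1\sin x_3)$ and $(v_1\partial_{x_1}B_k+v_2\partial_{x_2}B_k)(v_1\cos x_3+v_2\sin x_3)$, the two trailing factors being unable to vanish simultaneously when $(v_1,v_2)\neq(0,0)$; one small correction: at $v_3=\pm 1$ the brackets $[\tilde{X}_1,\tilde{X}_0]$ and $[\tilde{X}_1,[\tilde{X}_1,\tilde{X}_0]]$ still carry nonzero vertical components (what degenerates is the frame and the vertical parts of $\tilde{X}_2,\dotsc,\tilde{X}_5$), though this side remark does not affect the theorem, which excludes that locus.
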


%\begin{theorem}\label{hypoellipticitythm2}
%For $w$ in $\mathbb{ST}^3$ such that $x_1\neq (2m+1)\pi/2$, $x_3\neq n\pi/2$ and $v_3=\pm 1$, where $m,n\in \mathbb{Z}$, the following collection spans $T_{w}\mathbb{ST}^3$,
%\begin{equation}
%\tilde{X}_1,\, \big[\tilde{X}_1,\tilde{X}_0\big],\, \big[\tilde{X}_1,[\tilde{X}_1,\tilde{X}_0]\big],\, [\tilde{X}_0,\,[\tilde{X}_0,\,\tilde{X}_2]],\,[\tilde{X}_1,\,[\tilde{X}_0,\,[\tilde{X}_0,\,\tilde{X}_2]]].
%\end{equation}
%\end{theorem}

Before we proceed, let us explain how this is enough to show hypoellipticity. To check (H1), we see that Theorem~\ref{hypoellipticitythm1} implies that the spanning condition is checked except at
\begin{equation*}
B = \left\{w\in\mathbb{ST}^3: v_3 = \pm 1\right\},
\end{equation*}
and $B$ is contained in the hypersurfaces defined by $\left\{ v_1=0\right\}$ and $\left\{ v_2=0\right\}$. Denote $\Phi_1(x,v) = v_1$ and $\Phi_2(x,v) = v_2$. Then
\begin{align*}
\tilde{X}_0\Phi_1 &= -v_3\sin{x_3} - \alpha v_1 \\[3pt]
\tilde{X}_0\Phi_2 &= v_3\cos{x_3} - \alpha v_2,
\end{align*}
where $\alpha = -v_3v_1\sin{x_3} + v_3v_2\cos{x_3}$.
In particular,
\begin{equation*}
\left[\tilde{X}_0\Phi_1(x_1,x_2,x_3,0,0,\pm 1)\right]^2 + \left[\tilde{X}_0\Phi_2(x_1,x_2,x_3,0,0,\pm 1)\right]^2 = \sin^2{x_3} + \cos^2{x_3} = 1. 
\end{equation*}
This implies at least one of $\tilde{X}_0\Phi_1(x_1,x_2,x_3,0,0,\pm 1)$ and $\tilde{X}_0\Phi_2(x_1,x_2,x_3,0,0,\pm 1)$ is non-vanishing, which implies (H2). By Corollary~\ref{preliminarycor1}, Theorem~\ref{hypoellipticitythm1} is enough to show that the parabolic H\"{o}rmander condition holds everywhere.

\subsubsection{Proof of Theorem~\ref{hypoellipticitythm1}}
We verify the spanning condition for $\{ v_3\neq\pm 1\}$ by considering a moving frame (a collection of vector fields which form an orthonormal basis at each point), 
\begin{align*}
e_1 &= \begin{bmatrix}
1 & 0 & 0
\end{bmatrix}\cdot\nabla_{x} \\[3pt]
e_2 &= \begin{bmatrix}
0 & 1 & 0
\end{bmatrix}\cdot\nabla_{x} \\[3pt]
e_3 &= \begin{bmatrix}
0 & 0 & 1
\end{bmatrix}\cdot\nabla_{x} \\[3pt]
e_4 &= \frac{1}{\sqrt{1-( v_3)^2}}\begin{bmatrix}
- v_3 v_1 & - v_3 v_2 & 1-( v_3)^2
\end{bmatrix}\cdot\nabla_{v} \\[3pt]
e_5 &= \frac{1}{\sqrt{1-( v_3)^2}}\begin{bmatrix}
- v_2 &  v_1 & 0
\end{bmatrix}\cdot\nabla_{v}.
\end{align*}
One can see that $e_4$ and $e_5$ is not defined at $ v_3=\pm 1$; therefore, this moving frame is only valid for $ v_3\neq\pm 1$. We record a basic formula, and the brackets of the basis.

\begin{lemma}
Let $a$, $b$ be smooth functions, then
\begin{equation}\label{hypoellipticityeq7}
\left[a\,e_i, b\,e_j\right] = -\left(b\,e_ja\right)\,e_i + \left(a\,e_ib\right)\,e_j + ab\,[e_i, e_j]; 
\end{equation}
moreover, 
\begin{equation*}
[e_i,e_j] = 0
\end{equation*}
for $(i,j)\not\in \{ (4,5), (5,4)\}$, and
\begin{equation*}
[e_4,e_5] = \frac{ v_3}{\sqrt{1-( v_3)^2}}\,e_5.
\end{equation*}
\end{lemma}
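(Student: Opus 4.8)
The plan is to establish the two parts of the lemma separately: first the pointwise Leibniz-type identity \eqref{hypoellipticityeq7}, then the table of brackets $[e_i,e_j]$. The identity \eqref{hypoellipticityeq7} is nothing but the classical rule for the Lie bracket of rescaled vector fields. Applying $a e_i$ and then $b e_j$ to an arbitrary $\varphi\in C^\infty(\S\T^3)$ and using that $e_i,e_j$ are derivations, one finds $(a e_i)\big((b e_j)\varphi\big) = a(e_i b)(e_j\varphi) + ab\,e_ie_j\varphi$; subtracting the expression obtained by interchanging the roles of $(a,e_i)$ and $(b,e_j)$, and recalling that $\varphi$ was arbitrary, yields \eqref{hypoellipticityeq7}.

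For the brackets I would split according to whether the indices lie in $\{1,2,3\}$ (the base directions) or in $\{4,5\}$ (the fibre directions). The fields $e_1,e_2,e_3$ are exactly the coordinate vector fields $\partial_{x_1},\partial_{x_2},\partial_{x_3}$ on $\T^3$, hence mutually commuting, so $[e_i,e_j]=0$ for $i,j\in\{1,2,3\}$. For $i\in\{1,2,3\}$ and $j\in\{4,5\}$ the coefficients of $e_j$ are functions of the fibre variable $v$ alone and $e_j$ involves only $\nabla_v$, while $e_i=\partial_{x_i}$ has constant coefficients and differentiates only in $x$; hence in $[e_i,e_j]^{\ell} = e_i(e_j^{\ell}) - e_j(e_i^{\ell})$ both terms vanish and $[e_i,e_j]=0$. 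Together with antisymmetry and the vanishing of self-brackets, this disposes of every pair except $(4,5)$ and $(5,4)$.

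It remains to compute $[e_4,e_5]$. Set $\mu:=\sqrt{1-(v_3)^2}$, which is smooth and positive precisely on $\{v_3\neq\pm1\}$, the locus where the frame is defined, and write $e_4=\mu^{-1}A$, $e_5=\mu^{-1}B$ with $A:=-v_3v_1\,\partial_{v_1}-v_3v_2\,\partial_{v_2}+\mu^2\,\partial_{v_3}$ and $B:=-v_2\,\partial_{v_1}+v_1\,\partial_{v_2}$. A quick check gives $A(|v|^2)=2v_3(1-|v|^2)$ and $B(|v|^2)=0$, so both $A$ and $B$ are tangent to the unit sphere $\{|v|=1\}$; consequently $[A,B]$ may be computed in the ambient $v$-coordinates and then restricted to $\S\T^3$. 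Applying \eqref{hypoellipticityeq7} with $a=b=\mu^{-1}$ gives $[e_4,e_5]=\mu^{-2}[A,B]+\mu^{-1}(A\mu^{-1})\,B-\mu^{-1}(B\mu^{-1})\,A$. Here $B\mu^{-1}=0$ since $\mu$ depends on $v_3$ only and $B$ carries no $\partial_{v_3}$, while $A\mu^{-1}=\mu^2\,\partial_{v_3}(\mu^{-1})=v_3\mu^{-1}$; and $[A,B]=0$, which one sees either by computing the three components directly or, more conceptually, by noting that $B$ generates the rotations about the $v_3$-axis and $A$ is built from the rotation-invariant data $v_3$, $v_1\partial_{v_1}+v_2\partial_{v_2}$ and $\partial_{v_3}$. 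Combining these, $[e_4,e_5]=v_3\mu^{-2}B=v_3\mu^{-1}e_5=\dfrac{v_3}{\sqrt{1-(v_3)^2}}\,e_5$, as claimed.

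The argument is almost entirely bookkeeping; the only places needing a little care are tracking the normalizing factor $\mu^{-1}$ through \eqref{hypoellipticityeq7} and noting that $e_4,e_5$ are fibre-tangent, so their bracket may be evaluated in the ambient $v$-variables. I do not expect any real obstacle.
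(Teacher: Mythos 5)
Your proof is correct, and both halves check out: the Leibniz identity \eqref{hypoellipticityeq7} is derived exactly as in the paper, the vanishing of the brackets among $e_1,e_2,e_3$ and of the mixed $x$--$v$ brackets is justified correctly, and your value of $[e_4,e_5]$ agrees with the paper's (I verified $[A,B]=0$, $B\mu^{-1}=0$, and $A\mu^{-1}=v_3\mu^{-1}$ independently). Where you differ from the paper is in the computation of $[e_4,e_5]$: the paper composes $e_4e_5$ and $e_5e_4$ directly in the ambient $v$-coordinates, writing out the Jacobian matrices of the coefficient vectors and discarding the second-order parts, whereas you factor $e_4=\mu^{-1}A$, $e_5=\mu^{-1}B$ and feed this into the already-established identity \eqref{hypoellipticityeq7} with $a=b=\mu^{-1}$, reducing everything to the three scalar facts $[A,B]=0$, $B\mu^{-1}=0$, $A\mu^{-1}=v_3\mu^{-1}$. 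Your route is a bit cleaner and more explanatory---it shows that the nonzero bracket comes entirely from $A$ differentiating the normalization $\mu^{-1}$, with the unnormalized fields commuting (as one expects, since $B$ generates rotations about the $v_3$-axis and $A$ is rotation-invariant)---and it also makes good use of the first part of the lemma inside the proof of the second. The tangency check $A(|v|^2)=2v_3(1-|v|^2)$, $B(|v|^2)=0$, justifying the ambient computation, is a point the paper glosses over and is worth having spelled out. No gaps.
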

\begin{proof}
For (\ref{hypoellipticityeq7}), note that  
\begin{align*}
[a\,e_i, b\,e_j] &= \left(a\,e_ib\right)\,e_j + b\left(a\,e_i\right)e_j - \left(b\,e_ja\right)\,e_i - a\left(b\,e_j\right)\,e_i \\[5pt]
&= - \left(b\,e_ja\right)\,e_i + \left(a\,e_ib\right)\,e_j + ab\,[e_i, e_j]. 
\end{align*}
For the commutators of the basis, it is clear that $[e_i,e_j]=0$ for $(i,j)\neq (4,5)$; for $(i,j)=(4,5)$, 
\begin{align*}
    e_4\,e_5 &= \begin{bmatrix}
    - v_3 v_1 & - v_3 v_2 & 1-( v_3)^2
    \end{bmatrix}
    \begin{bmatrix}
    0 & \frac{1}{1-( v_3)^2} & 0 \\[5pt]
    -\frac{1}{1-( v_3)^2} & 0 & 0 \\[5pt]
    -\frac{ v_3 v_2}{(1-( v_3)^2)^2} & \frac{ v_3x'}{(1-( v_3)^2)^2} & 0
    \end{bmatrix}\cdot\nabla_{v} + \mbox{2nd order operator} \\
    &= 0 + \mbox{2nd order operator},
\end{align*}
where we write the second order term implicitly, since it will eventually be cancelled. Similarly,
\begin{align*}
    e_5\,e_4 &= \begin{bmatrix}
    - v_2 &  v_1 & 0
    \end{bmatrix}\begin{bmatrix}
    -\frac{ v_3}{1-( v_3)^2} & 0 & 0 \\[5pt]
    0 & -\frac{ v_3}{1-( v_3)^2} & 0 \\[5pt]
    -\frac{ v_1}{(1-( v_3)^2)^2} & -\frac{ v_2}{(1-( v_3)^2)^2} & -\frac{ v_3}{(1-( v_3)^2)}
    \end{bmatrix}\cdot\nabla_{v} + \mbox{2nd order operator} \\
    &= -\frac{ v_3}{\sqrt{1-( v_3)^2}}\,e_5 + \mbox{2nd order operator}.
\end{align*}
We can deduce our claim by combining the above two calculations. 
\end{proof}

Our next step is to see how the $\tilde{X}_k$'s are represented in this frame. It is a standard linear algebra exercise, and we record the result below with details omitted. 

\begin{lemma}\label{represent}
For the $\tilde{X}_k$'s, we have the following representaion 
\begin{align*}
\tilde{X}_0 &= \cos{ x_3}\,e_1 + \sin{ x_3}\,e_2 + e_3 + \frac{( v_3)^2}{\sqrt{1-( v_3)^2}}( v_1\sin{ x_3}- v_2\cos{ x_3})\,e_4 \\
&\quad + \frac{ v_3}{\sqrt{1-( v_3)^2}}( v_2\sin{ x_3} +  v_1\cos{ x_3})\,e_5,  \\[3pt]
\tilde{X}_1 &= e_3, \\[3pt]
\tilde{X}_k &= B_k\,e_3 + ( v_1\partial_{ x_1}B_k +  v_2\partial_{ x_2}B_k)\,e_4, \qquad k = 2,\dotsc, 5. 
\end{align*}
Here, $B_2 = \sin{ x_1}$, $B_3 = \cos{ x_1}$, $B_4 = \sin{ x_2}$ and $B_5 = \cos{ x_2}$.
\end{lemma}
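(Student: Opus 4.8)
The plan is to read the identity off from the orthogonal splitting of the Sasaki tangent space $T_w\mathbb{ST}^3 = \mathrm{Hor}_w\oplus\mathrm{Ver}_w$, where $\mathrm{Hor}_w=\mathrm{span}\{e_1,e_2,e_3\}$ is the horizontal (base) subspace and $\mathrm{Ver}_w=\mathrm{span}\{e_4,e_5\}$ is the vertical one, identified with $v^\perp\subset\mathbb{R}^3$. First I would note that $\{e_1,e_2,e_3\}$ and $\{e_4,e_5\}$ are orthonormal for the Sasaki metric: indeed $\|(-v_3v_1,-v_3v_2,1-(v_3)^2)\|^2=\|(-v_2,v_1,0)\|^2=1-(v_3)^2$, and both $(-v_3v_1,-v_3v_2,1-(v_3)^2)$ and $(-v_2,v_1,0)$ are orthogonal to $v$ and to each other. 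Given this, for each $k$ the horizontal part of $\tilde X_k$ is simply $X_k$ expressed in the coordinate frame $\partial_{x_1},\partial_{x_2},\partial_{x_3}=e_1,e_2,e_3$, which is read off directly from the coordinate formulas \eqref{applicataioneq1}--\eqref{applicataioneq6}; in particular this immediately gives $\tilde X_1=e_3$ and the $e_1,e_2,e_3$ coefficients of $\tilde X_0$ and of each $\tilde X_k$, $k\geq2$.

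The content is in the vertical part, which is $(1-v\otimes v)(\nabla_x X_k)v$ and which I would expand in $\{e_4,e_5\}$ by taking inner products. Two observations make this routine: (i) $(1-v\otimes v)\partial_{x_3}=(-v_3v_1,-v_3v_2,1-(v_3)^2)$ is a positive multiple of $e_4$, so anything proportional to it has no $e_5$-component; and (ii) any multiple of the radial field $v\cdot\nabla_v$ is orthogonal to both $e_4$ and $e_5$, hence contributes nothing to their coefficients. For $k=2,\dots,5$, $X_k=B_k(x_1,x_2)\partial_{x_3}$, so $(\nabla_xX_k)v=\langle\nabla B_k,v\rangle\partial_{x_3}=(v_1\partial_{x_1}B_k+v_2\partial_{x_2}B_k)\partial_{x_3}$, and projecting by $(1-v\otimes v)$ gives a multiple of $e_4$ alone, producing the stated $\tilde X_k=B_ke_3+(v_1\partial_{x_1}B_k+v_2\partial_{x_2}B_k)e_4$ form. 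For $k=0$, $X_0=(\cos x_3,\sin x_3,1)$, so $(\nabla_xX_0)v=v_3(-\sin x_3,\cos x_3,0)$; after projection the radial correction (the terms carrying $\alpha$ in \eqref{applicataioneq1}) drops out by (ii), and the inner products of $v_3(-\sin x_3,\cos x_3,0)$ with $e_4$ and $e_5$ evaluate to $\tfrac{(v_3)^2}{\sqrt{1-(v_3)^2}}(v_1\sin x_3-v_2\cos x_3)$ and $\tfrac{v_3}{\sqrt{1-(v_3)^2}}(v_2\sin x_3+v_1\cos x_3)$ respectively, matching the claimed $e_4$- and $e_5$-coefficients of $\tilde X_0$.

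There is no real obstacle here — the statement is a change-of-frame identity, and this is the ``standard linear algebra exercise'' the paper defers — but the one place to be careful is bookkeeping the $e_4$/$e_5$ split: the noise fields $\tilde X_2,\dots,\tilde X_5$ move only along $e_4$ inside each fiber, while the drift $\tilde X_0$ also carries an $e_5$-component, and this asymmetry is exactly what forces the iterated brackets in the proof of Theorem~\ref{hypoellipticitythm1}. I would also double-check the normalization of the $e_4$-coefficients of the $\tilde X_k$ ($k\geq2$): since $e_4$ is the \emph{unit} vector along $(-v_3v_1,-v_3v_2,1-(v_3)^2)$, that coefficient picks up a factor $\sqrt{1-(v_3)^2}$ which should be tracked consistently with the $\tilde X_0$ line; this is immaterial for the downstream spanning argument.
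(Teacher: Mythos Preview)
Your approach is correct and is exactly the natural one: split $\tilde X_k$ into its horizontal and vertical parts and read off the coefficients by taking inner products with the orthonormal frame $\{e_1,\dots,e_5\}$. The paper itself omits the proof entirely, calling it ``a standard linear algebra exercise,'' so there is nothing to compare against; your write-up is precisely the computation the paper defers. Your closing caveat about the $\sqrt{1-(v_3)^2}$ normalization in the $e_4$-coefficient of $\tilde X_k$ for $k\geq 2$ is observant and worth flagging, though as you note it does not affect the subsequent spanning argument.
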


Our strategy is to represent a collection of commutators of the vector fields $\tilde{X}_k$'s in this moving frame, and then verify the collection satisfies the spanning condition.

\begin{lemma}\label{spanningset}
For $w$ in $\mathbb{ST}^3$ such that $ v_3\neq\pm 1$, the following vector fields are represented in $\{e_1,\dotsc, e_5\}$ as
\begin{align*}
\tilde{X}_1 &= \mathbf{a}_1 \\[3pt]
\tilde{X}_k &= \mathbf{a}_k \\[3pt]
[\tilde{X}_1, \tilde{X}_0] &= \mathbf{a}_6\\[3pt]
[\tilde{X}_1,[\tilde{X}_1, \tilde{X}_0]] &= \mathbf{a}_7 \\[3pt]
[\tilde{X}_k,[\tilde{X}_1, \tilde{X}_0]] &= \mathbf{a}_{6+k} \\[3pt]
\big[\tilde{X}_k,[\tilde{X}_1[,\tilde{X}_1, \tilde{X}_0]]\big]& = \mathbf{a}_{10+k},
\end{align*}
for $k=2,\dotsc, 5$, where
\begin{align*}
\mathbf{a}_1 &= \begin{bmatrix}
0 & 0 & 1 & 0 & 0
\end{bmatrix} \\[3pt]
\mathbf{a}_k &= \begin{bmatrix}
0 & 0 & B_k &  v_1\partial_{ x_1}B_k +  v_2\partial_{ x_2}B_k & 0
\end{bmatrix} \\[3pt]
\mathbf{a}_6 &= \begin{bmatrix}
-\sin{ x_3} & \cos{ x_3} & 0 & \frac{( v_3)^2}{\sqrt{1-( v_3)^2}}( v_1\cos{ x_3} +  v_2\sin{ x_3}) & -\frac{ v_3}{\sqrt{1-( v_3)^2}}(- v_2\cos{ x_3} +  v_1\sin{ x_3})
\end{bmatrix} \\[3pt]
\mathbf{a}_7 &= \begin{bmatrix}
-\cos{ x_3} & -\sin{ x_3} & 0 & -\frac{( v_3)^2}{\sqrt{1-( v_3)^2}}( v_1\sin{ x_3} -  v_2\cos{ x_3}) & -\frac{ v_3}{\sqrt{1-( v_3)^2}}( v_2\sin{ x_3} +  v_1\cos{ x_3})
\end{bmatrix} \\[3pt]
\mathbf{a}_{6+k} &= a_k\,\mathbf{a}_7 + \begin{bmatrix}
0 & 0 & b_k & c_k & ( v_1\partial_{ x_1}B_k +  v_2\partial_{ x_2}B_k)\,\frac{1}{1-( v_3)^2}\,( v_2\cos{ x_3} -  v_1\sin{ x_3})
\end{bmatrix} \\[3pt]
\mathbf{a}_{10+k} &= a_k'\,\mathbf{a}_6 + \begin{bmatrix}
0 & 0 & b_k' & c_k' & -( v_1\partial_{ x_1}B_k +  v_2\partial_{ x_2}B_k)\,\frac{1}{1-( v_3)^2}\,( v_1\cos{ x_3} +  v_2\sin{ x_3})
\end{bmatrix},
\end{align*}
and $a_k$, $b_k$, $c_k$, $a_k'$, $b_k'$, $c_k'$ are smooth functions. 
\end{lemma}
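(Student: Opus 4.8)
The proof is a direct computation: each vector field listed is an iterated bracket of $\tilde X_0,\tilde X_1,\tilde X_k$, and I would evaluate these brackets in the frame $\{e_1,\dots,e_5\}$ using the product rule \eqref{hypoellipticityeq7} together with the structure constants $[e_i,e_j]=0$ for $(i,j)\notin\{(4,5),(5,4)\}$ and $[e_4,e_5]=\tfrac{v_3}{\sqrt{1-(v_3)^2}}\,e_5$. The first two lines, $\tilde X_1=\mathbf a_1$ and $\tilde X_k=\mathbf a_k$, are nothing but Lemma~\ref{represent}.

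For $\mathbf a_6$ and $\mathbf a_7$ I would exploit that $\tilde X_1=e_3=\partial_{x_3}$ commutes with each of $e_1,e_2,e_4,e_5$ (these frame fields are $x_3$-independent), so that for any field $\sum_i\beta_i e_i$ with coefficients depending only on $(x_3,v)$ one has $[\tilde X_1,\sum_i\beta_i e_i]=\sum_i(\partial_{x_3}\beta_i)\,e_i$. Applied to the expression for $\tilde X_0$ in Lemma~\ref{represent} this gives $[\tilde X_1,\tilde X_0]=\mathbf a_6$, and applied once more it gives $[\tilde X_1,[\tilde X_1,\tilde X_0]]=\partial_{x_3}\mathbf a_6=\mathbf a_7$; one also notes $\partial_{x_3}\mathbf a_7=-\mathbf a_6$, which is exactly what produces the $\mathbf a_6$ term inside $\mathbf a_{10+k}$.

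For $\mathbf a_{6+k}$ and $\mathbf a_{10+k}$, write $\tilde X_k=B_k e_3+C_k e_4$ with $C_k:=v_1\partial_{x_1}B_k+v_2\partial_{x_2}B_k$ (Lemma~\ref{represent}) and expand $[\tilde X_k,\mathbf a_6]$, $[\tilde X_k,\mathbf a_7]$ via \eqref{hypoellipticityeq7}. The $B_k e_3$ piece contributes $B_k\,\partial_{x_3}\mathbf a_6=B_k\mathbf a_7$ (resp.\ $-B_k\mathbf a_6$) together with an $e_3$-term $-(\mathbf a_6B_k)e_3$; since the $C_k e_4$ piece produces no $e_1,e_2$ components, this pins down $a_k=B_k$ and $a_k'=-B_k$, while the $e_3$-term is absorbed into $b_k$, $b_k'$. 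The $C_k e_4$ piece contributes an $e_4$-term (from $e_4$ hitting the $e_4$-coefficients of $\mathbf a_6,\mathbf a_7$ and from the $-(\mathbf a_6C_k)e_4$-type terms), absorbed into $c_k,c_k'$, and — the only delicate point — an $e_5$-term coming both from $e_4$ hitting the $e_5$-coefficient and from the single nonzero bracket $[e_4,e_5]$. Here one uses the elementary identities $e_4\!\left(\tfrac{v_3}{\sqrt{1-(v_3)^2}}\right)=\tfrac{1}{1-(v_3)^2}$, $e_4(v_2\cos x_3-v_1\sin x_3)=-\tfrac{v_3}{\sqrt{1-(v_3)^2}}(v_2\cos x_3-v_1\sin x_3)$, and $e_4(v_1\cos x_3+v_2\sin x_3)=-\tfrac{v_3}{\sqrt{1-(v_3)^2}}(v_1\cos x_3+v_2\sin x_3)$; combined with the arithmetic $1+\tfrac{(v_3)^2}{1-(v_3)^2}=\tfrac{1}{1-(v_3)^2}$ this collapses the two $e_5$-contributions into the single clean coefficient $\pm C_k\tfrac{1}{1-(v_3)^2}(v_2\cos x_3-v_1\sin x_3)$, respectively $-C_k\tfrac{1}{1-(v_3)^2}(v_1\cos x_3+v_2\sin x_3)$, as displayed.

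The only genuine obstacle is bookkeeping: one must track exactly which coefficient is differentiated by which frame field, and check that the "rotational" pieces recombine precisely into multiples of $\mathbf a_7$ and $\mathbf a_6$ rather than into something merely proportional modulo lower-order terms. Once the $e_5$-component has been isolated as above, every remaining $e_1,e_2,e_3,e_4$ contribution is visibly a smooth function on $\{v_3\neq\pm1\}$, and may be named $a_k,b_k,c_k,a_k',b_k',c_k'$ with no further computation, since only the explicit $e_5$-entries matter for the spanning argument that follows.
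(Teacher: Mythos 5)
Your proposal is correct and follows essentially the same route as the paper: decompose $\tilde X_k=B_ke_3+C_ke_4$ via Lemma~\ref{represent}, expand the iterated brackets with \eqref{hypoellipticityeq7} and the structure relations $[e_i,e_j]=0$, $[e_4,e_5]=\tfrac{v_3}{\sqrt{1-(v_3)^2}}e_5$, and track only the $e_1,e_2$ and $e_5$ components exactly while absorbing the rest into $a_k,b_k,c_k$ (and primed versions). Your identities for $e_4$ acting on the coefficients and the cancellation $1+\tfrac{(v_3)^2}{1-(v_3)^2}=\tfrac{1}{1-(v_3)^2}$ reproduce precisely the paper's computation of the $e_5$-entry, so no gap remains.
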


\begin{proof}
The representations of $\tilde{X}_1, \tilde{X}_2, \dotsc, \tilde{X}_5$ follow directly from Lemma~\ref{represent}. We start with $\mathbf{a}_6$, 

\begin{align*}
[\tilde{X}_1, \tilde{X}_0] &= [e_3, \cos{ x_3}\,e_1 + \sin{ x_3}\,e_2 + e_3 \\ &\quad + \frac{( v_3)^2}{\sqrt{1-( v_3)^2}}( v_1\sin{ x_3}- v_2\cos{ x_3})\,e_4  +  \frac{ v_3}{\sqrt{1-( v_3)^2}}( v_2\sin{ x_3} +  v_1\cos{ x_3})\,e_5], 
\end{align*}
which simplifies to 
\begin{align*}
[\tilde{X}_1, \tilde{X}_0] &= -\sin{ x_3}e_1 + \cos{ x_3}e_2 + \frac{( v_3)^2}{\sqrt{1-( v_3)^2}}( v_1\cos{ x_3}+ v_2\sin{ x_3})\,e_4  \\
& \quad + \frac{ v_3}{\sqrt{1-( v_3)^2}}( v_2\cos{ x_3} -  v_1\sin{ x_3})\,e_5;
\end{align*}
this gives $\mathbf{a}_6$. The calculation of $\mathbf{a}_7$ can be done similarly.

Now, we compute $\mathbf{a}_{6+k}$ for $k=2,\dotsc, 5$. Due to bilinearity of the Lie bracket, we have
\begin{equation*}
[\tilde{X}_k,[\tilde{X}_1, \tilde{X}_0]] = [B_ke_3,[\tilde{X}_1, \tilde{X}_0]] + [( v_1\partial_{ x_1}B_k +  v_2\partial_{ x_2}B_k)e_4,[\tilde{X}_1, \tilde{X}_0]].
\end{equation*}
We compute each term seperately; starting with  
\begin{multline*}
\big[B_ke_3,\,[\tilde{X}_1, \tilde{X}_0]\big] = (-B_k\cos{ x_3}\,e_1 + \partial_{ x_1}B_k\,\sin{ x_3}\,e_3) + (-B_k\,\sin{ x_3}\,e_1 - \partial_{ x_2}B_k\,\cos{ x_3}\,e_3) \\
+ B_k\frac{( v_3)^2}{\sqrt{1-( v_3)^2}}\,(- v_1\sin{ x_3} +  v_2\cos{ x_3})\,e_4 + B_k\frac{ v_3}{\sqrt{1-( v_3)^2}}\,(- v_2\sin{ x_3} -  v_1\cos{ x_3})\,e_5,
\end{multline*}
which reduces to 
\begin{equation*}
\big[B_ke_3,\,[\tilde{X}_1, \tilde{X}_0]\big] = B_k\big[\tilde{X}_1,\,[\tilde{X}_1, \tilde{X}_0]\big] + (\partial_xB_k\sin{ x_3} - \partial_yB_k\cos{ x_3})\,\tilde{X}_1.
\end{equation*}
Then we consider
\begin{multline*}
\big[( v_1\partial_{ x_1}B_k +  v_2\partial_{ x_2}B_k)\,e_4, [\tilde{X}_1, \tilde{X}_0]\big] \\
= \big[( v_1\partial_{ x_1}B_k +  v_2\partial_{ x_2}B_k)\,e_4, [\tilde{X}_1, \tilde{X}_0] - \frac{ v_3}{\sqrt{1-( v_3)^2}}\,( v_2\cos{ x_3} -  v_1\sin{ x_3})\,e_5\big] \\
    + \big[( v_1\partial_{ x_1}B_k +  v_2\partial_{ x_2}B_k)\,e_4,\, \frac{ v_3}{\sqrt{1-( v_3)^2}}\,( v_2\cos{ x_3} -  v_1\sin{ x_3})\,e_5\big].
\end{multline*}
Since
\begin{equation*}
[\tilde{X}_1, \tilde{X}_0] - \frac{ v_3}{\sqrt{1-( v_3)^2}}\,( v_2\cos{ x_3} -  v_1\sin{ x_3})\,e_5
\end{equation*}
has zero component in the direction $e_5$, one can see that
\begin{equation*}
\big[( v_1\partial_{ x_1}B_k +  v_2\partial_{ x_2}B_k)\,e_4,\,[\tilde{X}_1, \tilde{X}_0] - \frac{ v_3}{\sqrt{1-( v_3)^2}}\,( v_2\cos{ x_3} -  v_1\sin{ x_3})\,e_5\big] = \alpha\,e_4, 
\end{equation*}
where $\alpha$ is a smooth function. For the second term,
\begin{equation*}
\big[( v_1\partial_{ x_1}B_k +  v_2\partial_{ x_2}B_k)\,e_4,\,\frac{ v_3}{\sqrt{1-( v_3)^2}}\,( v_2\cos{ x_3}- v_1\sin{ x_3})\,e_5\big] = \alpha''\,e_4 + \alpha'''\,e_5 + \alpha''''\,[e_4,e_5],
\end{equation*}
where
\begin{align*}
\alpha''' &= ( v_1\partial_{ x_1}B_k +  v_2\partial_{ x_2}B_k)\,e_4\left( \frac{ v_3}{\sqrt{1-( v_3)^2}}( v_2\cos{ x_3- v_1\sin{ x_3}}) \right) 
\\[3pt]
\alpha'''' &= ( v_1\partial_{ x_1}B_k +  v_2\partial_{ x_2}B_k)\frac{ v_3}{\sqrt{1-( v_3)^2}}\,( v_2\cos{ x_3} -  v_1\sin{ x_3}),
\end{align*}
and $e_4$ denotes the directional derivative. One can show
\begin{equation*}
\alpha'''e_5 = ( v_1\partial_{ x_1}B_k +  v_2\partial_{ x_2}B_k)( v_2\cos{ x_3} -  v_1\sin{ x_3})\,e_5,
\end{equation*}
and
\begin{align*}
\alpha''''\,[e_4,e_5] &= ( v_1\partial_{ x_1}B_k +  v_2\partial_{ x_2}B_k)\frac{ v_3}{\sqrt{1-( v_3)^2}}\,( v_2\cos{ x_3} -  v_1\sin{ x_3})\frac{ v_3}{\sqrt{1-( v_3)^2}}\,e_5 \\[3pt]
&= ( v_1\partial_{ x_1}B_k +  v_2\partial_{ x_2}B_k)( v_2\cos{ x_3} -  v_1\sin{ x_3})\frac{( v_3)^2}{1-( v_3)^2}\,e_5,
\end{align*}
which we can substitute back into the above to get
\begin{multline*}
\big[( v_1\partial_{ x_1}B_k +  v_2\partial_{ x_2}B_k)\,e_4,\,\frac{ v_3}{\sqrt{1-( v_3)^2}}\,( v_2\cos{ x_3}- v_1\sin{ x_3})\,e_5\big] = \alpha''\,e_4 \\ 
+ ( v_1\partial_{ x_1}B_k +  v_2\partial_{ x_2}B_k)( v_2\cos{ x_3} -  v_1\sin{ x_3})\frac{1}{1-( v_3)^2}\,e_5.
\end{multline*}
Therefore, we have
\begin{multline*}
[\tilde{X}_k,[\tilde{X}_1, \tilde{X}_0]] = B_k\big[\tilde{X}_1,\,[\tilde{X}_1, \tilde{X}_0]\big] + (\partial_xB_k\sin{ x_3} - \partial_yB_k\cos{ x_3})\,\tilde{X}_1 + \alpha''\,e_4 \\ 
+ ( v_1\partial_{ x_1}B_k +  v_2\partial_{ x_2}B_k)( v_2\cos{ x_3} -  v_1\sin{ x_3})\frac{1}{1-( v_3)^2}\,e_5,
\end{multline*}
which shows our claim for $\mathbf{a}_{6+k}$ where $k=2,\dotsc, 5$. The calculation for $\mathbf{a}_{10+k}$ where $k=2,\dotsc, 5$ can be done similarly. 
\end{proof}

Finally, we are in the position to verify Theorem~\ref{hypoellipticitythm1}; we show the collection in the previous lemma gives a spanning set of the tangent space in $\{ v_3\neq\pm 1\}$.

\begin{proof}
Our goal is to show that $\{\mathbf{a}_1, \mathbf{a}_2, \dotsc, \mathbf{a}_{15}\}$ spans $\mathbb{R}^5$ when $v_3\neq\pm 1$. First of all, we observe that
\begin{equation*}
\mathbf{a}_{k,4} =  v_1\partial_{ x_1}B_k +  v_2\partial_{ x_2}B_k \quad\mbox{for}\quad k=2,\dotsc, 5
\end{equation*}
cannot all vanish at the same time. Indeed, 
\begin{equation*}
\mathbf{a}_{2,4} =  v_1\cos{ x_1}, \quad \mathbf{a}_{3,4} = - v_1\sin{ x_1}, \quad \mathbf{a}_{4,4} =  v_2\cos{ x_2} \quad\mbox{and}\quad \mathbf{a}_{5,4} = - v_2\sin{ x_2},
\end{equation*}
and we know $ v_1$ and $ v_2$ cannot vanish at the same time, since $( v_1)^2 + ( v_2)^2 = 1 - ( v_3)^2 > 0$; hence, we may pick some $k_1\in\{2,\dotsc, 5\}$ such that $\mathbf{a}_{k_1, 4}\neq 0$. 

Next, denote
\begin{equation*}
\mathbf{b}_{6+k} = \mathbf{a}_{6+k} - a_k\,\mathbf{a}_7 \quad\mbox{and}\quad \mathbf{b}_{10+k} = \mathbf{a}_{10+k} - a_k\,\mathbf{a}_6
\end{equation*}
for $k=2,\dotsc, 5$; our goal is to show that there exists $\mathbf{b}_{k_2}$ in $\{\mathbf{b}_8,\mathbf{b}_9,\dotsc, \mathbf{b}_{15}\}$ such that $\mathbf{b}_{k_2, 5}$ does not vanish. This reduces to showing that
\begin{equation*}
 v_2\cos{ x_3} -  v_1\sin{ x_3} \quad\mbox{and}\quad  v_1\cos{ x_3} +  v_2\sin{ x_3}
\end{equation*}
cannot vanish at the same time. By contradiction, we suppose otherwise, and we may without loss of generality assume $\cos{ x_3}$ and $ v_1$ are not zero. Then
\begin{equation*}
 v_2\cos{ x_3} -  v_1\sin{ x_3} =  v_1\cos{ x_3} +  v_2\sin{ x_3} = 0 \quad\Rightarrow\quad  \frac{ v_2}{ v_1} = \tan{ x_3} = -\frac{ v_1}{ v_2}.
\end{equation*}
Also, $ v_1\cos{ x_3} +  v_2\sin{ x_3} = 0$ implies that $\sin{ x_3}$ and $ v_2$ are also non-vanishing under our hypothesis. We then deduce
\begin{equation*}
\frac{ v_2}{ v_1} = -\frac{ v_1}{ v_2} \quad\Rightarrow\quad -( v_1)^2 = ( v_2)^2,
\end{equation*}
which is possible if and only if $ v_1= v_2=0$; this is a contradiction, since $( v_1)^2 + ( v_2)^2 = 1 - ( v_3)^2 > 0$. Hence, we may pick $k_2 = 6+k_1$ if $ v_2\cos{ x_3} -  v_1\sin{ x_3}\neq 0$ or $k_2 = 10+k_1$ if $ v_1\cos{ x_3} +  v_2\sin{ x_3}\neq 0$. 

Finally, we conclude that $\{\mathbf{a}_1, \mathbf{a}_{k_1}, \mathbf{a}_6, \mathbf{a}_7, \mathbf{a}_{k_2}\}$ spans $\mathbb{R}^5$. Indeed, by putting these vectors together, we obtain a matrix
\begin{equation*}
\begin{bmatrix}
--- & \mathbf{a}_1 & --- \\[3pt]
--- & \mathbf{a}_{k_1} & --- \\[3pt]
--- & \mathbf{a}_6 & --- \\[3pt]
--- & \mathbf{a}_7 & --- \\[3pt]
--- & \mathbf{a}_{k_2} & ---
\end{bmatrix},
\end{equation*}
which is row equivalent to 
\begin{equation*}
\begin{bmatrix}
0 & 0 & 1 & 0 & 0 \\[3pt]
0 & 0 & * & \alpha & 0 \\[3pt]
-\sin{ x_3} & \phantom{-}\cos{ x_3} & 0 & * & * \\[3pt]
-\cos{ x_3} & -\sin{ x_3} & 0 & * & * \\[3pt]
0 & 0 & * & * &  \beta
\end{bmatrix},
\end{equation*}
where $*$ are the unimportant entries, and $\alpha$, $\beta$ are non-zero. It is clear that the matrix is row equivalent to the block matrix,
\begin{equation*}
\begin{bmatrix}
-\sin{ x_3} & \phantom{-}\cos{ x_3} & 0 & 0 & 0 \\[3pt]
-\cos{ x_3} & -\sin{ x_3} & 0 & 0 & 0 \\[3pt]
0 & 0 & 1 & 0 & 0 \\[3pt]
0 & 0 & 0 & \alpha & 0 \\[3pt]
0 & 0 & 0 & 0 & \beta
\end{bmatrix},
\end{equation*}
where the first block is an orthogonal matrix, and the second block is of full rank. So the entire matrix is of full rank, and the proof is completed. 
\end{proof}

\subsection{Irreducibility}\label{irreducibility}

In this section, we verify the irreducibility for the projective process $w_t$. In the following, we will use subscripts such as $x$ and $v$ to denote the associated components of a given position or vector field. 

Recall that for a given family of vector fields $\mathcal{F}$, a continuous curve $\eta: [0,t]\rightarrow N$ is an integral curve of $\mathcal{F}$ if there is a partition $0= t_0 < t_1 <\dotsc<t_m = t$ and vector fields $V_1,\dotsc, V_m$ in $\mathcal{F}$ such that
\begin{equation*}
\dot{\eta}(t) = V_i(\eta(t)) \quad\mbox{for}\quad t_{i-1} < t < t_i, \quad i=1,\dotsc, m.
\end{equation*}
In the following, we will construct our control scheme iteratively: hence, it is convenient to define the extension of an integral curve. We say an integral curve $\tilde{\eta}: [0,t']\rightarrow N$ is an extension of $\eta: [0,t]\rightarrow N$, if
\begin{enumerate}
\item $[0,t]\subseteq [0,t']$
\item $\left.\tilde{\eta}\right\vert_{[0,t]} = \eta$
\item $\left.\tilde{\eta}\right\vert_{[t,t']}$ is an integral curve of $\mathcal{F}$.
\end{enumerate}
To simplify our notation, we often just use the same notation for the extension and the original function, when it is clear from the context. 

Let us start with the control problem for the base process $x_t$. An important point is that we only need $X_0$ and $X_1$ to get the irreducibility of the base process. 

\begin{theorem}\label{irreducibilitythm1}
Let $\mathcal{F}'= X_0 + \operatorname{span}{X_1}$. There exists $T_1>0$ such that 
\begin{equation}\label{irreducibilityeq6}
\overline{\mathcal{A}_{\mathcal{F}'}(x', T_1)} = \mathbb{T}^3 \quad\forall x'\in\mathbb{T}^3.
\end{equation}
Moreover, given an open neighborhood $V$ of $\left(x_1', x_2'\right)$ in $\mathbb{T}^2$ and $t\geq 0$,
\begin{equation}\label{irreducibilityeq5}
\overline{\left\{(\phi_t^1,\,\phi_t^2)\in V: \mbox{$\phi_t = \left(\phi_t^1,\,\phi_t^2,\,\phi_t^3\right)$ is an integral curve of $\mathcal{F}'$ with initial $x'$}\right\}} \neq\emptyset,
\end{equation}
i.e., we can fix the position in $(x_1,x_2)$ around its initial. 
\end{theorem}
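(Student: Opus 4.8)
The plan is to exploit the very simple structure of the base system: with control $\alpha$ on $X_1$, the controlled ODE is $\dot x_1 = \cos x_3$, $\dot x_2 = \sin x_3$, $\dot x_3 = 1 + \alpha(t)$. The key observation is that $X_1 = \partial_{x_3}$ alone lets us move $x_3$ arbitrarily fast in the positive direction (and, since we only need the closure, effectively park $x_3$ near any target value by running $X_0$ for a time just under a full revolution $2\pi$ of the $\dot x_3 = 1$ drift, or by adding a large positive $\alpha$-burst). So the strategy is: (i) show we can steer $(x_1,x_2)$ to any target in $\mathbb{T}^2$ while controlling $x_3$, then (ii) at the end adjust $x_3$ to its target using a short $X_1$-burst, giving \eqref{irreducibilityeq6}; and (iii) observe the same construction, run with $x_3$ held (approximately) constant via fast $\partial_{x_3}$ motion interleaved with short $X_0$-segments, lets us translate $(x_1,x_2)$ in an essentially arbitrary planar direction, which yields \eqref{irreducibilityeq5}.

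Concretely, for the first part I would argue as follows. Fix $x_3$ at some value $c$ by a short $X_1$ (pure $\partial_{x_3}$) motion if necessary; then following $X_0$ for time $s$ moves $(x_1,x_2)$ by $s(\cos c,\sin c)$ plus a drift of $s$ in $x_3$. By alternating: run $X_1$ to reset $x_3$ to a chosen direction $c_j$ (this is instantaneous in $(x_1,x_2)$ since $X_1$ has no $x_1,x_2$-component), then run $X_0$ for a short time $s_j$, the net displacement in $(x_1,x_2)$ is $\sum_j s_j(\cos c_j,\sin c_j)$, which can be made to equal any vector in $\mathbb{R}^2$ (hence any point of $\mathbb{T}^2$) by choosing, say, two directions $c_1=0$, $c_2=\pi/2$ and appropriate nonnegative times, splitting into small steps so the accumulated $x_3$-drift can always be absorbed by the next $X_1$-reset. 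After reaching the target $(x_1',x_2')$-value, one final $X_1$-burst sets $x_3$ to its target; since $X_1$ does not move $(x_1,x_2)$, this proves \eqref{irreducibilityeq6}, and the time $T_1$ can be taken uniform because the construction's duration is controlled by the (bounded) diameter of $\mathbb{T}^2$ plus a bounded number of $O(1)$ resets. For \eqref{irreducibilityeq5}, the point is that throughout the $X_0$-segments we keep $(x_1,x_2)$ inside any prescribed neighborhood $V$ of $(x_1',x_2')$ by simply taking the $X_0$-times $s_j$ small and returning: e.g. move a little in direction $c$, then a little in direction $c+\pi$, netting zero planar displacement while $x_3$ cycles — so the planar component can be held in $V$ for as long as we like, and in particular the closure in \eqref{irreducibilityeq5} is nonempty for every $t \geq 0$.

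The main obstacle, and the only place care is needed, is the coupling through the $x_3$-drift: every time we run $X_0$ the angle $x_3$ advances, so we cannot independently prescribe both the planar motion and the final value of $x_3$ in a single segment. The resolution — and the reason the statement is about the \emph{closure} — is that $X_1 = \partial_{x_3}$ acts transitively and instantaneously (in the $(x_1,x_2)$-coordinates) on the $x_3$-fiber in the forward direction, so accumulated drift is always correctable; one must just be mildly careful to phrase this using closures (a positive $\partial_{x_3}$-burst of length slightly less than a multiple of $2\pi$ approximates any backward shift of $x_3$) rather than claiming exact reachability. I would also note that since the reachable set of a parabolic-Hörmander system has nonempty interior (Proposition~5.10 of \cite{BBPS22}), proving density of the reachable set as above is exactly what is needed, via the remark around \eqref{preliminaryeq2}, to conclude the support-theorem hypothesis for the base process; the analogous statement for the full projective process is handled separately in the remainder of this section.
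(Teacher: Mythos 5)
Your proposal is correct and follows essentially the same route as the paper: both arguments exploit that a large coefficient on $X_1=\partial_{x_3}$ moves $x_3$ while displacing $(x_1,x_2)$ only by $O(1/\gamma)$, and then use the drift $X_0$ with $x_3$ set to chosen values to steer $(x_1,x_2)$, with the closure absorbing all approximation errors. The only differences are cosmetic: the paper observes that the single constant control $X_0+(-1+c)X_1$ yields an exact circle of radius $1/c$ in the $(x_1,x_2)$-plane, which joins any two planar points in one arc and (for $c\gg1$) gives the trapping statement \eqref{irreducibilityeq5} without your Riemann-sum bookkeeping; and note that \eqref{irreducibilityeq6} concerns the reachable set at \emph{exactly} time $T_1$, so you should add one line using your own \eqref{irreducibilityeq5} to idle until time $T_1-\epsilon$ and then flow by $X_0$ into the target (the paper's choice of $\tilde U$ with $\mathcal{A}_{X_0}(\tilde U,\epsilon)\subset U$), rather than only bounding the total duration.
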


\begin{proof}
Given a initial $x'$ and $c\neq 0$, a straightforward calculation shows the integral curve of  
\begin{equation*}
\phi_t = x' + \int_0^t X_0(\phi_s)\,ds + \int_0^t(-1+c)X_1(\phi_s)\,ds
.\end{equation*}
is
\begin{align*}
\phi_t^1 &= x_1'-\frac{1}{c}\sin{(x_3')} + \frac{1}{c}\sin{(x_3'+ct)} \\
\phi_t^2 &= x_2' + \frac{1}{c}\cos{(x_3')} - \frac{1}{c}\cos{(x_3'+ct)} \\[3pt]
\phi_t^3 &= x_3' + ct.
\end{align*}
One can observe that the trajectory of $\left(\phi_t^1,\, \phi_t^2\right)$ forms a circle passing through $(x_1', x_2')$ with radius $1/c$ and center $(x_1'-\sin(x_3')/c,\, x_2' + \cos(x_3')/c)$. By adjusting $c$, we may join any two points in the $x_1$-$x_2$ plane by such circle. In particular, there exists $T_1'$ such that
\begin{equation*}
\overline{\left\{\left(\phi_t^1, \phi_t^2\right)\in\mathbb{T}^2: \mbox{$\phi_t = \left(\phi_t^1, \phi_t^2, \phi_t^3\right)$ is an integral curve of $\mathcal{F}'$ with initial $x'$, $t\leq T_1'$}\right\}} = \mathbb{T}^2.
\end{equation*}
Note also that by picking $c\gg 1$ we can trap $\left(\phi_t^1,\,\phi_t^2\right)$ in any open neighborhood of its initial position $\left(x_1',\,x_2'\right)$, so  (\ref{irreducibilityeq5}) is proved. 

Finally, we deduce the controllability of $x_t$. Let $T_1 > T_1'$. Fix $U$ a target open set, $\tilde{U}$ a open subset of $U$, and $\epsilon>0$ such that
\begin{equation*}
\mathcal{A}_{X_0}(\tilde{U}, \epsilon) \subset U. 
\end{equation*}
We may without loss of generality assume $U = I_1\times I_2\times I_3$, and $\tilde{U} = \tilde{I}_1\times \tilde{I}_2\times \tilde{I}_3$, where $I_k$ and $\tilde{I}_k$ are open sets in $\mathbb{T}$ for $k=1, 2, 3$. We follow the steps below to obtain (\ref{irreducibilityeq6}); some pictures explaining the strategy are in Figure~\ref{irreducibilitypic1}.
\begin{enumerate}
\item Construct an integral curve $\psi: [0,t]\rightarrow \mathbb{T}^3$ of $\mathcal{F}'$ such that $\left(\psi_t^1, \psi_t^2 \right)\in \tilde{I}_1\times \tilde{I}_2$. 
\item Extend $\psi$ to an integral curve $\psi: [0,T_1-\epsilon]\rightarrow \mathbb{T}^3$ of $\mathcal{F}'$ such that $\left(\psi_{T_1-\epsilon}^1, \psi_{T_1-\epsilon}^2\right)\in \tilde{I}_1\times \tilde{I}_2$.
\item Extend $\psi$ to an integral curve $\psi: [0, T_1-\epsilon + \gamma^{-1}]\rightarrow\mathbb{T}^3$ such that 
\begin{equation*}
\psi_{T_1-\epsilon + \gamma^{-1}} = \psi_{T_1-\epsilon} + \int_0^{1/\gamma}X_0(\psi_s)\,ds + \int_0^{1/\gamma}\gamma c'X_1(\psi_s)\,ds
\end{equation*}
for some $\gamma\gg 1$ and suitable $c'$, so that $\psi_{T_1-\epsilon + \gamma^{-1}}\in\tilde{U}$.
\item Finally, extend $\psi$ to an integral curve $\psi: [0,T_1]\rightarrow\mathbb{T}^3$ of $\mathcal{F}'$ such that
\begin{equation*}
\psi_{T_1} = \psi_{T_1-\epsilon+1/\gamma} + \int_0^{\epsilon-1/\gamma} X_0(\psi_s)\,ds.
\end{equation*}
Due to our choice of $\epsilon$, and the fact that $\psi_{T_1-\epsilon+1/\gamma}\in \tilde{U}$, we have $\psi_{T_1}\in U$. 
\end{enumerate}

This concludes the proof of Theorem~\ref{irreducibilitythm1}.
\end{proof}

\begin{figure}
\begin{subfigure}{.5\textwidth}
  \centering
  \includegraphics[width=.8\linewidth]{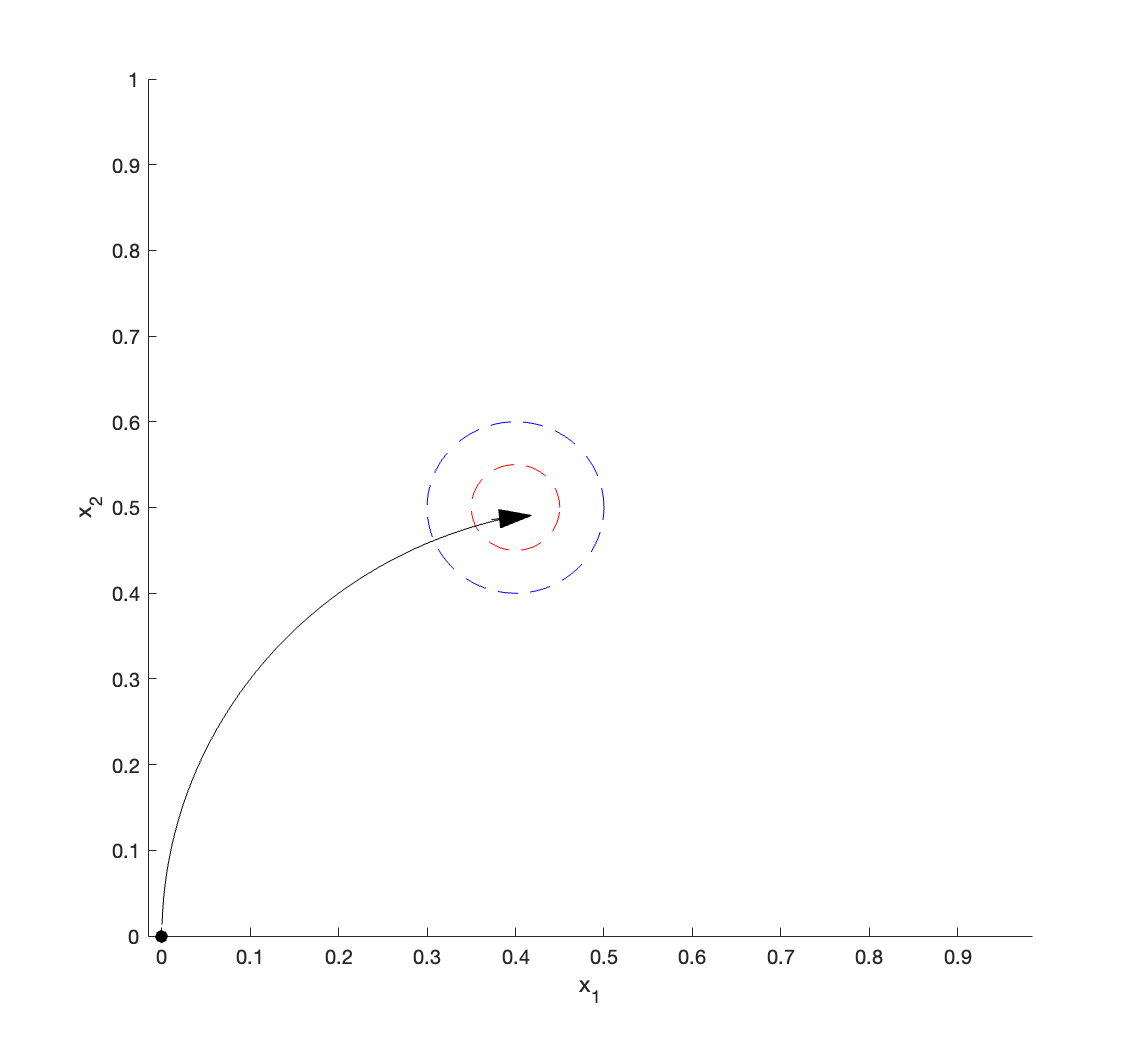}
  \caption{Step~1: $(x_1,\,x_2)$ move to the red neighborhood}
  \label{fig:sfig1}
\end{subfigure}%
\begin{subfigure}{.5\textwidth}
  \centering
  \includegraphics[width=.8\linewidth]{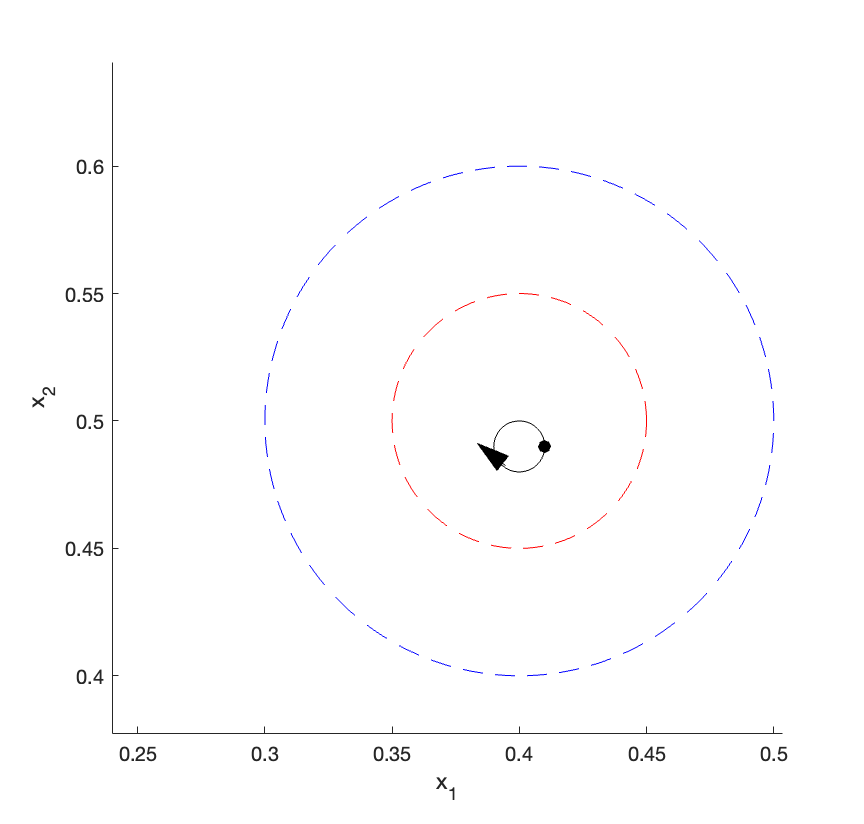}
  \caption{Step~2: $(x_1,\,x_2)$ idle inside the red neighborhood.}
  \label{fig:sfig2}
\end{subfigure}
\begin{subfigure}{.5\textwidth}
  \centering
  \includegraphics[width=.8\linewidth]{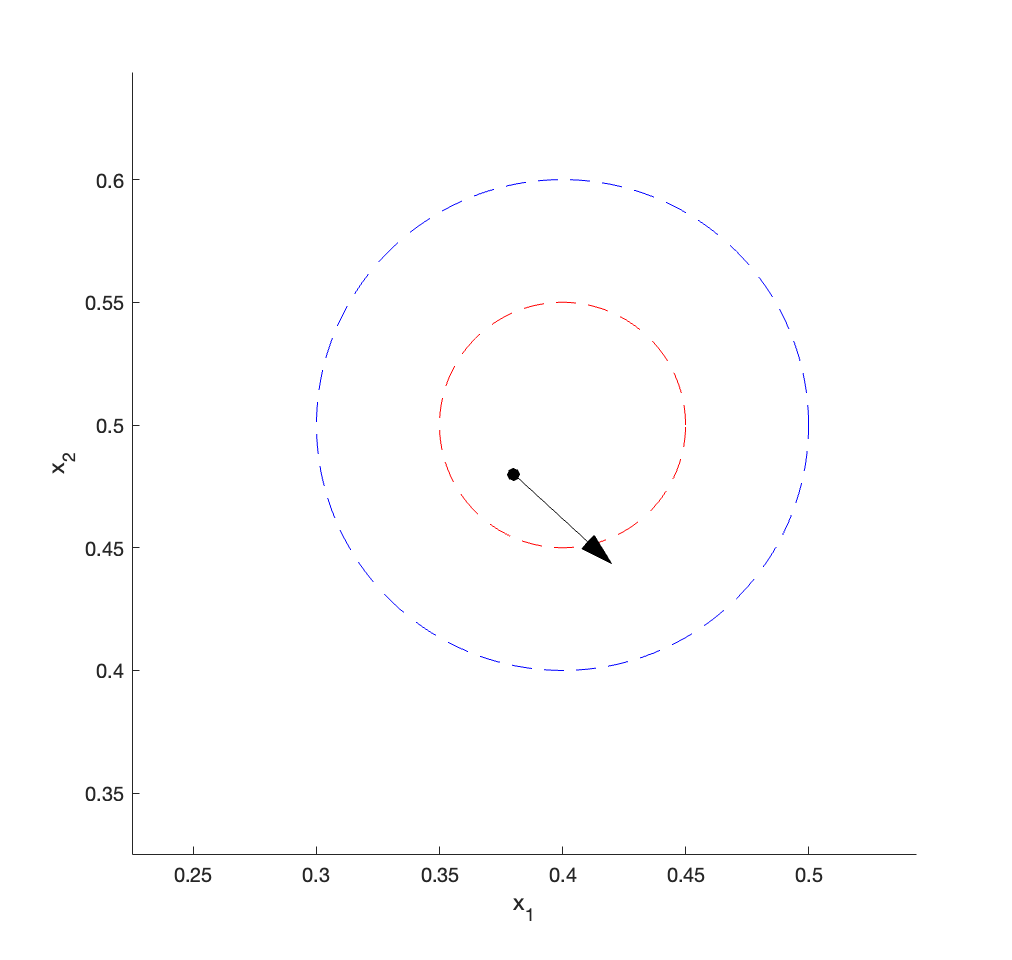}
  \caption{Step~3: $(x_1,\,x_2)$ drift inside the blue neighborhood.}
  \label{fig:sfig3_1}
\end{subfigure}
\begin{subfigure}{.5\textwidth}
  \centering
  \includegraphics[width=.8\linewidth]{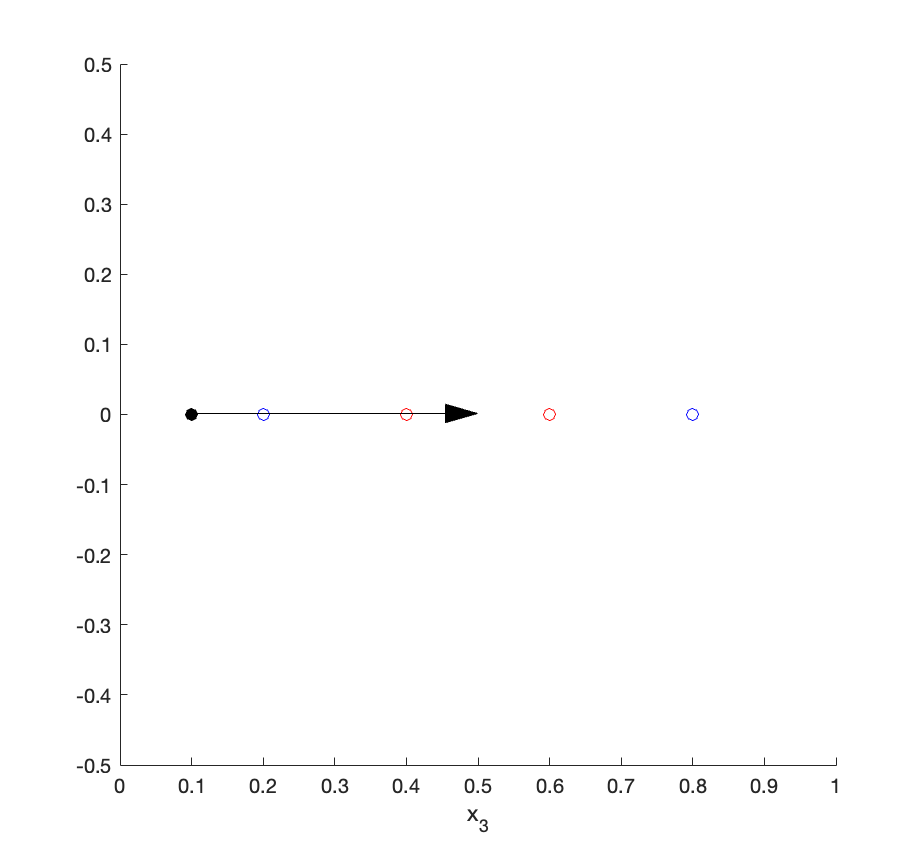}
  \caption{Step~3: $x_3$ move to the red neighborhood.}
  \label{fig:sfig3_2}
\end{subfigure}
\begin{subfigure}{.5\textwidth}
  \centering
  \includegraphics[width=.8\linewidth]{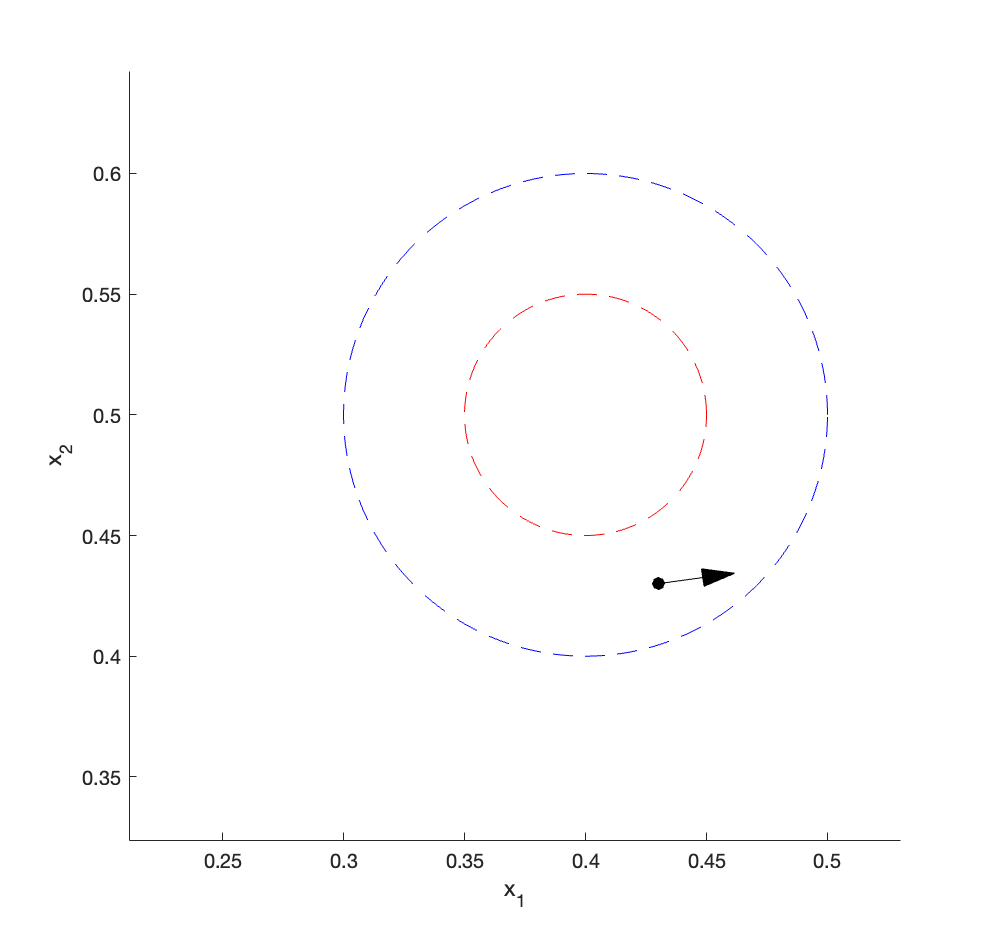}
  \caption{Step~4: $(x_1,\,x_2)$ drift inside the blue neighborhood.}
  \label{fig:sfig4_1}
\end{subfigure}
\begin{subfigure}{.5\textwidth}
  \centering
  \includegraphics[width=.8\linewidth]{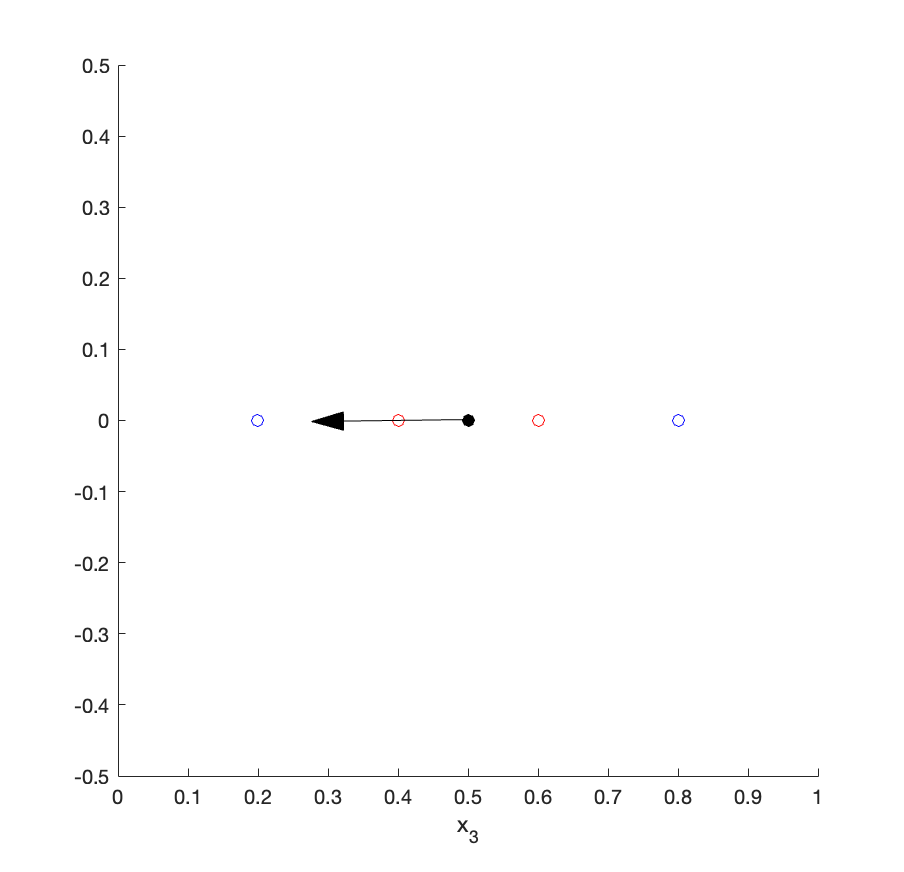}
  \caption{Step~4: $x_3$ drift inside the blue neighborhood.}
  \label{fig:sfig4_2}
\end{subfigure}
\caption{In these pictures, the blue region represents the target neighborhood, the red region represents the smaller neighborhood inside, and the black line represents the trajectory.}\label{irreducibilitypic1}
\end{figure}

To control the projective process, our strategy is to split the control problem into two parts, one on the unit sphere, and the other one on the base manifold. To coordinate the two control problems, we use the topological property of the even-dimensional sphere, hairy ball theorem. First, we move the component on the sphere to the points where $\nabla_xX_0v$ vanishes. Second, we use $X_0$ and $X_1$ to achieve the control in the $(x_1,x_2)$ component of the base manifold. Finally, we finish the control problem on the sphere and the $x_3$ component of the base manifold to close the argument. 

Let us make a few observations. Before projecting onto the tangent plane of $S_x\mathbb{T}^3$, the $v$-component of $\tilde{X}_0$ is
\begin{equation*}
\nabla_x X_0\,v = \begin{bmatrix}
-v_3\sin{x_3} & v_3\cos{x_3} & 0
\end{bmatrix}.
\end{equation*}
Using the diffusion vector field $\tilde{X}_1$, we can move instantaneously along the $x_3$-direction; therefore it makes sense to consider the following auxilliary vector fields by letting $x_3$ be $0$, $\pi/2$, $\pi$ and $3\pi/2$,  
\begin{align*}
Y_{0,1} &\coloneqq \left(1-\Pi_v\right)\nabla_x X_0(x_1,x_2, 0 )v = \left(1-\Pi_v\right)\begin{bmatrix}
\phantom{-}0 & \phantom{-}v_3 & 0
\end{bmatrix}, \\[3pt]
Y_{0,2} &\coloneqq \left(1-\Pi_v\right)\nabla_x X_0(x_1,x_2,\pi/2)v = \left(1-\Pi_v\right)\begin{bmatrix}
-v_3 & \phantom{-}0 & 0
\end{bmatrix}, \\[3pt]
Y_{0,3} &\coloneqq \left(1-\Pi_v\right)\nabla_x X_0(x_1,x_2, \pi)v = \left(1-\Pi_v\right)\begin{bmatrix}
\phantom{-}0 & -v_3 & 0
\end{bmatrix}, \\[3pt]
Y_{0,4} &\coloneqq \left(1-\Pi_v\right)\nabla_x X_0(x_1,x_2, 3\pi/2)v = \left(1-\Pi_v\right)\begin{bmatrix}
\phantom{-}v_3 & \phantom{-}0 & 0
\end{bmatrix}.
\end{align*}
One can see that these vector fields vanish when $v_3=0$, and they allow us to move in the directions transversal to $v_3$-direction when $v_3\neq 0$. Next, we obtain from the $v$-component of the diffusion vector fields,  
\begin{align*}
Y_1 &= v_1\,\begin{bmatrix}
-v_3v_1 & -v_3v_2 & 1-(v_3)^2
\end{bmatrix}, \\[3pt]
Y_2 &= v_2\,\begin{bmatrix}
-v_3v_1 & -v_3v_2 & 1-(v_3)^2
\end{bmatrix}.
\end{align*}
Define the auxiliary vector fields $\mathcal{G} = \cup_{k=1}^4Y_{0,k} \,\cup\, \mbox{span}\left\{Y_1, Y_2\right\}$ on $\mathbb{S}^2$. An important observation is 
\begin{theorem}\label{irreducibilitythm2}
There exists $T>0$ such that
\begin{equation*}
\{v_3 =0\} \subseteq \mathcal{A}_{\mathcal{G}}(v, \leq T) \quad\forall v\in\mathbb{S}^2.
\end{equation*} 
Moreover, $\forall\epsilon>0$
\begin{equation*}
\overline{\mathcal{A}_{\mathcal{G}}(\{v_3 =0\}, \leq \epsilon)} = \mathbb{S}^2.
\end{equation*}
\end{theorem}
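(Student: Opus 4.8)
The plan is to prove both parts by exhibiting explicit integral curves of $\mathcal{G}$, assembled from three elementary motions. Write $\operatorname{grad}(v_i)$ for the gradient on $\mathbb{S}^2$ of the coordinate function $v\mapsto v_i$ (the tangential part of the $i$-th standard unit vector), so that $Y_{0,1}=v_3\operatorname{grad}(v_2)$, $Y_{0,2}=-v_3\operatorname{grad}(v_1)$, $Y_{0,3}=-v_3\operatorname{grad}(v_2)$, $Y_{0,4}=v_3\operatorname{grad}(v_1)$, $Y_1=v_1\operatorname{grad}(v_3)$, $Y_2=v_2\operatorname{grad}(v_3)$. Motion (a): along $\operatorname{span}\{Y_1,Y_2\}$ the coefficient $c_1v_1+c_2v_2$ runs over all of $\mathbb{R}$, with arbitrarily large modulus, whenever $v\notin\{v_3=\pm1\}$, so one may flow along $\pm\operatorname{grad}(v_3)$ at any speed; such a flow stays on the fixed meridian through $v$ and the two poles, and since it is explicitly integrable ($v_3/\sqrt{1-v_3^2}$ is affine in time) it reaches any prescribed height $v_3^\ast\in(-1,1)$ in a time comparable to $1/(\text{speed})$, hence in arbitrarily short time. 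Motion (b): along $Y_{0,4}$ one has $\dot v_1=v_3(1-v_1^2)$ and $\dot v_3=-v_1v_3^2$; the poles are not equilibria, $v_1$ is monotone, and once $v_1>0$ one gets $\dot v_3<0$, so any point with $v_3>1/2$ descends to $\{v_3=1/2\}$ in uniformly bounded time (and symmetrically for $v_3<-1/2$, using $-Y_{0,4}$). Motion (c): along $\pm Y_j$ with $j\in\{1,2\}$ chosen so that $v_j^2\ge\tfrac12(1-v_3^2)$ and the sign of the coefficient chosen so that $\dot v_3$ is opposite in sign to $v_3$, one has $\tfrac{d}{dt}|v_3|\le-|v_j|(1-v_3^2)$ with $|v_j|$ non-decreasing, so any point with $|v_3|\le1/2$ reaches the equator in uniformly bounded time.

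For the uniform-time inclusion, I would fix a small $\delta_0>0$ and argue in two steps. Step A: from an arbitrary $v$, reach the polar cap $\mathcal{C}=\{v_3\ge1-\delta_0/2\}$ in a time $T_A$ independent of $v$. If $v$ is already in $\mathcal{C}$ there is nothing to do; if $v_3=-1$ run $-Y_{0,4}$ (Motion (b)) to reach $\{v_3=-1/2\}$; otherwise $(v_1,v_2)\ne0$, and climbing the meridian through $v$ with a sufficiently large multiple of $v_1(0)Y_1+v_2(0)Y_2$ (Motion (a)) reaches height $1-\delta_0/2$, hence $\mathcal{C}$, in time $\le1$. Step B: given $p'\in\mathcal{C}$ and an equator target $q=(\cos b,\sin b,0)$, first steer $p'$ within the cap to the point $q'\in\mathcal{C}$ lying on the meridian through $q$ at height $1-\delta_0/2$ (possible in a time $\tau_0=\tau_0(\delta_0)$, see below), then descend that meridian with a large negative multiple of $(\cos b)Y_1+(\sin b)Y_2$; by Motion (a) this reaches height $0$ in time $\le1$, landing exactly at $q$. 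Taking $T:=T_A+\tau_0+2$ yields $\{v_3=0\}\subseteq\mathcal{A}_{\mathcal{G}}(v,\le T)$ for every $v$.

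The in-cap steering is the crux, and I expect it to be the main technical obstacle: one must turn the heuristic ``near the pole the $Y_{0,k}$ are almost the constant planar fields $\pm(1,0,0),\pm(0,1,0)$, so free translation is available'' into an exact reachability statement with a uniform time bound and with the trajectory confined to $\mathcal{C}$. The clean route: for $\delta_0$ small, $Y_{0,1}$ and $Y_{0,4}$ are linearly independent at every point of the open cap $\mathcal{U}=\{v_3>1-\delta_0\}$ (their cross product is close to $(0,0,1)$), and $-Y_{0,1},-Y_{0,4}\in\mathcal{G}$, so the restricted family is symmetric and bracket-generating on $\mathcal{U}$; Chow's theorem together with compactness of $\overline{\mathcal{C}}\subset\mathcal{U}$ gives that any two points of $\mathcal{C}$ are joined by a $\mathcal{G}$-integral curve of uniformly bounded duration, and the bound $|\dot v_3|\le|v_1|\lesssim\sqrt{\delta_0}$ along these flows keeps the curve in $\mathcal{U}$. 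The remaining ingredients are one-dimensional monotonicity arguments for $v_3$ and $v_1$ along explicitly integrable flows.

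For the second part, let $w\in\mathbb{S}^2$ and $\epsilon>0$. Since $\pm(0,0,1)$ lie in the closure of the non-polar points it suffices to treat $(w_1,w_2)\ne0$; set $p:=(w_1,w_2,0)/\sqrt{w_1^2+w_2^2}\in\{v_3=0\}$, so the meridian through $p$ and the poles contains $w$. Flowing from $p$ along a sufficiently large-modulus multiple of $w_1Y_1+w_2Y_2$, with sign $\operatorname{sgn}(w_3)$, stays on that meridian and, by Motion (a), reaches height $v_3=w_3$ — i.e. the point $w$ — in time $\le\epsilon$. Hence $w\in\mathcal{A}_{\mathcal{G}}(\{v_3=0\},\le\epsilon)$, and passing to closures gives $\overline{\mathcal{A}_{\mathcal{G}}(\{v_3=0\},\le\epsilon)}=\mathbb{S}^2$.
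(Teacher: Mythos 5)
Your argument is correct, and although it shares the paper's overall architecture --- use $\operatorname{span}\{Y_1,Y_2\}$ to control the $v_3$-direction, and a spanning pair of fields plus compactness to control the transversal (longitude) direction --- the decomposition and the key computations are genuinely different. You observe that every element of $\operatorname{span}\{Y_1,Y_2\}$ is a scalar multiple of $\operatorname{grad}(v_3)$, integrate the resulting meridian flow in closed form (with $v_3/\sqrt{1-v_3^2}$ affine in time), and thereby obtain travel along any fixed meridian to any prescribed height in arbitrarily short time; this single observation subsumes the paper's Lemmas~\ref{irreducibilitylemma1} and~\ref{irreducibilitylemma2}, which are instead established by a qualitative phase-plane analysis of the separate orbits of $Y_1$ and $Y_2$ together with the quantitative hitting estimate (\ref{hit}). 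For the transversal motion you use only the pair $Y_{0,1},Y_{0,4}$ (whose cross product equals $v_3^3\,v$, hence nonzero off the equator), which together with $Y_{0,3}=-Y_{0,1}$ and $Y_{0,2}=-Y_{0,4}$ is a symmetric spanning family on a polar cap, so Chow--Rashevskii plus compactness gives uniform-time exact controllability there; the paper instead works in the strips $S_{\epsilon,\pm}$, bounded away from both poles and equator, uses the mixed pairs $\{Y_{0,k},Y_l\}$, and carries out the Chow-type chaining by hand via the inverse function theorem. Your route buys shorter, fully explicit computations and a cleaner proof of the density statement; the paper's buys self-containedness. Two minor remarks: (i) your parenthetical that the bound $\left|\dot v_3\right|\lesssim\sqrt{\delta_0}$ ``keeps the curve in $\mathcal{U}$'' is not by itself a valid justification, since the uniform steering time $\tau_0(\delta_0)$ produced by compactness need not be $O(\sqrt{\delta_0})$; the correct point is simply that Chow--Rashevskii applied to the connected open manifold $\mathcal{U}$ produces joining curves that stay in $\mathcal{U}$ by construction. (ii) Motion~(c) and most of Motion~(b) are superfluous: at a pole it suffices, as in the paper, to flow along $Y_{0,4}$ for a fixed short time to leave the pole and then invoke Motion~(a).
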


The control problem of the auxiliary family $\mathcal{G}$ is closely related to projective irreducibility. First, we show that we can approximate the integral curves of $\mathcal{G}$ arbitrarily well with the $v$-component of vector fields in $\mathcal{F} = \tilde{X}_0 + \tilde{\mathcal{X}}$. 

\begin{theorem}\label{irreducibilitythm3}
Let $w' = (x',v')$, $ w'' = (x'',v'') \in \mathbb{S}\mathbb{T}^3$. Let 
\begin{align*}
\varphi_t &= v' + \int_0^{t}Y(\varphi_s)\,ds \quad\mbox{for}\quad Y\in\{Y_{0,1},\,Y_{0,2},\,Y_{0,3},\,Y_{0,4}\} \\
\phi_{t/\gamma} &= v' + \int_0^{t/\gamma}\gamma Y(\phi_{s})\,ds \quad\mbox{for}\quad Y\in\mbox{span}\{Y_1, Y_2\}, \quad \gamma\in\mathbb{R}.
\end{align*}
Denote $M$ and $L$ constants depending on the sup-norm and the Lipschitz constant of the vector fields. Then for all $0 < \delta \ll 1$, there exist controls $\alpha$ and $\beta$ such that $\psi_{t+\delta}^{\alpha}(w'')$ and $\psi_{t/\gamma}^{\beta}(w'')$ are the integral curves of $\mathcal{F}$ with initial $w''$ under the controls $\alpha$ and $\beta$ respectively, and  
\begin{align}
\left\vert \left(\psi_{t+\delta}^{\alpha, 4},\,\psi_{t+\delta}^{\alpha, 5},\,\psi_{t+\delta}^{\alpha, 6}\right) - \varphi_t \right\vert &\leq \left(\left\vert v'' - v'\right\vert + \delta M \right) e^{Lt} \label{irreducibilityeq7} \\[3pt]
\left\vert \left(\psi_{t/\gamma}^{\beta,4},\,\psi_{t/\gamma}^{\beta, 5},\,\psi_{t/\gamma}^{\beta, 6}\right) - \phi_{t/\gamma}\right\vert 
&\leq \left( \frac{Mt}{\gamma}  + \vert v' - v''\vert\right) e^{2Lt}. \label{irreducibilityeq8}
\end{align}
\end{theorem}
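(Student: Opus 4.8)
The plan is to realize each auxiliary integral curve on $\mathbb{S}^2$ as the fibre ($v$-)component of a genuine integral curve of $\mathcal{F}$ on $\mathbb{S}\mathbb{T}^3$, exploiting the special structure of the control vector fields in \eqref{applicataioneq1}--\eqref{applicataioneq6}, and then to close each estimate with Gr\"onwall's inequality; this is precisely what is needed to transfer the controllability of the auxiliary family $\mathcal{G}$ (Theorem~\ref{irreducibilitythm2}) to the projective process. Throughout, for a curve $\psi=(\psi^1,\dots,\psi^6)$ on $\mathbb{S}\mathbb{T}^3$ write $\psi_v=(\psi^4,\psi^5,\psi^6)$ for its fibre component, let $M$ bound the fibre components $(\tilde{X}_k)_v$ of the control fields, and let $L$ be a common Lipschitz constant.

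\textbf{Proof of \eqref{irreducibilityeq7}.} Two structural facts are in play: $\tilde{X}_1=[0,0,1]\cdot\nabla_x$ moves only $x_3$ and has vanishing fibre component; and by \eqref{applicataioneq1} the fibre component of $\tilde{X}_0$, namely $(1-\Pi_v)\nabla_x X_0\,v$, depends on the base point only through $x_3$, reducing exactly to $Y_{0,1},\dots,Y_{0,4}$ when $x_3$ is set to $0,\pi/2,\pi,3\pi/2$. Accordingly, build $\alpha$ in two stages starting from $w''$. In stage one, of length $\delta$, take $\alpha_1$ equal to a constant of order $1/\delta$ and $\alpha_2=\cdots=\alpha_5=0$, so that $x_3$ is rotated onto the prescribed value; since $\tilde{X}_1$ has zero fibre component, $\psi^\alpha_v$ moves by at most $M\delta$ during this stage, so afterwards $|\psi^\alpha_v-v'|\le|v''-v'|+M\delta$. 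In stage two, of length $t$, take $\alpha_1\equiv-1$ and the remaining controls $0$; then $\dot{x}_3=1-1=0$, so $x_3$ stays frozen and $\psi^\alpha_v$ solves the same ODE $\dot{u}=Y_{0,k}(u)$ as $\varphi$. Gr\"onwall on $[0,t]$ then gives $|(\psi^\alpha_{t+\delta})_v-\varphi_t|\le(|v''-v'|+M\delta)e^{Lt}$. Both controls are piecewise constant, hence admissible for the Stroock--Varadhan support theorem.

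\textbf{Proof of \eqref{irreducibilityeq8}.} Here the fast-flow rescaling takes the place of the jump stage. Write $Y=c_1Y_1+c_2Y_2$. By Lemma~\ref{represent} the fibre components of $\tilde{X}_2,\tilde{X}_3$ are $\cos x_1\,Y_1$ and $-\sin x_1\,Y_1$, and those of $\tilde{X}_4,\tilde{X}_5$ are $\cos x_2\,Y_2$ and $-\sin x_2\,Y_2$. Take $\beta_1\equiv0$ and the constant controls $\beta_2=\gamma c_1\cos x_1''$, $\beta_3=-\gamma c_1\sin x_1''$, $\beta_4=\gamma c_2\cos x_2''$, $\beta_5=-\gamma c_2\sin x_2''$; then the fibre part of the resulting integral curve started at $w''$ obeys $\frac{d}{ds}(\psi^\beta_v)=(\tilde{X}_0)_v(\psi^\beta)+\gamma\big(c_1\cos(x_1-x_1'')\,Y_1+c_2\cos(x_2-x_2'')\,Y_2\big)(\psi^\beta_v)$. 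Because the $x$-parts of $\tilde{X}_2,\dots,\tilde{X}_5$ involve only $x_3$, the coordinates $x_1,x_2$ are moved solely by $\tilde{X}_0$ and so drift by at most $t/\gamma$ over $[0,t/\gamma]$; hence $\cos(x_i-x_i'')=1+O(t^2/\gamma^2)$ and the right-hand side above equals $(\tilde{X}_0)_v(\psi^\beta)+\gamma Y(\psi^\beta_v)+O(t^2/\gamma)$. Comparing this with $\dot{\phi}_{\cdot/\gamma}=\gamma Y(\phi_{\cdot/\gamma})$ and running Gr\"onwall on $[0,t/\gamma]$ — where the Lipschitz constant $\gamma L$ produces the factor $e^{\gamma L\,t/\gamma}=e^{Lt}$, while the bounded forcing $(\tilde{X}_0)_v$ of size $\le M$ contributes at most $Mt/\gamma$ — yields, for $\gamma$ large enough, $|(\psi^\beta_{t/\gamma})_v-\phi_{t/\gamma}|\le(Mt/\gamma+|v'-v''|)e^{2Lt}$, the extra exponential absorbing the lower-order $O(t^2/\gamma^2)$ perturbation.

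\textbf{Main obstacle.} The real content is in \eqref{irreducibilityeq8}: one must verify that over the short fast flow of duration $t/\gamma$ the uncontrollable drift $\tilde{X}_0$ perturbs the fibre dynamics only at order $1/\gamma$ while displacing $x_1,x_2$ by only $O(1/\gamma)$, so that the prefactors multiplying $Y_1$ and $Y_2$ stay within $1+O(1/\gamma^2)$ of their intended constant values. The ``separation of variables'' in the control fields — $x_1,x_2$ untouched by $\tilde{X}_2,\dots,\tilde{X}_5$, and the fibre component of $\tilde{X}_0$ independent of $x_1,x_2$ — is precisely what makes these two points work; once they are secured, the rest is a routine Gr\"onwall comparison.
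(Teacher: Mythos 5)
Your proposal is correct, and for the first estimate \eqref{irreducibilityeq7} it coincides with the paper's argument: a jump stage of length $\delta$ driven by $\tilde{X}_1$ (which has zero fibre component, so the fibre drifts by at most $M\delta$ under $\tilde{X}_0$ alone), followed by the control $\alpha_1\equiv -1$ that freezes $x_3$ so the fibre exactly solves $\dot u = Y_{0,k}(u)$, with Gr\"onwall propagating the initial discrepancy.

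For \eqref{irreducibilityeq8} you take a genuinely different, and in one respect cleaner, route. The paper reduces WLOG to $Y=Y_1$, notes that one of $|\cos x_1''|,|\sin x_1''|$ is at least $1/2$, and uses the single control $\beta=\tfrac{\gamma}{\cos x_1''}\mathbf e_2$; it then introduces an auxiliary trajectory $\tilde\psi$ driven by $\tfrac{\gamma}{\cos x_1''}\tilde X_2$ alone, whose $x_1$-coordinate is exactly frozen (so its fibre solves $\dot u=\gamma Y_1(u)$ exactly), and runs two Gr\"onwall comparisons: $\psi^\beta$ vs.\ $\tilde\psi$ (error $Mt/\gamma$ from $\tilde X_0$) and $\tilde\psi_v$ vs.\ $\phi$ (error $|v'-v''|$). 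You instead activate the cosine--sine pair of controls so that the effective fibre coefficient is $\cos(x_i-x_i'')$, which equals $1$ at $s=0$ and stays within $O(t^2/\gamma^2)$ because $x_1,x_2$ are moved only by $\tilde X_0$. This handles a general $Y=c_1Y_1+c_2Y_2$ in one stroke and avoids dividing by a possibly small trigonometric factor, at the price of an extra $O(t^3/\gamma^2)$ forcing term that must be absorbed into the exponential --- which, as you note, requires $\gamma$ large enough relative to $t$. That restriction is harmless here since Corollary~\ref{irreducibilitycor1} only ever invokes the bound with $\gamma\to\infty$, but strictly speaking the paper's version of \eqref{irreducibilityeq8} holds for every $\gamma$ while yours holds for $\gamma$ sufficiently large (or with a slightly adjusted constant). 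Both arguments rest on the same two structural facts you isolate at the end: the control fields do not move $x_1,x_2$, and the fibre component of $\tilde X_0$ sees the base only through $x_3$.
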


\begin{proof}
If $Y\in\{Y_{0,1},\,Y_{0,2},\,Y_{0,3},\,Y_{0,4}\}$, we may consider $Y=Y_{0,1}$ without loss of generality. We pick the control $\alpha = (\alpha^1,\dotsc, \alpha^5): [0, \infty)\rightarrow \mathbb{R}^5$,
\begin{equation*}
\alpha(s) = -(1+\frac{x_3''}{\delta})\mathbf{e}_1\chi_{[0,\delta)}(s) - \mathbf{e}_1\chi_{[\delta,\infty)}(s)
\end{equation*}
where $\mathbf{e}_1\in\mathbb{R}^5$ is the unit vector in the first entry; this control moves the $x_3$ component from $x_3''$ to $0$ within time $\delta$, and then fixes the $x_3$ component after time $\delta$. Let
\begin{align*}
\psi_{t+\delta}^{\alpha}(w'') &= w'' + \int_0^{t+\delta}  \tilde{X}_0(\psi_s^{\alpha}(w''))\,ds + \int_0^{t+\delta}\tilde{X}_1(\psi_s^{\alpha}(w''))\,\alpha^1(s)ds \\[3pt]
\tilde{\psi}_t &= \psi_{\delta}^{\alpha}(w'') + \int_0^{t}\tilde{X}_0(\tilde{\psi}_s)\,ds - \int_0^t\tilde{X}_1(\tilde{\psi}_s)\,ds \\[3pt]
\varphi_t(v') &= v' + \int_0^{t}Y_{0,1}(\varphi_s(v'))\,ds,
\end{align*}
where $\psi_{\cdot}^{\alpha}(w'')$ is the trajectory under the control $\alpha$ with initial $w''$, $\tilde{\psi}_{\cdot}$ is an auxiliary trajectory for estimation, and $\varphi_{\cdot}(v')$ is the trajectory we aim to approximate. We have
\begin{equation*}
\left(\tilde{\psi}_t^4,\,\tilde{\psi}_t^5,\,\tilde{\psi}_t^6\right)  - \varphi_t = \left(\psi_{\delta}^{\alpha,4}(w''),\,\psi_{\delta}^{\alpha,6}(w''),\,\psi_{\delta}^{\alpha,6}(w'')\right) - v' + \int_0^{t}Y_{0,1}(\tilde{\psi}_s)\,ds -  \int_0^{t}Y_{0,1}(\varphi_s)\,ds,
\end{equation*}
which by Gr\"{o}nwall inequality implies
\begin{align}
\left\vert\left(\tilde{\psi}_t^4,\,\tilde{\psi}_t^5,\,\tilde{\psi}_t^6\right)  - \varphi_t \right\vert &\leq \left\vert\left(\psi_{\delta}^{\alpha,4}(w''),\,\psi_{\delta}^{\alpha,5}(w''),\,\psi_{\delta}^{\alpha,6}(w'')\right) - v'\right\vert e^{Lt} \\[3pt]
&\leq \left(\left\vert v'' - v'\right\vert + \left\vert \int_0^{\delta}  \tilde{X}_0(\psi_s^{\alpha}(w''))\,ds\right\vert \right) e^{Lt}, \label{irreducibilitythm3eq1}
\end{align}
where $L$ depends on the Lipschitz constant of the vector fields, and the integral involving $\tilde{X}_1$ vanishes due to $(\tilde{X}_1^4,\tilde{X}_1^5,\tilde{X}_1^6) = 0$. Also, we have
\begin{equation}\label{irreducibilitythm3eq2}
\psi_{t+\delta}^{\alpha}(w'') = \tilde{\psi}_t
\end{equation}
by construction. Combining (\ref{irreducibilitythm3eq1}) and (\ref{irreducibilitythm3eq2}),
\begin{align*}
\left\vert \left(\psi_{t+\delta}^{\alpha, 4},\,\psi_{t+\delta}^{\alpha, 5},\,\psi_{t+\delta}^{\alpha, 6}\right) - \varphi_t \right\vert &= \left\vert\left(\tilde{\psi}_t^4,\,\tilde{\psi}_t^5,\,\tilde{\psi}_t^6\right)  - \varphi_t \right\vert \\[3pt]
& \leq \left(\left\vert v'' - v'\right\vert + \left\vert \int_0^{\delta}  \tilde{X}_0(\psi_s^{\alpha}(w''))\,ds\right\vert \right) e^{Lt} \leq \left(\left\vert v'' - v'\right\vert + \delta M \right) e^{Lt},
\end{align*}
where $M$ depends on the sup norm of the vector fields; this shows (\ref{irreducibilityeq7}).

If $Y\in\{Y_1,Y_2\}$, we may consider $Y= Y_1$ without loss of generality. At least one of $\cos^2{x_1''}$ and $\sin^2{x_1''}$ is greater than or equal to $1/2$. For definiteness, we assume $\left\vert\cos{x_1''}\right\vert \geq \cos^2({x_1''}) \neq 1/2$. Given $\gamma >0$, we pick the  control $\beta = (\beta^1,\dotsc, \beta^5):[0,\infty)\rightarrow \mathbb{R}^5$, 
\[
\beta(s) = \frac{\gamma}{\cos{x_1''}}\mathbf{e_2},
\]
where $\mathbf{e}_2\in\mathbb{R}^5$ is the unit vector in the second entry. We then consider the following
\begin{align*}
\psi_{t/\gamma}^{\beta} &= w'' + \int_0^{t/\gamma}  \tilde{X}_0(\psi_s^{\beta})\,ds + \int_0^{t/\gamma}\frac{\gamma}{\cos{x_1''}}\tilde{X}_2(\psi_s^{\beta})\,ds \\[3pt]
\tilde{\psi}_{t/\gamma} &= w'' + \int_0^{t/\gamma}\frac{\gamma}{\cos{x_1''}}\tilde{X}_2(\tilde{\psi}_s)\,ds \\[3pt]
\phi_{t/\gamma} &= v' + \int_0^{t/\gamma}\gamma Y_1(\phi_{s})\,ds,
\end{align*}
where $\psi_{\cdot/\gamma}^{\beta}$ is the trajectory under the control $\beta$ with initial $w''$, $\tilde{\psi}_{\cdot/\gamma}$ is an auxiliary trajectory for estimation,  and $\phi_{\cdot/\gamma}$ is the trajectory we aim to approximate. 
\begin{equation*}
\psi_{t/\gamma}^{\beta} - \tilde{\psi}_{t/\gamma} =  \int_0^{t/\gamma}\tilde{X}_0(\psi_s^{\beta})\,ds + \int_0^{t/\gamma}\frac{\gamma}{\cos{x_1''}}\tilde{X}_2(\psi_s^{\beta})\,ds - \int_0^{t/\gamma}\frac{\gamma}{\cos{x_1''}}\tilde{X}_2(\tilde{\psi}_s)\,ds, 
\end{equation*}
which we may apply Gr\"{o}nwall inequality to get
\begin{equation}\label{irreducibilitythm3eq3}
\left\vert \psi_{t/\gamma}^{\beta} - \tilde{\psi}_{t/\gamma}\right\vert \leq  \frac{Mt}{\gamma} e^{Lt/\vert\cos{x_1''}\vert}.
\end{equation}
Also, 
\begin{align*}
\left(\tilde{\psi}_{t/\gamma,v}^4,\,\tilde{\psi}_{t/\gamma,v}^5,\,\tilde{\psi}_{t/\gamma,v}^6\right) &- \phi_{t/\gamma} = v'' - v' +  \int_0^{t/\gamma}\frac{\gamma}{\cos{x_1''}}\left(\tilde{X}_2^4,\,\tilde{X}_2^5,\,\tilde{X}_2^6\right)(\tilde{\psi}_s)\,ds - \int_0^{t/\gamma}\gamma Y_1(\phi_s)\,ds \\[3pt]
&= v'' - v' + \int_0^{t/\gamma}\frac{\cos{\tilde{\psi}_s^1}}{\cos{x_1''}}\gamma Y_1\left((\tilde{\psi}_{t/\gamma,v}^4,\,\tilde{\psi}_{t/\gamma,v}^5,\,\tilde{\psi}_{t/\gamma,v}^6)\right)\,ds - \int_0^{t/\gamma}\gamma Y_1(\phi_s)\,ds \\[3pt]
&= v'' - v' + \int_0^{t/\gamma} \gamma \left[ Y_1\left((\tilde{\psi}_{t/\gamma,v}^4,\,\tilde{\psi}_{t/\gamma,v}^5,\,\tilde{\psi}_{t/\gamma,v}^6)\right) - Y_1(\phi_s)\right]\, ds,
\end{align*}
where the last equality is due to the $x_1$-component of $\tilde{X}_2$ is zero, and so $\tilde{\psi}_s^1 = x_1''$. Again, by Gr\"{o}nwall inequality
\begin{equation}\label{irreducibilitythm3eq4}
\left\vert\left(\tilde{\psi}_{t/\gamma,v}^4,\,\tilde{\psi}_{t/\gamma,v}^5,\,\tilde{\psi}_{t/\gamma,v}^6\right) - \phi_{t/\gamma} \right\vert \leq \vert v' - v''\vert e^{Lt}.
\end{equation}
Combining (\ref{irreducibilitythm3eq3}) and (\ref{irreducibilitythm3eq4}), we have 
\begin{align*}
\left\vert \left(\psi_{t/\gamma}^{\beta,4},\,\psi_{t/\gamma}^{\beta, 5},\,\psi_{t/\gamma}^{\beta, 6}\right) - \phi_{t/\gamma}\right\vert &\leq \left\vert \psi_{t/\gamma}^{\beta} - \tilde{\psi}_{t/\gamma}\right\vert + \left\vert\left(\tilde{\psi}_{t/\gamma,v}^4,\,\tilde{\psi}_{t/\gamma,v}^5,\,\tilde{\psi}_{t/\gamma,v}^6\right) - \phi_{t/\gamma} \right\vert \\[3pt]
&\leq \frac{Mt}{\gamma} e^{Lt/\vert\cos{x}_1''\vert} + \vert v' - v''\vert e^{Lt} \\[3pt]
&\leq \left( \frac{Mt}{\gamma}  + \vert v' - v''\vert\right) e^{Lt/\vert\cos{x}_1''\vert} \\[3pt]
&\leq \left( \frac{Mt}{\gamma}  + \vert v' - v''\vert\right) e^{2Lt}.
\end{align*}
\end{proof}

As a corollary, we have the integral curves of $\mathcal{G}$ can be approximated arbitrarily well by the projection of integral curves of $\mathcal{F}$ on to $\mathbb{S}^2$. 

\begin{corollary}\label{irreducibilitycor1}
Let $\varphi_{\cdot}: [0,\,T]\rightarrow \mathbb{S}^2$ be an integral curve of $\mathcal{G}$ with initial $v'$, and $U$ be an open neighborhood of $\varphi_T$ in $\mathbb{S}^2$. Let $x'\in\mathbb{T}^3$ and $w'= (x',v')$. Then for all $\delta>0$, there exists $0 < T'\leq  T+\delta$, and an integral curve of $\mathcal{F}$, $\psi:[0,\, T']\rightarrow \mathbb{ST}^3$, such that
\begin{itemize}
\item $\psi_0 = w'$,
\item $\left( \psi_{T'}^4,\, \psi_{T'}^5,\, \psi_{T'}^6\right) \in U$.
\end{itemize}
\end{corollary}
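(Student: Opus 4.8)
The strategy is to write $\varphi$ as a concatenation of integral-curve segments of the auxiliary family $\mathcal{G}$, replace each such segment by a segment of an $\mathcal{F}$-integral curve via Theorem~\ref{irreducibilitythm3}, and then splice these together while tracking how the error in the $v$-component accumulates. First, by the definition of an integral curve of $\mathcal{G}$, there is a partition $0 = t_0 < t_1 < \dots < t_m = T$ and vector fields $V_1,\dots,V_m \in \mathcal{G} = \bigcup_{k=1}^4 \{Y_{0,k}\} \cup \operatorname{span}\{Y_1,Y_2\}$ with $\dot\varphi(s) = V_i(\varphi(s))$ for $s \in (t_{i-1}, t_i)$. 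Set $\tau_i = t_i - t_{i-1}$; crucially, $m$ and the $\tau_i$ depend only on the fixed curve $\varphi$, not on $\delta$. Shrinking $U$ if necessary, we may assume $U = B(\varphi_T, r) \cap \mathbb{S}^2$ for some $r > 0$.

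Next, we build $\psi$ inductively. Let $\sigma_0 = 0$ and $\psi_0 = w'$. Suppose $\psi$ has been defined as an integral curve of $\mathcal{F}$ on $[0,\sigma_{i-1}]$ with $|(\psi^4_{\sigma_{i-1}},\psi^5_{\sigma_{i-1}},\psi^6_{\sigma_{i-1}}) - \varphi(t_{i-1})| \le \epsilon_{i-1}$ (with $\epsilon_0 = 0$). Apply Theorem~\ref{irreducibilitythm3} with $w'' = \psi_{\sigma_{i-1}}$ and target curve the $V_i$-integral curve issued from $\varphi(t_{i-1})$, run over target-time $\tau_i$. If $V_i = Y_{0,k}$, the corresponding $\mathcal{F}$-segment has physical length $\tau_i + \delta_i$ for a small parameter $\delta_i > 0$, and \eqref{irreducibilityeq7} gives $\epsilon_i \le (\epsilon_{i-1} + M\delta_i)\,e^{L\tau_i}$. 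If instead $V_i \in \operatorname{span}\{Y_1,Y_2\}$, the $\mathcal{F}$-segment has physical length $\tau_i/\gamma_i$ for a large parameter $\gamma_i$, and \eqref{irreducibilityeq8} gives $\epsilon_i \le (\epsilon_{i-1} + M\tau_i/\gamma_i)\,e^{2L\tau_i}$. In either case we append this segment to $\psi$, obtaining an integral curve of $\mathcal{F}$ on $[0,\sigma_i]$ whose $v$-component lies within $\epsilon_i$ of $\varphi(t_i)$; here $M, L$ are the fixed sup-norm and Lipschitz constants appearing in Theorem~\ref{irreducibilitythm3}, uniform over the finitely many segments since they depend only on the (fixed) vector fields.

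Finally, unrolling the recursion from $\epsilon_0 = 0$ gives $\epsilon_m \le e^{2LT}\sum_{i=1}^m \eta_i$, where $\eta_i$ equals $M\delta_i$ or $M\tau_i/\gamma_i$ according to the type of $V_i$. Since $m$ is frozen once $\varphi$ is fixed, we may choose each $\delta_i$ small enough and each $\gamma_i$ large enough that $\epsilon_m < r$; then $T' := \sigma_m$ and $\psi$ satisfy $(\psi^4_{T'},\psi^5_{T'},\psi^6_{T'}) \in U$. For the time bound, the $\operatorname{span}\{Y_1,Y_2\}$-type segments have length $\le \tau_i$, so $T' \le \sum_i \tau_i + \sum_{i:\,V_i = Y_{0,k}} \delta_i = T + \sum \delta_i$; additionally imposing $\delta_i \le \delta/m$ gives $T' \le T + \delta$. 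If $T > 0$ then $T' > 0$ automatically; if $T = 0$ then $\varphi \equiv v'$, and we simply take $\psi$ to be the $\tilde X_0$-flow from $w'$ on a short enough interval that its $v$-component stays in $U$. The only real work is the bookkeeping in this last paragraph: one must note that the amplification factor $\prod_i e^{2L\tau_i} = e^{2LT}$ does not depend on $\delta$ (because $m$ is fixed), so that the freely adjustable per-segment errors $\eta_i$ can be pushed below any prescribed threshold.
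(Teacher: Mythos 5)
Your proposal is correct and follows essentially the same route as the paper: the paper also proves this by induction on the number of segments of $\varphi$, applying Theorem~\ref{irreducibilitythm3} to each segment (with the accelerated curve $\phi_{t/\gamma}$ for the $\operatorname{span}\{Y_1,Y_2\}$ case) and propagating the error through the Gr\"onwall factors, exactly as in your recursion $\epsilon_i \le (\epsilon_{i-1}+\eta_i)e^{2L\tau_i}$. Your write-up is if anything slightly more explicit about the bookkeeping (the fixed number of segments $m$, the uniform amplification factor $e^{2LT}$, the $T'\le T+\delta$ accounting, and the degenerate case $T=0$), all of which is consistent with the paper's argument.
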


\begin{proof}
Let $0= t_0 < t_1 < \dotsc < t_n = T$ be the partition such that for $m=1,\dotsc, n$
\begin{equation*}
\dot{\varphi}_t = Y_{k_m}(\varphi_t)\quad\mbox{for}\quad t_{m-1} < t < t_m,
\end{equation*}
and some $Y_{k_m}\in\mathcal{G}$. Given $\epsilon>0$, we will prove by induction on $n$ that there exist $0< T' \leq T+\delta$ and an integral curve $\psi$ of $\mathcal{F}$ such that
\begin{equation}\label{irreducibilitycor1eq1}
\left\vert\left( \psi_{T'}^4,\, \psi_{T'}^5,\, \psi_{T'}^6\right) - \varphi_{t_n} \right\vert < \epsilon.
\end{equation}

When $n=1$, there are two cases. For the first case,
\begin{equation*}
\varphi_{t_1} = v' + \int_0^{t_1}Y_{k_1}(\varphi_s)\,ds, \qquad Y_{k_1}\in\left\{ Y_{0,1},\,Y_{0,2},\,Y_{0,3},\, Y_{0,4}\right\},
\end{equation*}
then we can apply Theorem~\ref{irreducibilitythm3} and the fact that $\left(\psi_0^4,\,\psi_0^5,\,\psi_0^6\right) = v'$ to get a control $\alpha$ such that
\begin{equation*}
\left\vert \left(\psi_{t_1+\delta}^{\alpha, 4},\,\psi_{t_1+\delta}^{\alpha, 5},\,\psi_{t_1+\delta}^{\alpha, 6}\right) - \varphi_{t_1} \right\vert \leq  \delta M e^{Lt_1},
\end{equation*}
where $\delta>0$ can be arbitrarily small, and the claim is satisfied by taking $\delta= \epsilon M^{-1}e^{-Lt_1}$. For the second case
\begin{equation*}
\varphi_{t_1} = v' + \int_0^{t_1}cY_{k_1}(\varphi_s)\,ds, \qquad c\in\mathbb{R} \quad\mbox{and}\quad Y_{k_1}\in\left\{ Y_1,\,Y_2\right\},
\end{equation*}
and for $\gamma \gg 1$ we can consider the accelerated version of the integral curve,
\begin{equation*}
\phi_{t_1/\gamma} = v' + \int_0^{t_1/\gamma} \gamma c Y_{k_1}(\phi_{s})\,ds. 
\end{equation*}
Then we can apply Theorem~\ref{irreducibilitythm3} and the fact that $\left(\psi_0^4,\,\psi_0^5,\,\psi_0^6\right) = v'$ to get a control $\beta$ such that
\begin{equation*}
\left\vert \left(\psi_{t_1/\gamma}^{\beta,4},\,\psi_{t_1/\gamma}^{\beta, 5},\,\psi_{t_1/\gamma}^{\beta, 6}\right) - \phi_{t_1/\gamma}\right\vert 
\leq \frac{Mt_1}{\gamma}  e^{2Lt_1},
\end{equation*}
where $\gamma>0$ can be arbitrarily large, and the claim is satisfied by taking $\gamma = \epsilon^{-1}Mt_1 \exp{(2Lt_1)}$. Therefore, the base case is proved. 

Now, assume the hypothesis is true for $n=l$, so there exists $T''>0$ such that
\begin{equation*}
\left\vert\left( \psi_{T''}^4,\, \psi_{T''}^5,\, \psi_{T''}^6\right) - \varphi_{t_l} \right\vert < \epsilon;
\end{equation*} 
we will prove that it is also true for $n=l+1$. By taking $\left( \psi_{T''}^4,\, \psi_{T''}^5,\, \psi_{T''}^6\right)$ and $\varphi_{t_l}$ as our new initial conditions, we apply Theorem~\ref{irreducibilitythm3} to extend $\psi$ to get either 
\begin{equation*}
\left\vert \left(\psi_{T''+\delta}^{4},\,\psi_{T''+\delta}^{5},\,\psi_{T''+\delta}^{6}\right) - \varphi_{t_{l+1}} \right\vert \leq \left(\left\vert \left( \psi_{T''}^4,\, \psi_{T''}^5,\, \psi_{T''}^6\right) - \varphi_{t_l} \right\vert + \delta M \right) e^{L(t_{l+1}-t_l)}
\end{equation*}
for $\delta>0$ arbitrarily small, or
\begin{multline*}
\left\vert \left(\psi_{T''+(t_{l+1}-t_l)/\gamma}^{4},\,\psi_{T''+(t_{l+1}-t_l)/\gamma}^{5},\,\psi_{T''+(t_{l+1}-t_l)/\gamma}^{6}\right) - \phi_{t_{l+1}}\right\vert 
\\[3pt]
\leq \left( \left\vert \left( \psi_{T''}^4,\, \psi_{T''}^5,\, \psi_{T''}^6\right) - \varphi_{t_l} \right\vert + \frac{M(t_{l+1} - t_l)}{\gamma}\right) e^{2L(t_{l+1}-t_l)}
\end{multline*}
for $\gamma>0$ arbitrarily large. This concludes our proof. 
\end{proof}

As a consequence, we have the controllability for the projective process. Let us see how it is done before we move on to prove theorem~\ref{irreducibilitythm1}. 

\begin{theorem}
There exists $T'>0$ such that
\begin{equation*}
\overline{\mathcal{A}_{\mathcal{F}}(w, T')} = \mathbb{ST}^3. 
\end{equation*}
\end{theorem}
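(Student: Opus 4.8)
The plan is to prove the (slightly stronger‑looking) density statement $\overline{\mathcal{A}_{\mathcal{F}}(w_0,\le T')}=\mathbb{ST}^3$ for some $T'>0$ and every $w_0\in\mathbb{ST}^3$, where $\mathcal{A}_{\mathcal{F}}(w_0,\le T')=\bigcup_{t\le T'}\mathcal{A}_{\mathcal{F}}(w_0,t)$, and then to pin the duration to the single value $T'$ by prepending a time‑wasting loop (described at the end); since $\{\tilde{X}_0;\tilde{X}_1,\dots,\tilde{X}_5\}$ satisfies the parabolic H\"ormander condition on $\mathbb{ST}^3$ (Section~\ref{hypoellipticity}), $\mathcal{A}_{\mathcal{F}}(w_0,t)$ has nonempty interior for $t>0$ (Proposition~5.10 of \cite{BBPS22} and Theorem~3 in Section~3.1 of \cite{J97}), so this indeed yields \eqref{preliminaryeq2} for the projective process and hence, via the Stroock--Varadhan support theorem and the argument of Theorem~13 in Section~3.4 of \cite{J97}, condition (I). Fix $w_0=(x_0,v_0)$, a target $w^{\ast}=(x^{\ast},v^{\ast})$ with $x^{\ast}=(x_1^{\ast},x_2^{\ast},x_3^{\ast})$, and a tolerance $\eta>0$; throughout, $C$ is an absolute constant, adjusted finitely often. \emph{Phase~1 (park the fiber near the equator).} By the ``moreover'' clause of Theorem~\ref{irreducibilitythm2} fix an equator point $v^{\circ}\in\{v_3=0\}$ and a $\mathcal{G}$‑integral curve $\varphi^{\ast}\colon[0,s^{\ast}]\to\mathbb{S}^2$ with $\varphi^{\ast}_0=v^{\circ}$, $s^{\ast}\le\eta$, $|\varphi^{\ast}_{s^{\ast}}-v^{\ast}|<\eta$; this is the fiber‑completion plan for Phase~3. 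By the first clause of Theorem~\ref{irreducibilitythm2} there is a $\mathcal{G}$‑curve from $v_0$ to $v^{\circ}$ of duration $\le T$, which Corollary~\ref{irreducibilitycor1} turns into an $\mathcal{F}$‑curve $\psi^{(1)}$ from $w_0$ of duration $\le T+\eta$ with fiber endpoint $v^{(1)}$ satisfying $|v^{(1)}-v^{\circ}|<\eta$ (so $|v^{(1)}_3|<\eta$); the base endpoint $x^{(1)}$ is arbitrary.

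\emph{Phase~2 (steer $(x_1,x_2)$ with the fiber nearly frozen).} Extend $\psi^{(1)}$ by the lift to $\mathbb{ST}^3$, using only $\tilde{X}_0$ and $\tilde{X}_1$ (hence an $\mathcal{F}$‑integral curve), of a base trajectory supplied by the controllability of the base process \eqref{irreducibilityeq6} (Theorem~\ref{irreducibilitythm1}) that carries the base from $x^{(1)}$ to within $\eta$ of $(x_1^{\ast},x_2^{\ast},0)$ in the fixed time $T_1$. Because the base components of $\tilde{X}_0,\tilde{X}_1$ depend on $x$ only, the lifted base trajectory is precisely this curve; and because $\tilde{X}_1$ has vanishing $v$‑component while the $v$‑component of $\tilde{X}_0$ has norm $\lesssim|v_3|$ with $|v_3|$ satisfying a linear Gr\"onwall bound over the fixed interval $[0,T_1]$, the fiber stays within $C\eta$ of $v^{\circ}$ throughout. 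Denote the resulting state $w^{(2)}=(x^{(2)},v^{(2)})$: $|(x_1^{(2)},x_2^{(2)})-(x_1^{\ast},x_2^{\ast})|<C\eta$ and $|v^{(2)}-v^{\circ}|<C\eta$.

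\emph{Phases~3--4 (complete the fiber, fix $x_3$, conclude).} Running the control sequence of $\varphi^{\ast}$ from the initial point $v^{(2)}$ gives a $\mathcal{G}$‑curve $\varphi$ with $|\varphi_{s^{\ast}}-v^{\ast}|<C\eta$ (Lipschitz dependence on the initial condition), and Corollary~\ref{irreducibilitycor1} applied to $\varphi$ from $w^{(2)}$ produces an $\mathcal{F}$‑curve $\psi^{(3)}$ from $w^{(2)}$ of duration $\le C\eta$ with fiber endpoint within $C\eta$ of $v^{\ast}$. The base coordinate $(x_1,x_2)$ drifts only $O(\eta)$ here: on the $Y_{0,k}$‑pieces the short $x_3$‑resets use $\tilde{X}_1$ (which does not touch $(x_1,x_2)$) and the drift $\tilde{X}_0$ acts for total time $O(\eta)$, while on the $Y_1,Y_2$‑pieces the $\gamma\to\infty$ acceleration in Corollary~\ref{irreducibilitycor1} makes the drift contribute $O(1/\gamma)=O(\eta)$ (the $Mt/\gamma$ term in \eqref{irreducibilityeq8}). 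Finally append a short segment of $\tilde{X}_0+\gamma'\tilde{X}_1$ with $\gamma'$ large, driving $x_3$ to $x_3^{\ast}$ in time $O(1/\gamma')$ and perturbing $v$ and $(x_1,x_2)$ by only $O(1/\gamma')$. The terminal state is then within $C\eta$ of $w^{\ast}$ in every coordinate, so any neighborhood of $w^{\ast}$ is reached by taking $\eta$ small; the total duration is $\le(T+\eta)+T_1+C\eta+C/\gamma'\le T+T_1+1+C=:T'$, uniform in $w_0,w^{\ast},\eta$. Inserting between Phases~1 and~2 a loop along the flow of $\tilde{X}_0+(c-1)\tilde{X}_1$, of the duration needed to bring the running time up to $T'$ and with $c$ a large multiple of $2\pi$ divided by that duration so that $(x_1,x_2)$ and $x_3$ return to their values (while $v$ moves $O(\eta)$ because $|v_3|=O(\eta)$ there), makes the total time exactly $T'$, giving $\overline{\mathcal{A}_{\mathcal{F}}(w_0,T')}=\mathbb{ST}^3$ for all $w_0$.

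The main obstacle is coordinating the fiber and base control problems. The only mechanism that rotates the fiber longitudinally (about the $v_3$‑axis) is the drift $\tilde{X}_0$, whose $v$‑rate is $O(|v_3|)$, so an order‑one longitudinal rotation of the fiber unavoidably drags $(x_1,x_2)$ an order‑one distance; a naive ``first the fiber, then the base'' argument therefore fails. The fix is structural: in Phases~1--2 the fiber is held near the equator --- where $\tilde{X}_0$ barely rotates it --- while $(x_1,x_2)$ is steered to its target, and in Phase~3 the remaining longitudinal motion is made of length $O(\eta)$ only, which is exactly what the ``moreover'' clause of Theorem~\ref{irreducibilitythm2} provides, the complementary latitudinal motion being carried out with the accelerated diffusion fields of Theorem~\ref{irreducibilitythm3}, which leave the base essentially fixed. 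The remaining difficulty is purely bookkeeping: securing a duration bound uniform in the initial point, the target, and all approximation parameters, and then locking the duration to the single value $T'$ --- handled by the equatorial loop.
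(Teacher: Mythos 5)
Your proposal is correct and follows essentially the same route as the paper's proof: the same three-stage coordination of the two control problems (steer the fiber to the equator using Theorem~\ref{irreducibilitythm2} and Corollary~\ref{irreducibilitycor1}, run the base control of Theorem~\ref{irreducibilitythm1} while the fiber is held near $\{v_3=0\}$ where $\tilde X_0,\tilde X_1$ barely rotate it, then complete the fiber and adjust $x_3$ with accelerated diffusion fields). The only substantive variation is that the paper lands the approximating curve \emph{exactly} on $\{v_3=0\}$ by a continuity (intermediate-value) argument so that the base-control phase does not move the fiber at all, whereas you land within $\eta$ of the equator and bound the resulting $O(\eta)$ fiber drift by Gr\"onwall; both versions work, and your explicit time-wasting loop is a reasonable way to normalize the total duration to a single $T'$.
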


\begin{proof}
Suppose we want to obtain a control to go from $w' = (x',\, v')$ to a neighborhood $U$ of $w''= (x'',\, v'')$; we may assume that $U$ is of the form $U = \left(I_1\times\dotsc\times I_6\right) \,\cap\, \mathbb{T}^3\times\mathbb{S}^2$, where $I_k$'s are open intervals for $k=1,\dotsc, 6$. By Theorem~\ref{irreducibilitythm2}, there exists an integral curve of $\mathcal{G}$ with the partition $0=t_0<t_1<\dotsc<t_m=T_1$ such that
\begin{equation*}
\phi_0 = v', \quad \phi_{T_1}\in \left(I_4\times I_5\times I_6\right)\cap \mathbb{S}^2 \quad\mbox{and}\quad \phi_{T_1-\epsilon_1} \in\{v_3=0\},
\end{equation*}
where $\epsilon_1>0$ can be arbitrarily small. By definition, 
\begin{equation*}
\dot{\phi}_t = Y_i\in\mathcal{G} \quad\mbox{for}\quad t_{i-1}<t<t_i, \quad i=1,\dotsc, m,
\end{equation*}
and we can apply Corollary~\ref{irreducibilitycor1} to get an integral curve $\varphi= (\varphi^1,\,\varphi^2,\dotsc, \varphi^6): [0,\,T_2]\rightarrow \mathbb{ST}^3$ of $\tilde{\mathcal{F}}$ such that
\begin{itemize}
\item $\left(\varphi_0^4,\,\varphi_0^5,\,\varphi_0^6\right)= v'$. 
\item $\left(\varphi_{T_2}^4,\,\varphi_{T_2}^5,\,\varphi_{T_2}^6\right)\in \left(I_4\times I_5\times I_6\right)\cap \mathbb{S}^2$. 
\item $\left(\varphi_{T_2-\epsilon_2}^4,\,\varphi_{T_2-\epsilon_2}^5,\,\varphi_{T_2-\epsilon_2}^6\right)\in\{v_3=0\}$, where $\epsilon_2$ is a small positive number depending on $\epsilon_1$.
\end{itemize}
It is possible to achieve the third bullet point, since we can pick $\phi$ so that $\phi_u\in\{v_3 > 0\}$ and $\phi_{u'}\in\{v_3 < 0\}$ for some $u < T_1-\epsilon_1 < u'$. By approximating $\phi$ well enough, $\varphi$ will have to cross $\{v_3= 0\}$ by continuity.

Now, observe that both 
\begin{equation*}
\left(\tilde{X}_0^4,\, \tilde{X}_0^5,\, \tilde{X}_0^6\right) =  \left(\tilde{X}_1^4,\, \tilde{X}_1^5,\, \tilde{X}_1^6\right) = 0
\end{equation*}
for $\left\{(x, v)\in\mathbb{T}^3\times\mathbb{S}^2: v_3=0\right\}$; hence, we can use $\tilde{X}_0$ and $\tilde{X}_1$ to control the position in the $x$-direction without changing the position in $v$-direction, once we are in $\left\{(x, v)\in\mathbb{T}^3\times\mathbb{S}^2: v_3=0\right\}$. By Theorem~\ref{irreducibilitythm1}, we construct an integral curve $\tilde{\varphi}:[0,\,T_3+ T_2 - \epsilon_2]\rightarrow \mathbb{ST}^3$ such that
\begin{itemize}
\item $\tilde{\varphi}_t = \varphi_t$ for $0\leq t\leq T_2-\epsilon_2$.
\item $\left(\tilde{\varphi}_t^4,\, \tilde{\varphi}_t^5,\, \tilde{\varphi}_t^6\right) = \left(\tilde{\varphi}_{T_2-\epsilon_2}^4,\,\tilde{\varphi}_{T_2-\epsilon_2}^5,\,\tilde{\varphi}_{T_2-\epsilon_2}^6\right)$  for $T_2-\epsilon_2\leq t\leq T_3+T_2-\epsilon_2$.
\item $\left(\tilde{\varphi}_{T_3+T_2-\epsilon_2}^1,\,\tilde{\varphi}_{T_3+T_2-\epsilon_2}^2,\,\tilde{\varphi}_{T_3+T_2-\epsilon_2}^3\right) \in I_1\times I_2\times I_3$.
\end{itemize}
To finish our proof, first observe that there exists $0 < \epsilon_3 < \epsilon_2$ and open neighborhood $\tilde{U}$ of $\tilde{\varphi}_{T_1+T_2-\epsilon_2}$ such that 
\begin{equation*}
\mathcal{A}_{\tilde{X}_0}(\tilde{U}, \leq \epsilon_3) \subset U.    
\end{equation*}
Again, we may assume $\tilde{U}$ is of the form $\tilde{U} =(\tilde{I}_1\times\dotsc\times \tilde{I}_6) \,\cap\, \mathbb{T}^3\times\mathbb{S}^2$, where $\tilde{I}_k$'s are open intervals for $k=1,\dotsc, 6$. We extend our integral curve to   $\tilde{\varphi}:[0,\,T_3+T_2]\rightarrow \mathbb{ST}^3$ in the following steps:
\begin{enumerate}
\item If necessary, use Theorem~\ref{irreducibilitythm1} to obtain a control such that \begin{equation*}
\left(\tilde{\varphi}_{T_3+T_2-\epsilon_3}^1,\,\tilde{\varphi}_{T_3+T_2-\epsilon_3}^2\right)\in \tilde{I}_1\times\tilde{I}_2
\quad\mbox{and}\quad
\tilde{\varphi}_{T_3+T_2-\epsilon_2}^k = \tilde{\varphi}_{T_3+T_2-\epsilon_3}^k \quad\mbox{for}\quad k = 4,5,6.
\end{equation*}
\item By Theorem~\ref{irreducibilitythm2} and  Theorem~\ref{irreducibilitythm3}, there exists some $k\in\{2,3,4,5\}$ and some $\gamma\gg 1$ such that  
\begin{equation*}
\tilde{\varphi}_{T_3+T_2-\epsilon_3+\gamma^{-1}} = \tilde{\varphi}_{T_3+T_2-\epsilon_3} + \int_{T_3+T_2-\epsilon_3}^{T_3+T_2-\epsilon_3+1/\gamma} \tilde{X}_0(\tilde{\varphi}_s) \pm \gamma\tilde{X}_k(\tilde{\varphi}_s)\,ds,
\end{equation*}
$T_3+T_2-\epsilon_3 + \gamma^{-1} < T_3+T_2$, and 
\begin{equation*}
(\tilde{\varphi}_{T_3+T_2-\epsilon_3+\gamma^{-1}}^4,\,\tilde{\varphi}_{T_3+T_2-\epsilon_3+\gamma^{-1}}^5,\,\tilde{\varphi}_{T_3+T_2-\epsilon_3+\gamma^{-1}}^6) \in \tilde{I}_4\times \tilde{I}_5\times \tilde{I}_6;
\end{equation*}
the sign in front of $\tilde{X}_k$ depends on the direction we are moving.

\item Pick some $\tilde{\gamma}\gg 1$
such that
\begin{equation*}
\tilde{\varphi}_{T_3+T_2-\epsilon_3+\gamma^{-1} +  \tilde{\gamma}^{-1}} = \tilde{\varphi}_{T_3+T_2-\epsilon_3 + \gamma^{-1}} + \int_{T_3+T_2-\epsilon_3+1/\gamma}^{T_3+T_2-\epsilon_3+1/\gamma+ 1/\tilde{\gamma}} \tilde{X}_0(\tilde{\varphi}_s) + \tilde{\gamma}\tilde{X}_1(\tilde{\varphi}_s)\,ds,
\end{equation*}
$T_3+T_2-\epsilon_3 + \gamma^{-1} + \tilde{\gamma}^{-1} < T_3+T_2$ and $\tilde{\varphi}_{T_3+T_2-\epsilon_3+\gamma^{-1}+\tilde{\gamma}^{-1}} \in \tilde{U}$.

\item Finally, consider
\begin{equation*}
\tilde{\varphi}_{T_3+T_2} = \tilde{\varphi}_{T_3+T_2-\epsilon_3 + \gamma^{-1} + \tilde{\gamma}^{-1}} + \int_{T_3+T_2-\epsilon_3+1/\gamma + 1/\tilde{\gamma}}^{T_3+T_2} \tilde{X}_0(\tilde{\varphi}_s)\,ds;
\end{equation*}
due to the way we pick $\epsilon_3$, we have $\tilde{\varphi}_{T_3+T_2} \in U$.
\end{enumerate} 
\end{proof}

We break the proof of Theorem~\ref{irreducibilitythm2} into several lemmas. First, we observe that
\begin{equation*}
    N_0 = \{v_3=0\}, \quad
    N_1 = \{ v_1=0\} \quad\mbox{and}\quad
    N_2 = \{ v_2=0\}
\end{equation*}
are the sets where the vector fields $Y_{0,\cdot}$, $Y_1$ and $Y_2$ vanish respectively. 

\begin{lemma}\label{irreducibilitylemma1}
The non-trivial orbits of $Y_1$ and $Y_2$ cover $\mathbb{S}^2\setminus\{ v_3=\pm 1\}$, 
\begin{equation*}
\mathbb{S}^2\setminus\{v_3= \pm 1\} = \bigcup_{k=1}^2\bigcup_{ \left\{(v,t)\in \mathbb{S}^2\times \mathbb{R}: Y_k(v) \neq 0\right\} } e^{tY_k}(v), 
\end{equation*}
and every non-trivial orbit of $Y_1$ and $Y_2$ intersects $N_0$,
\begin{equation*}
\bigcup_{ t\in\mathbb{R} }  e^{tY_k}(v) \cap N_0 \neq\emptyset, \quad
v\in \left\{v\in\mathbb{S}^2: Y_k(v)\neq 0\right\}, \,k=1,2.
\end{equation*}
\end{lemma}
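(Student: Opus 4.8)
The plan is to exploit the factorization $Y_1 = v_1\,W$ and $Y_2 = v_2\,W$, where $W(v) = \big(-v_3v_1,\ -v_3v_2,\ 1-(v_3)^2\big)$, and to identify the orbits of $W$ — hence of $Y_1$ and $Y_2$, which differ from $W$ only by the scalar factors $v_1$, $v_2$ — with the open meridian arcs of $\mathbb{S}^2$ joining the poles $v_3 = \pm 1$. First I would record that $W$ vanishes precisely on $\{v_3 = \pm 1\}$, so that $Y_1$ vanishes exactly on $N_1 = \{v_1 = 0\}$ and $Y_2$ exactly on $N_2 = \{v_2 = 0\}$ (using that $v_1 = v_2 = 0$ at a pole). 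Parametrizing $\mathbb{S}^2\setminus\{v_3 = \pm 1\}$ by longitude $\phi$ and height $v_3\in(-1,1)$ via $v_1 = \sqrt{1-(v_3)^2}\cos\phi$, $v_2 = \sqrt{1-(v_3)^2}\sin\phi$, a direct computation gives $W = (1-(v_3)^2)\,\partial_{v_3}$ in this chart (with $\phi$ frozen); equivalently, along any integral curve of $Y_k$ one checks $\tfrac{d}{dt}(v_2/v_1)=0$ (resp. $\tfrac{d}{dt}(v_1/v_2)=0$) and $\dot v_3 = v_k\,(1-(v_3)^2)$. Thus every orbit of $Y_k$ remains on a single meridian, and $v_3$ evolves monotonically along it.

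For the covering statement, take $v\in\mathbb{S}^2$ with $v_3\neq\pm 1$. If $v_1\neq 0$ then $Y_1(v)\neq 0$ and $v$ lies on a nontrivial orbit of $Y_1$; otherwise $v_1 = 0$, whence $(v_2)^2 = 1-(v_3)^2 > 0$, so $Y_2(v)\neq 0$ and $v$ lies on a nontrivial orbit of $Y_2$. The reverse inclusion is immediate: the poles are equilibria of both $Y_1$ and $Y_2$, so by uniqueness of solutions a flow line issuing from a non-polar point stays in $\mathbb{S}^2\setminus\{v_3 = \pm1\}$ for all time. This yields the displayed identity.

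For the intersection with $N_0 = \{v_3 = 0\}$, consider a nontrivial orbit of $Y_1$, i.e. one passing through some $v$ with $v_1\neq 0$; writing $v_1 = \sqrt{1-(v_3)^2}\cos\phi_0$, nontriviality forces $\cos\phi_0\neq 0$, and then $v_1$ keeps the sign of $\cos\phi_0$ all along the meridian $\{\phi=\phi_0\}$. Hence $Y_1$ is nowhere vanishing on and tangent to this open meridian arc; since $\mathbb{S}^2$ is compact the maximal integral curve is complete, with $\alpha$- and $\omega$-limit sets equal to the two poles. Because $\dot v_3 = v_1(1-(v_3)^2)$ has constant sign and is bounded away from $0$ in modulus on every compact subinterval of $v_3\in(-1,1)$, the curve sweeps all heights $v_3\in(-1,1)$, in particular crossing $\{v_3 = 0\}$. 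The argument for $Y_2$ is identical after exchanging the roles of $v_1,v_2$ and of $\cos\phi_0,\sin\phi_0$.

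The only mildly delicate point is this last one: ruling out that an orbit could stall before reaching the equator. This is handled by the monotonicity and the non-degeneracy of $\dot v_3$ on compact $v_3$-subintervals (equivalently, by identifying the limit sets of the integral curve with the poles). Everything else is routine bookkeeping with the explicit formulas for $Y_1$ and $Y_2$.
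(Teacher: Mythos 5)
Your proof is correct, and it follows the same essential strategy as the paper: orbits of $Y_k$ are confined to meridians, $v_3$ is strictly monotone along a non-trivial orbit, and the orbit must therefore sweep through the equator. The differences are in presentation and in the justification of the one delicate step. The paper works with the projected planar system $\dot v_1=-v_3(v_1)^2$, $\dot v_3=v_1(1-(v_3)^2)$ and rules out stalling via a convexity argument, computing $\ddot v_3=-3(v_1)^2v_3(1-(v_3)^2)$ to show $\dot v_3$ is nondecreasing while $v_3<0$, so $v_3$ grows at least linearly until it reaches $0$. You instead make the meridian structure explicit through the factorization $Y_k=v_k W$ with $W=(1-(v_3)^2)\partial_{v_3}$ in longitude--height coordinates (equivalently $\tfrac{d}{dt}(v_2/v_1)=0$), which lets you write $v_1=\sqrt{1-(v_3)^2}\cos\phi_0$ along the orbit and conclude that $\dot v_3=\cos\phi_0\,(1-(v_3)^2)^{3/2}$ is bounded away from zero on compact subintervals of $v_3\in(-1,1)$; combined with completeness this gives the crossing. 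Your route is arguably cleaner and more self-contained on the non-stalling point, and it also spells out the covering identity and the reverse inclusion (poles are equilibria, so non-trivial orbits avoid them by uniqueness), which the paper leaves implicit; the paper's phase-plane computation, on the other hand, is what feeds directly into the quantitative estimate \eqref{hit} used for Lemma~\ref{irreducibilitylemma2}, so the two presentations serve slightly different downstream purposes.
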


\begin{proof}
By symmetry, we may focus on studying the orbits of $Y_1$. The orbits of $Y_1$ are partitioned into three parts, 
\begin{equation*}
N_{1,-} = \{ v_1<0\}, \quad  N_1 = \{ v_1=0\} \quad\mbox{and}\quad N_{1,+} = \{ v_1>0\}.
\end{equation*}
The orbits in $ N_1$ are trivial, and the orbits in $N_{1,-}$ and $N_{1,+}$ are symmetric; therefore, we can focus on analyzing $  N_{1,+}$. In particular, we study the projected orbits to the $( v_1, v_3)$-plane,
\begin{align}
    \frac{d v_1}{dt} &= - v_3( v_1)^2 \\[3pt]
    \frac{d v_3}{dt} &=  v_1\left(1-( v_3)^2\right).
\end{align}
We include a picture of the vector field in Figure~\ref{irreducibilitypic2} for reference. 

\begin{figure}
  \centering
  \includegraphics[width=.8\linewidth]{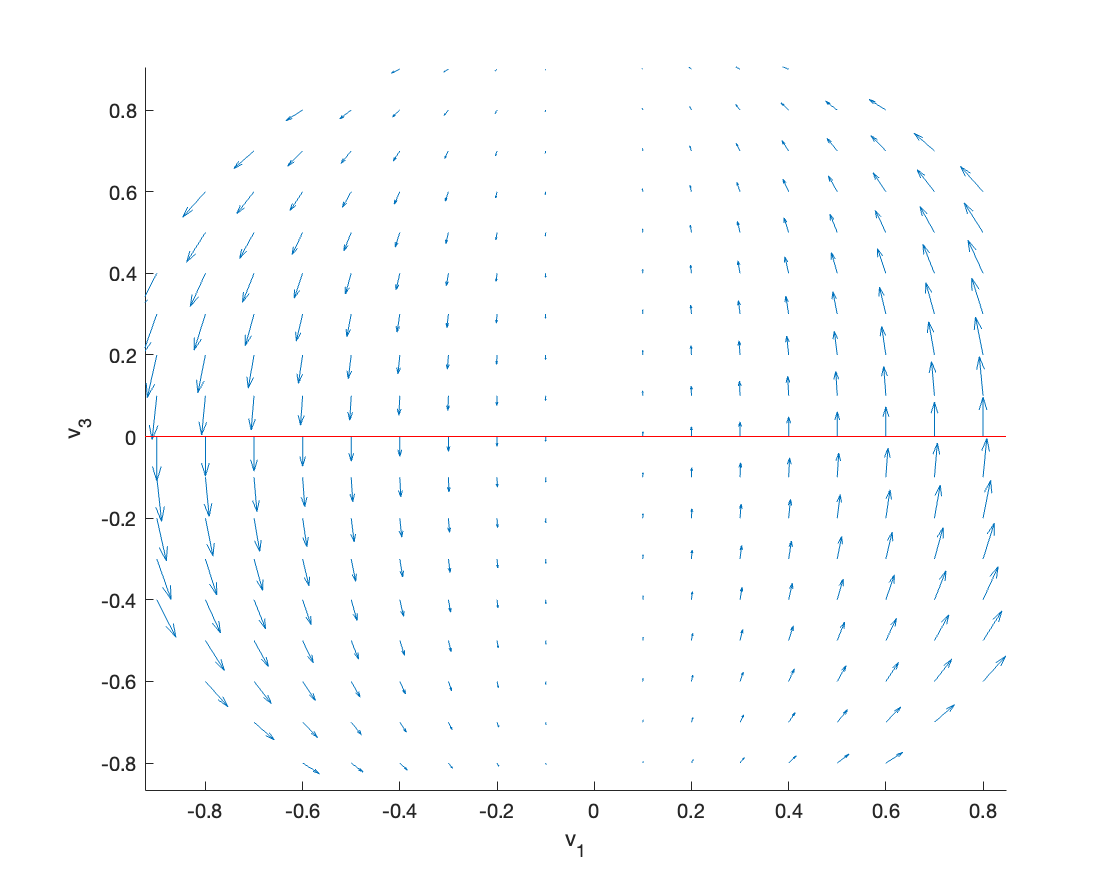}
  \caption{A picture for the vector field $(-v_3(v_1)^2,\, v_1(1-(v_3))^2)$.}
  \label{irreducibilitypic2}
\end{figure}

To show that every orbit in $ N_1,+$ must pass through the equator $N_0$. Indeed,
\begin{align*}
    \frac{d^2 v_3}{dt^2} &= - v_3( v_1)^2(1-( v_3)^2) - 2( v_1)^2 v_3(1-( v_3)^2) \\[3pt]
    &= -3( v_1)^2 v_3(1-( v_3)^2),
\end{align*}
which is positive when $ v_3<0$, zero when $ v_3=0$ and negative when $ v_3>0$; this suggests the system is accelerating upward in the lower half, and accelerating downward in the upper half. This and the fact that
\begin{equation*}
\frac{d v_3}{dt} =  v_1(1-( v_3)^2) > 0 \quad\mbox{in}\quad N_{1,+} 
\end{equation*}
imply every orbit in $N_{1,+}$ passes through $N_0$. 
\end{proof}

Another thing we need is an estimate of how far right one can be when we reach $N_0$, if we start the system from, say, $(v_1, -\epsilon)\in   N_{1,+}$. Let $t$ be the time at which the system hits $N_0$. Then
\begin{align*}
    0 =  v_3(t) &= -\epsilon + \int_0^t  v_1(s)(1-( v_3(s))^2)\,ds \\[3pt]
    &\leq -\epsilon + \int_0^t  v_1(0)(1-\epsilon^2)\,ds = -\epsilon +  v_1(0)(1-\epsilon^2)t,
\end{align*}
where we used the fact that the system is accelerating upward in the lower part of $  N_{1,+}$. This gives an upper bound for $t$,
\begin{equation*}
t \leq \frac{\epsilon}{ v_1(0)(1-\epsilon^2)}.
\end{equation*}
For $ v_1(t)$, we have
\begin{align*}
     v_1(t) &=  v_1(0) + \int_0^t - v_3(s)( v_1(s))^2\,ds \\[3pt]
    &\leq  v_1(0) + \int_0^t \epsilon( v_1(s))^2\,ds \\[3pt]
    &\leq  v_1(0) + \epsilon t \leq  v_1(0) + \frac{\epsilon^2}{ v_1(0)(1-\epsilon^2)}.
\end{align*}
In particular, if we take $( v_1(0), v_3(0)) = (\epsilon,-\epsilon)$, then
\begin{equation}\label{hit}
     v_1(t) \leq \epsilon + \frac{\epsilon^2}{\epsilon(1-\epsilon^2)} < 3\epsilon,
\end{equation}
where the last inequality is true if we take for example $\epsilon^2<1/2$. Estimate (\ref{hit}) gives the following lemma.

\begin{lemma}\label{irreducibilitylemma2}
Let $\mathcal{G}'=\mbox{span}\{Y_1, Y_2\}$. For all $t>0$ and $\epsilon < 1/9$,
\begin{equation*}
N_0 \subset \mathcal{A}_{\mathcal{G}'}(\{ v_3=\pm\epsilon\},\,\leq t).
\end{equation*}
\end{lemma}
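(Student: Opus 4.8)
The plan is to prove, for fixed $t>0$ and $\epsilon<1/9$, that every $p\in N_0$ lies in $\mathcal{A}_{\mathcal{G}'}(\{v_3=\pm\epsilon\},\leq t)$, witnessed by a single integral curve of a rescaled field $\gamma Y_1$ or $\gamma Y_2$. The first reduction is to time‑rescaling: since $\mathcal{G}'=\operatorname{span}\{Y_1,Y_2\}$, the field $\gamma Y_k$ belongs to $\mathcal{G}'$ for every $\gamma\in\mathbb{R}$, and rescaling converts an orbit arc of $Y_k$ of duration $T_0$ into an integral curve of $\mathcal{G}'$ of duration $T_0/\gamma$. Hence it suffices to join each $p\in N_0$ to some point of $\{v_3=\pm\epsilon\}$ by a \emph{finite}-duration integral curve of $Y_1$ or of $Y_2$; taking $\gamma=\max\{1,T_0/t\}$ then forces the duration below $t$.

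So fix $p=(v_1^\ast,v_2^\ast,0)\in N_0$. Since $(v_1^\ast)^2+(v_2^\ast)^2=1$, at least one of $Y_1(p),Y_2(p)$ is nonzero; suppose $v_1^\ast\neq0$, say $v_1^\ast>0$ (the case $v_1^\ast<0$ is symmetric, and the two exceptional points with $v_1^\ast=0$ are treated identically with $Y_2$ in place of $Y_1$, since there $Y_2(p)=(0,0,\pm1)\neq0$). By the phase‑portrait analysis in the proof of Lemma~\ref{irreducibilitylemma1}, $N_{1,+}=\{v_1>0\}$ is invariant under the flow of $Y_1$ and there $\dot v_3=v_1(1-v_3^2)>0$ whenever $|v_3|<1$; moreover one checks directly that $v_2/\sqrt{1-v_3^2}$ is constant along orbits of $Y_1$. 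Consequently the $Y_1$‑orbit through $p$ stays in $N_{1,+}$, run backward it has $v_3$ strictly decreasing, and along it $v_1=v_1^\ast\sqrt{1-v_3^2}\geq v_1^\ast\sqrt{1-\epsilon^2}>0$ on $\{-\epsilon\le v_3\le0\}$, so $\dot v_3$ is bounded below there; hence the orbit meets $\{v_3=-\epsilon\}$ at the point $q=(v_1^\ast\sqrt{1-\epsilon^2},\,v_2^\ast\sqrt{1-\epsilon^2},\,-\epsilon)$ after finite time $T_0>0$ — this is exactly the situation of~\eqref{hit}, whose derivation also records the explicit bound $T_0\le\epsilon/\bigl(v_1^\ast(1-\epsilon^2)^{3/2}\bigr)$. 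Then the forward integral curve of $\gamma Y_1$ from $q$, with $\gamma=\max\{1,T_0/t\}$, is an integral curve of $\mathcal{G}'$ of duration $T_0/\gamma\le t$ ending at $p$, so $p\in\mathcal{A}_{\mathcal{G}'}(\{v_3=\pm\epsilon\},\le t)$. As $p$ was arbitrary this gives $N_0\subseteq\mathcal{A}_{\mathcal{G}'}(\{v_3=\pm\epsilon\},\le t)$.

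The only subtle point is the finite‑time reachability of $\{v_3=-\epsilon\}$ along the backward $Y_1$‑orbit: a priori $v_3$ might only approach $-\epsilon$ asymptotically, the orbit limiting onto an equilibrium of $Y_1$ on $N_1=\{v_1=0\}$. The conserved quantity $v_2/\sqrt{1-v_3^2}$ — equivalently $v_1=v_1^\ast\sqrt{1-v_3^2}$ along the orbit — rules this out, and is also what pins down the landing point $q$; estimate~\eqref{hit} is the quantitative form of this fact, and the restriction $\epsilon<1/9$ is simply inherited from it (for the present statement any $\epsilon\in(0,1)$ would do). The remainder is bookkeeping over the symmetric cases $v_1^\ast\lessgtr0$ and the two polar points, handled with $Y_2$ in place of $Y_1$.
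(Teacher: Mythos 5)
Your proof is correct and follows essentially the same route as the paper: the phase-portrait analysis of the $Y_1,Y_2$ orbits crossing $N_0$ transversally, a finite hitting-time bound between $\{v_3=\pm\epsilon\}$ and $N_0$, and time-rescaling within $\operatorname{span}\{Y_1,Y_2\}$. Your use of the conserved quantity $v_j/\sqrt{1-v_3^2}$ and of backward orbits from an arbitrary target $p\in N_0$ makes explicit the surjectivity onto $N_0$ (and the exact landing point $q$), which the paper leaves implicit in its forward estimate \eqref{hit}; you are also right that the restriction $\epsilon<1/9$ is not needed for this particular statement.
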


Now we are ready to prove theorem~\ref{irreducibilitythm1}. The main idea is to use $Y_{0,\cdot}$ to go across orbits of $  Y_1$ and $  Y_2$ in a subset of $S^2$, then use $Y_1$ and $Y_2$ to move instantaneously within their orbits. 

\begin{proof}
Let $S_{\epsilon, \pm}$ be the stripes where $\epsilon$ will be determined later,
\begin{equation*}
S_{\epsilon, +} = \mathbb{S}^2\cap \{\epsilon \leq  v_3 \leq 1-\epsilon\} \quad\mbox{and}\quad S_{\epsilon, -} = \mathbb{S}^2\cap \{-1+\epsilon \leq  v_3 \leq -\epsilon\}.
\end{equation*}

First, we show that there exists $t_1>0$ such that
\begin{equation}\label{irreducibilityeq1}
S_{\epsilon,+} \subseteq \mathcal{A}_{\mathcal{G}}\left(v, \,\leq t_1\right) \quad\forall v\in S_{\epsilon,+} \quad\mbox{and}\quad S_{\epsilon,-} \subseteq \mathcal{A}_{\mathcal{G}}\left(v, \,\leq t_1\right) \quad\forall v\in S_{\epsilon,-}.
\end{equation}
By symmetry, we focus on studying $S_{\epsilon,+}$. A few properties of the collection $\mathcal{G}$:
\begin{itemize}
\item $Y_{0,1} = -Y_{0,3}$ and $Y_{0,2} = -Y_{0,4}$
\item For all $v\in S_{\epsilon, +}$, span$\{Y_{0,k},\, Y_l\} = T_v\mathbb{S}^2$ for some $(k,l)\in \{(1,1), (1,2), (2,1), (2,2)\}$.
\end{itemize}
Given $v\in S_{\epsilon,+}$, suppose span$\{Y_{0,k},\, Y_l\}=T_v\mathbb{S}^2$. For some neighborhood $U$ of $0$ in $\mathbb{R}^2$, we may consider $g: U\rightarrow S_{\epsilon,+}$, defined as
\begin{equation*}
g(t_1,t_2) = e^{t_2Y_l}\circ e^{t_1Y_{0,k}}(v), 
\end{equation*}
where the exponential denote the flow generated by the vector field. One can see that $g(0,0)=v$, and by letting
\begin{align*}
\varphi_t(v) &= v + \int_0^t Y_{0,k}(\varphi_s(v))\,ds \\[3pt]
\psi_t(v') &= v' + \int_0^t Y_l(\psi_s(v'))\,ds, 
\end{align*}
we have
\begin{equation*}
g(t_1,t_2) = \varphi_{t_1}(v) + \int_0^{t_2}Y_l(\psi_s(\varphi_{t_1}(v)))\,ds.
\end{equation*}
In particular, 
\begin{align*}
\partial_{t_1}g(0,0) &= \left. \dot{\varphi_{t_1}}(v) + \int_0^{t_2} \nabla \left( Y_l\circ\varphi_s\right) \cdot\dot{\varphi_{t_1}}(v)\,ds \right\vert_{(t_1,t_2)=(0,0)} = Y_{0,k}(v) \\
\partial_{t_2}g(0,0) &= Y_l(v),
\end{align*}
and the two vectors are linearly independent; hence, by inverse function theorem, there exists a neighborhood $\tilde{U}$ of $0$ such that $\tilde{U}\subseteq U$ and $g: \tilde{U}\rightarrow g(\tilde{U})$ is a diffeomorphism. Define 
\begin{equation*}
t(v) = 2 \max\{\vert t_1\vert, \vert t_2\vert\}_{(t_1,t_2)\in \tilde{U}}.
\end{equation*}
Since $Y_{0,k}$ and $-Y_{0,k}$ are both in $\mathcal{G}$, the above discussion implies that 
\begin{equation*}
g(\tilde{U}) \subseteq \mathcal{A}_{\mathcal{G}}\left(v, \leq t(v)\right).
\end{equation*}
For each $v\in S_{\epsilon,+}$, there exists a neighborhood of $0$, $U_v\subseteq\mathbb{R}^2$, and a diffeomorphism $g_v: U_v\rightarrow g_v(U_v)$ such that $v=g(0,0)$. Since $S_{\epsilon, +}$ is compact, we may cover it with finitely many open neighborhood $\{g_{v_i}(U_{v_i})\}_{i=1}^N$, and this shows
\begin{equation*}
S_{\epsilon,+} \subseteq \mathcal{A}_{\mathcal{G}}\left(v, \leq \sum_{i=1}^Nt(v_i)\right) \quad\forall v\in S_{\epsilon,+}.
\end{equation*}

Denote 
\begin{equation*}
S_+ = \{v\in \mathbb{S}^2: v_3\geq 0\} \quad\mbox{and}\quad S_- = \{v\in \mathbb{S}^2: v_3\leq 0\}.
\end{equation*}
Next, we show that by picking $\epsilon\ll 1$ there exists $t_2>0$ such that
\begin{equation}\label{irreducibilityeq2}
S_{\epsilon,+} \cap \mathcal{A}_{\mathcal{G}}\left(v, \,\leq t_2\right)\neq\emptyset \quad\forall v\in S_+  \quad\mbox{and}\quad S_{\epsilon,-} \cap \mathcal{A}_{\mathcal{G}}\left(v, \,\leq t_2\right)\neq\emptyset \quad\forall v\in S_-.
\end{equation}
By symmetry, we focus on studying $S_+$. We observe that if $1-\epsilon < v_3 < 1$, then by Lemma~\ref{irreducibilitylemma1}, for any $\delta>0$
\begin{equation*}
N_0 \cap \mathcal{A}_{\mathcal{G}}\left(v, \,\leq \delta\right) \neq \emptyset \quad\forall v\in\{v\in S_+: 1-\epsilon < v_3 < 1\};
\end{equation*}
moreover, since the integral curves are continuous, it also implies
\begin{equation*}
S_{\epsilon,+} \cap \mathcal{A}_{\mathcal{G}}\left(v, \,\leq \delta\right) \neq \emptyset \quad\forall v\in\{v\in S_+: 1-\epsilon < v_3 < 1\}.
\end{equation*}
If $0\leq v_3 < \epsilon$, we may without loss of generality assume $v_3=0$ due to Lemma~\ref{irreducibilitylemma1}. Then by picking $\epsilon<1/9$, we may apply Lemma~\ref{irreducibilitylemma2} to conclude for all $\delta>0$
\begin{equation*}
S_{\epsilon,+} \cap \mathcal{A}_{\mathcal{G}}\left(v, \,\leq \delta\right) \neq \emptyset \quad\forall v\in\{v\in S_+: 0 \leq v_3 < \epsilon\}.
\end{equation*}
If $v_3 = 1$, since $Y_{0,1}\neq 0$ at $(0,0,1)\in S_+$, there exists some $t_2'>0$ such that
\begin{equation*}
v' = e^{t_2'Y_{0,1}}(0,0,1) \neq (0,0,1),
\end{equation*}
and the previous two cases apply to $v'$. In conclusion, we may take $t_2 = t_2'+\delta$ for arbitrary $\delta>0$, and $\epsilon < 1/9$ so that  
\begin{equation*}
S_{\epsilon,+} \cap \mathcal{A}_{\mathcal{G}}\left(v, \,\leq t_2\right)\neq\emptyset \quad\forall v\in S_+.
\end{equation*}

Finally, by combining (\ref{irreducibilityeq1}) and (\ref{irreducibilityeq2}), we obtain
\begin{equation}\label{irreducibilityeq3}
S_{\epsilon,+} \subseteq \mathcal{A}_{\mathcal{G}}\left(v, \,\leq t_1+ t_2\right) \quad\forall v\in S_+ \quad\mbox{and}\quad S_{\epsilon,-} \subseteq \mathcal{A}_{\mathcal{G}}\left(v, \,\leq t_1+t_2\right) \quad\forall v\in S_-.
\end{equation}
By Lemma~\ref{irreducibilitylemma2}, (\ref{irreducibilityeq3}) further implies for all $\delta>0$,
\begin{equation}\label{irreducibilityeq4}
N_0 \subseteq \mathcal{A}_{\mathcal{G}}\left(v, \,\leq t_1+ t_2 + \delta\right) \quad\forall v\in \mathbb{S}^2.
\end{equation}
By Lemma~\ref{irreducibilitylemma1}, (\ref{irreducibilityeq4}) implies $\mathcal{A}_{\mathcal{G}}\left(v, \,\leq t_1+ t_2 + \delta\right)$ is dense in $\mathbb{S}^2$ for all $v$ in $\mathbb{S}^2$. 
\end{proof}

\appendix
\section{Optimality for the decay bound in Theorem~\ref{maintheorem}}\label{sec:AppendixA}

In the following examples, we investigate the sharpness of the decay bound obtained in Theorem~\ref{maintheorem}. The first example shows that the linear decay obtained in Theorem~\ref{maintheorem} is optimal in general. The second example shows that there are cases with sublinear decay. We consider  diffusion processes on $M=\mathbb{T}^2$ with Euclidean metric. 

\begin{example}\label{example1}
Let $M=\mathbb{T}^2$. Consider the process
\begin{equation*}
dx_t^{\epsilon} = \sqrt{\epsilon}\sum_{k=1}^4 X_k(x_t^{\epsilon})\circ dW_k,
\end{equation*}
where 
\begin{equation*}
X_1 = \begin{pmatrix}
1 \\ 0
\end{pmatrix}, \quad 
X_2 = \begin{pmatrix}
0 \\ 1
\end{pmatrix}, \quad
X_3 = \begin{pmatrix}
\sin{x_2} \\ 0
\end{pmatrix} \quad\mbox{and}\quad
X_4 = \begin{pmatrix}
0 \\\sin{x_1} 
\end{pmatrix}.
\end{equation*}
A straightforward calculation shows it is associated with the projective process
\begin{equation*}
dw_t^{\epsilon} = \sqrt{\epsilon}\sum_{k=1}^4 \tilde{X}_k(w_t^{\epsilon})\circ dW_k,
\end{equation*}
where
\begin{equation*}
\tilde{X}_1 = \begin{pmatrix}
1 \\ 0 \\ 0 \\ 0
\end{pmatrix}, \quad
\tilde{X}_2 = \begin{pmatrix}
0 \\ 1 \\ 0 \\ 0
\end{pmatrix}, \quad
\tilde{X}_3 = \begin{pmatrix}
\sin{x_2} \\ 0 \\ (1-v_1^2)v_2\cos{x_2} \\ -v_1v_2^2\cos{x_2}
\end{pmatrix} \quad\mbox{and}\quad
\tilde{X}_4 = \begin{pmatrix}
0 \\ \sin{x_1} \\ -v_1^2v_2\cos{x_1} \\ v_1(1-v_2^2)\cos{x_1}
\end{pmatrix}
\end{equation*}
and we have $\mbox{div}\tilde{X}_1 = \mbox{div}\tilde{X}_2 = 0$, $\mbox{div}\tilde{X}_3 = -4v_1v_2\cos{x_2}$ and $\mbox{div}\tilde{X}_4 = -4v_1v_2\cos{x_1}$. One can check that the processes are hypoelliptic and irreducible. 

First, we show that $\lambda_1^{\epsilon}$ decays exactly linearly as $\epsilon\rightarrow 0$, and therefore our bound cannot be improved in general. Indeed, 
\begin{equation*}
0 = \tilde{\mathcal{L}}^* f^{\epsilon} = \epsilon\sum_k \left(\tilde{X}_k^* \right)^2 f^{\epsilon} = \epsilon\tilde{\mathcal{L}}_1^* f^{\epsilon} \quad\Rightarrow\quad
f^{\epsilon} = f^1,
\end{equation*}
where we used the unique ergodicity. This implies
\begin{equation*}
d\lambda_1^{\epsilon} = \frac{\epsilon}{2}\sum_{k=1}^r \int \frac{\vert\tilde{X}_k^*f^{\epsilon}\vert^2}{f^{\epsilon}}\,dw = \frac{\epsilon}{2}\sum_{k=1}^r \int \frac{\vert\tilde{X}_k^*f^{1}\vert^2}{f^{1}}\,dw. 
\end{equation*}

Next, we show that $\lambda_1^{\epsilon} > 0$ for $\epsilon > 0$. Denote $g^{\epsilon} = \log{f^{\epsilon}}$. By contradiction, we assume $\lambda_1^{\epsilon} = 0$ and therefore by (\ref{preliminaryeq1}),
\begin{equation}\label{exampleeq1}
\tilde{X}_k^* f = 0 \quad\mbox{for}\quad k=1,\dotsc, 4 \quad\Leftrightarrow\quad -\tilde{X}_k g = \mbox{div}\tilde{X}_k \quad\mbox{for}\quad k=1,\dotsc, 4,
\end{equation}
where $f$ is the stationary density for $w_t^{\epsilon}$. Using (\ref{exampleeq1}) for $k=1, 2$, we have $g$ is independent of $x_1$ and $x_2$, and therefore
\begin{equation}\label{exampleeq2}
\nabla g = \begin{pmatrix}
0 \\ 0 \\ \nabla_v g
\end{pmatrix} \quad\mbox{and}\quad \partial_{x_k}\nabla g = \nabla\partial_{x_k}g = 0 \quad\mbox{for}\quad k=1,2.
\end{equation}
Now, consider (\ref{exampleeq1}) for $k=3$, we have 
\begin{equation*}
-\begin{pmatrix}
(1-v_1^2)v_2\cos{x_2} & -v_1v_2^2\cos{x_2}
\end{pmatrix}\cdot \nabla_vg = \mbox{div}\tilde{X}_3 = 0 \quad\mbox{for}\quad (x,v) =  \left(0,\,\frac{\pi}{2},\, \frac{\sqrt{2}}{2},\,\frac{\sqrt{2}}{2}\right);
\end{equation*}
this in particular implies $\nabla_v g(0,\pi/2,\sqrt{2}/2,\sqrt{2}/2) = c(v_1, \,v_2)^t$ for some constant $c$. However, this is absurd, since (\ref{exampleeq2}) would imply
\begin{equation*}
\nabla_v g = c\begin{pmatrix}
v_1 \\ v_2
\end{pmatrix} \quad\mbox{for}\quad (x, v) = \left(0,\,x_2,\, \frac{\sqrt{2}}{2},\,\frac{\sqrt{2}}{2}\right),
\end{equation*}
where $x_2$ can be arbitrary. We may consider for instance $x_2=0$ to see (\ref{exampleeq1}) can not hold, since
\begin{equation*}
\operatorname{div}\tilde{X}_3 = -2,
\end{equation*}
which is not equal to 
\begin{equation*}
-\tilde{X}_3 g = - \frac{\sqrt{2}}{4} \begin{pmatrix}
1 & -1
\end{pmatrix}\cdot \frac{\sqrt{2}}{2} c \begin{pmatrix}
1 \\[3pt] 1
\end{pmatrix} = 0.
\end{equation*}
This is a contradction.
\end{example}

\begin{example}\label{example2}
This example is meant to explain why it is impossible to obtain an upper bound for
\begin{equation*}
\liminf_{\epsilon\rightarrow 0}\frac{\lambda_1^{\epsilon}}{\epsilon}.
\end{equation*}
Let $M=\mathbb{T}^2$. Consider the process
\begin{equation*}
dx_t^{\epsilon} = X_0(x_t^{\epsilon})\,dt + \sqrt{\epsilon}\sum_{k=1}^4 X_k(x_t^{\epsilon})\circ dW_k,
\end{equation*}
where $X_k$ are the same as in Example~\ref{example1}, and $X_0=X_3$; the process is again elliptic and irreducible. By the results in \cite{BBPS22}, the shearing in the deterministic dynamics implies 
\begin{equation*}
\liminf_{\epsilon\rightarrow 0}\frac{\lambda_1^{\epsilon}}{\epsilon} = \infty.
\end{equation*}

\end{example}

%\section{Basic geometric preliminaries and compactness criterion}
%\input{BasicsGeo.tex}

%\section{Qualitative properties of the projective stationary measure}\label{sec:Basics}
%\input{Basics.tex}
%\section{Properties of projective lifts and the Furstenberg-Khasminskii formula}\label{sec:Basics2}
%\input{Basics2.tex}

%% Bibliography
\phantomsection
\addcontentsline{toc}{section}{References}
\bibliographystyle{abbrv}
\bibliography{bibliography}

\end{document}